\newtheorem{theorem}{Theorem}[section]
\newtheorem{lemma}[theorem]{Lemma}
\newtheorem{proposition}[theorem]{Proposition}
\newtheorem{corollary}[theorem]{Corollary}
\newtheorem*{theorem*}{Theorem}
\newcommand{\End}{\mathrm{End}}
\newcommand{\Ext}{\mathrm{Ext}}
\newcommand{\Hom}{\mathrm{Hom}}
\newcommand{\Mat}{\mathrm{Mat}}
\newcommand{\Hilb}{\mathrm{Hilb}}
\newcommand{\Hilban}{\mathrm{Hilb}^n _A}
\newcommand{\Hilbang}{\mathrm{Hilb}^n _{A_g}}
\newcommand{\Hilbaa}{\mathrm{Hilb}^\alpha _A}
\newcommand{\amod}{{\text{\sf -mod}}}
\newcommand{\GL}{\mathrm{GL}}
\newcommand{\NN}{\mathbb{N}}
\newcommand{\CC}{\mathbb{C}}
\newcommand{\A}{\mathcal{A}}
\newcommand{\C}{\mathcal{C}}
\newcommand{\nn}{\mathcal{N}}
\newcommand{\cc}{\mathcal{C}}
\newcommand{\ran}{\mathrm{Rep}_A^n}
\newcommand{\raa}{\mathrm{Rep}_A^\alpha}
\newcommand{\uan}{\mathrm{U}_A^n}
\newcommand{\uaa}{\mathrm{U}_A^\alpha}
\newcommand{\op}{\operatorname{op}}
\newcommand{\Set}{{\mathsf{Set}}}
\newcommand{\n}{\noindent}
\theoremstyle{definition}
\newtheorem{definition}[theorem]{Definition}
\newtheorem{example}[theorem]{Example}
\newtheorem{examples}[theorem]{Examples}
\theoremstyle{remark}
\newtheorem{remark}[theorem]{Remark}
\numberwithin{equation}{section}
 \newcommand{\rank}{\mathrm{rank}}
\renewcommand{\k}{\ensuremath{\ell}}
\begin{document}
\title{The Nori-Hilbert scheme is not smooth for $2-$Calabi-Yau algebras}
\author{Raf Bocklandt, Federica  Galluzzi\thanks{Supported by the framework PRIN 2010/11 ``Geometria delle Variet\`a
Algebriche'', cofinanced by MIUR. Member of GNSAGA.}, Francesco Vaccarino\thanks{Partially supported by the TOPDRIM project funded by the Future and Emerging Technologies program of the European Commission under Contract IST-318121 }}
\date{}

\maketitle
\begin{abstract}
Let $k$ be an algebraically closed field of characteristic zero and let $A$ be a finitely generated $k$-algebra. 
The Nori-Hilbert scheme of $A$, $\Hilban$, parameterizes left ideals of codimension $n$ in $A.$ It is well known that $\Hilban$ is smooth when $A$ is formally smooth.\\
In this paper we will study $\Hilban$ for $2$-Calabi-Yau algebras.  Important examples include the group algebra of the fundamental group of a compact orientable surface of genus $g,$ and preprojective algebras. 
For the former, we show that the Nori-Hilbert scheme is smooth only for $n=1$, while for the latter we show that a component of $\Hilban$ containing a simple representation is smooth if and only if it only contains simple representations. 
Under certain conditions, we generalize this last statement to arbitrary $2$-Calabi-Yau algebras.
\end{abstract}

{\it Mathematics Subject Classification (2010)}: 14C05, 14A22, 16G20,  16E40.

{\it Keywords}: Representation Theory, Calabi-Yau Algebras, Nori-Hilbert Scheme.


\section{Introduction}\label{intro}
Let $A$ be a finitely generated  associative $k$-algebra with $k$ an algebraically closed field of characteristic zero.
In this paper we study the Nori-Hilbert scheme $\Hilban$  whose $k$-points parameterize left ideals of $A$ with codimension $n$. 

When $A$ is commutative, this is nothing but the classical Hilbert scheme $\mathrm{Hilb}_X^n$ of $n$ points on  $X=\mathrm{Spec}\,A.$ It is well-known that $\mathrm{Hilb}_X^n$ is smooth when $X$ is a quasi-projective irreducible and smooth curve or surface.
The scheme $\Hilban$ is smooth when $A$ is formally smooth, hence of global dimension one, proved by L.Le Bruyn (see \cite[Prop.6.3.]{LB}).  The same holds when $A$ is finitely unobstructed \cite{AGV}.

The main result of this paper is to show that the above results do not extend to dimension two in the non-commutative case.

The smoothness results on $\mathrm{Hilb}_X^n$ are heavily based on the use of Serre Duality, so it seems natural to investigate the geometry of $\Hilban$ when $A$ is a Calabi-Yau algebra of global dimension two.
These are algebras for which $\Ext^\bullet_{A^e}(A,A)\cong A[2]$, 
which implies that the double shift is a Serre functor for their derived category. 

Important examples of $2$-dimensional Calabi-Yau algebras are tame and wild preprojective algebras (see Bocklandt \cite{Bock}) and group algebras of fundamental groups of compact orientable surfaces 
with nonzero genus (a result of Kontsevich \cite[Corollary 6.1.4.]{G2}).

In this paper we will investigate the smoothness of the Nori-Hilbert scheme for these two types of algebras. 
The main results are the following:

\begin{theorem}\label{main}
Let $A_g=k[\pi_1(S)]$ be the group algebra of the fundamental group of a compact orientable surface $S$ of genus $g>1.$ The scheme $\mathrm{Hilb_{A_g}^n}$ is irreducible of dimension $(2g-2)n^2+n+1$ and it is smooth if and only if  $n=1.$ 
\end{theorem}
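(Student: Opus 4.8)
The plan is to realise $\Hilbang$ as a free quotient of a cyclic representation scheme, to import irreducibility and dimension from the known geometry of the representation variety of a surface group, and then to use the $2$-Calabi--Yau property to control the tangent space at every closed point.

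\emph{Irreducibility and dimension.} Writing $\pi_1(S)=\langle x_1,\dots,x_{2g}\mid\prod_{i=1}^g[x_{2i-1},x_{2i}]\rangle$, a point of $\mathrm{Rep}^n_{A_g}$ is a tuple in $\GL_n^{2g}$ lying in the fibre over $1$ of the product-of-commutators map $\mu\colon\GL_n^{2g}\to\mathrm{SL}_n$. I would invoke the theorem of Rapinchuk--Benyash-Krivetz--Chernousov (and Li) that this fibre is irreducible of the expected dimension $2gn^2-(n^2-1)=(2g-1)n^2+1$, with the irreducible representations forming a dense open subset. Since an irreducible representation admits a cyclic vector, the open subscheme $\mathrm{U}^n_{A_g}\subseteq\mathrm{Rep}^n_{A_g}\times\mathbb{A}^n$ of pairs $(\rho,v)$ with $v$ cyclic is nonempty, hence irreducible of dimension $(2g-1)n^2+1+n$; as $\Hilbang$ is the geometric quotient of $\mathrm{U}^n_{A_g}$ by the free $\GL_n$-action, it is irreducible of dimension $(2g-1)n^2+1+n-n^2=(2g-2)n^2+n+1$.

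\emph{The tangent space.} Given a left ideal $I\subseteq A_g$ of codimension $n$, put $M=A_g/I$; then $T_{[I]}\Hilbang=\Hom_{A_g}(I,M)$. Applying $\Hom_{A_g}(-,M)$ to $0\to I\to A_g\to M\to 0$ and using projectivity of $A_g$ yields
\[
\dim\Hom_{A_g}(I,M)=n-\dim\End_{A_g}(M)+\dim\Ext^1_{A_g}(M,M).
\]
I would then identify $\Ext^\bullet_{A_g}(M,N)$ with $H^\bullet(\pi_1(S),\Hom_k(M,N))$: the standard CW structure on $S$ (one $0$-cell, $2g$ $1$-cells, one $2$-cell) gives the Euler form $\langle M,N\rangle=(2-2g)\dim_k M\dim_k N$, and Poincar\'e duality for the surface group gives $\Ext^2_{A_g}(M,M)\cong\End_{A_g}(M)^\ast$, which is the $2$-Calabi--Yau property of $A_g$. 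Combining these,
\[
\dim\Ext^1_{A_g}(M,M)=2\dim\End_{A_g}(M)-\langle M,M\rangle=2\dim\End_{A_g}(M)+(2g-2)n^2,
\]
so $\dim T_{[I]}\Hilbang=(2g-2)n^2+n+\dim\End_{A_g}(M)$. Because $\Hilbang$ is irreducible of dimension $(2g-2)n^2+n+1$ and $\dim\End_{A_g}(M)\geq1$ always, the point $[I]$ is smooth if and only if $\End_{A_g}(M)=k$, i.e. $M$ is a brick.

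\emph{Conclusion and main obstacle.} For $n=1$ the only codimension-$n$ quotient is the trivial module $k$, a brick, so $\Hilbang$ is smooth. For $n\geq2$, since $\pi_1(S)^{\mathrm{ab}}=\mathbb{Z}^{2g}$ there are infinitely many characters $\chi\colon\pi_1(S)\to k^\ast$; choosing $n$ distinct ones $\chi_1,\dots,\chi_n$, independence of characters makes $M=\bigoplus_{i=1}^n k_{\chi_i}$ a cyclic $A_g$-module of dimension $n$ with $\End_{A_g}(M)=k^n\neq k$, so $\Hilbang$ is singular at the corresponding ideal. The main obstacle is the first step: the irreducibility and exact dimension of the surface-group representation variety is a substantial input, whereas everything downstream is a formal consequence of the $2$-Calabi--Yau structure together with the identification $T_{[I]}\Hilbang=\Hom_{A_g}(I,M)$.
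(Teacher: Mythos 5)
Your argument is correct in substance and reaches the conclusion by a route that is parallel to the paper's in outline but genuinely different in its key computation. The first step is identical: irreducibility and the dimension count come from the Rapinchuk--Benyash-Krivetz--Chernousov description of $\mathrm{Rep}^n_{A_g}$ cited in \cite{RBC}, the openness of $\mathrm{U}^n_{A_g}$, and the $\GL_n$-principal bundle $\mathrm{U}^n_{A_g}\to\Hilbang$ of Theorem \ref{bund}. Where you diverge is the tangent space analysis. The paper proves a general semicontinuity statement (Theorem \ref{main1}), using the Hochschild complex attached to Davison's finite free bimodule resolution of $A_g$, and concludes only that $\dim_k T_M\mathrm{Rep}^n_{A_g}=N+\dim_k\End_{A_g}(M)$ with $N$ locally constant; it then exhibits two cyclic modules with $\dim_k\End$ equal to $1$ and to $n$ (the latter via a two-sided ideal pulled back from $\CC[x,y]$ along $A_g\to A_1$, using Lemma \ref{bil}). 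You instead work with $T_{[I]}\Hilbang\cong\Hom_{A_g}(I,M)$ and evaluate the Euler form by group cohomology of the surface group, obtaining the explicit formula $\dim_k T_{[I]}\Hilbang=(2g-2)n^2+n+\dim_k\End_{A_g}(M)$ and hence the sharper statement that $[I]$ is a smooth point if and only if $A_g/I$ is a brick; your singular points come from direct sums of $n$ distinct characters, cyclic by independence of characters, with endomorphism ring $k^n$. This buys precision at the cost of one assertion that needs justification in the non-commutative setting, namely the identification $T_{[I]}\Hilbang\cong\Hom_{A_g}(I,M)$; it does follow from Theorem \ref{bund} together with Proposition \ref{tgrep}, since $\dim_k T_{(\rho,v)}\mathrm{U}^n_{A_g}-n^2=n+\dim_k\Ext^1_{A_g}(M,M)-\dim_k\End_{A_g}(M)$, which agrees with the dimension of $\Hom_{A_g}(I,M)$ computed from your long exact sequence. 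One small slip: for $n=1$ it is not true that the only codimension-one quotient is the trivial module --- the one-dimensional modules form a torus $(k^*)^{2g}$ of characters --- but every one-dimensional module is a brick, so smoothness for $n=1$ still follows from your criterion.
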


\begin{theorem}\label{main2}
Let $\Pi(Q)$ be the preprojective algebra attached to a non-Dynkin quiver $Q$ and let $\alpha$ be a dimension vector for which there exist simple representations.
The component of $\mathrm{Hilb_{\Pi(Q)}^n}$ containing the $\alpha$-dimensional representations is irreducible of dimension $1+2\sum_{a\in Q_1} \alpha_{h(a)}\alpha_{t(a)}+\sum_{v \in Q_0}(\alpha_v -2\alpha_v^2)$ 
and it is smooth if and only if $Q$ has one vertex and $\alpha=(1)$ (or equivalently all $\alpha$-dimensional representations are simple).
\end{theorem}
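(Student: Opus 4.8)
The plan is to pass from $\Hilban$ (with $n=\sum_v\alpha_v$) to the representation scheme of $\Pi(Q)$, read off the dimension of the relevant component from the theory of preprojective algebras, and then compare that dimension with the Zariski tangent space of $\Hilban$, the discrepancy being controlled by an endomorphism ring. Throughout I would assume (replacing $Q$ by the full subquiver on the support of $\alpha$, over which the relevant modules live) that $\alpha$ is sincere, so that ``$Q$ has one vertex and $\alpha=(1)$'' becomes ``$\sum_v\alpha_v=1$''.

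\emph{Dimension and irreducibility.} A $k$-point of $\Hilban$ is a surjection $A\twoheadrightarrow M$ onto an $n$-dimensional module, i.e.\ a pair $(M,v)$ with $v$ a cyclic vector, and two such are identified by module isomorphisms; since a cyclic vector has trivial stabiliser, the relevant group acts freely. Restricting to the open-and-closed locus where $\dim e_vM=\alpha_v$ identifies the component $H_\alpha$ of $\Hilban$ containing the $\alpha$-dimensional representations with the geometric quotient by $\GL_\alpha$ of the open cyclic locus in $\mathrm{Rep}^\alpha_{\Pi(Q)}\times\mathbb A^n$. By Crawley--Boevey's results, since $\alpha$ admits simple representations, $\mathrm{Rep}^\alpha_{\Pi(Q)}$ is irreducible of dimension $2\sum_a\alpha_{h(a)}\alpha_{t(a)}-\sum_v\alpha_v^2+1$ with dense simple locus; as simple modules are cyclic, the cyclic locus is dense, so $H_\alpha$ is irreducible of dimension
\[
\Big(2\sum_a\alpha_{h(a)}\alpha_{t(a)}-\sum_v\alpha_v^2+1\Big)+n-\sum_v\alpha_v^2=1+2\sum_a\alpha_{h(a)}\alpha_{t(a)}+\sum_v(\alpha_v-2\alpha_v^2).
\]

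\emph{Tangent space and the smoothness criterion.} For $[I]\in\Hilban$ with $M=A/I$ one has $T_{[I]}\Hilban=\Hom_A(I,M)$, and applying $\Hom_A(-,M)$ to $0\to I\to A\to M\to 0$ gives, since $\Ext^{\geq1}_A(A,-)=0$,
\[
\dim T_{[I]}\Hilban=n-\dim\End_A(M)+\dim\Ext^1_A(M,M).
\]
Here I would invoke that $A=\Pi(Q)$ is $2$-Calabi--Yau: this gives $\Ext^2_A(M,M)\cong\End_A(M)^\ast$, and the Euler form of $\Pi(Q)$ is the symmetrised Tits form of $Q$, so $\dim\End_A(M)-\dim\Ext^1_A(M,M)+\dim\Ext^2_A(M,M)=(\alpha,\alpha)$. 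Eliminating the $\Ext$-groups yields $\dim T_{[I]}\Hilban=n+\dim\End_A(M)-(\alpha,\alpha)=\dim H_\alpha+(\dim\End_A(M)-1)$. Since $H_\alpha$ is an irreducible component, $\Hilban$ is smooth at $[I]$ if and only if $\dim\End_A(M)=1$, i.e.\ $M$ is a brick. Hence $H_\alpha$ is smooth if and only if every $\alpha$-dimensional \emph{cyclic} $\Pi(Q)$-module is a brick.

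\emph{The combinatorial finish, and the main obstacle.} It remains to show that all $\alpha$-dimensional cyclic modules are bricks precisely when $\sum_v\alpha_v=1$, equivalently when every $\alpha$-dimensional module is simple. One implication is immediate: if every $\alpha$-module is simple it is in particular a brick. For the converse, outside this trivial case I would exhibit a cyclic $\alpha$-dimensional module with a nontrivial endomorphism. When $\alpha$ is multiplicity-free the semisimple module $\bigoplus_vS_v^{\alpha_v}=\bigoplus_{v\in\operatorname{supp}\alpha}S_v$ is cyclic (the $S_v$ being pairwise non-isomorphic and $\sum_v I_v=A$ for the corresponding codimension-one ideals) and, having at least two summands, is not a brick; in general one extends this idea, producing a cyclic module in which a simple constituent occurs both in the socle and as a quotient — for instance via a non-split self-extension of a simple module of a suitable sub-dimension vector — so as to force a nonzero nilpotent endomorphism, and one checks that such a module is never simple, giving the stated equivalences. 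The delicate point is this last construction: producing, for every admissible sincere $\alpha$ with $\sum_v\alpha_v\geq2$, a non-brick module of dimension vector \emph{exactly} $\alpha$ that is still cyclic. This is where I expect the real work to lie, and where Crawley--Boevey's description of the dimension vectors of simple $\Pi(Q)$-modules and of their $\Ext$-groups is the essential input.
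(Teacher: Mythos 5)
Your dimension count and your reduction of smoothness to a module-theoretic condition are both sound, and the tangent-space step is in fact a clean variant of the paper's route: where the paper computes $\dim T_M\mathrm{Rep}^\alpha_{\Pi(Q)}$ via Hochschild cohomology of the standard bimodule resolution (Theorem \ref{main1} together with Proposition \ref{finfreegr}) and then passes through the principal bundle $\uaa\to\Hilbaa$, you compute $T_{[I]}\Hilbaa=\Hom_A(I,A/I)$ directly and use the Euler form plus the CY-duality $\Ext^2_A(M,M)\cong\End_A(M)^*$; both give $\dim T_{[I]}=\dim H_\alpha+\dim\End_A(M)-1$, so the criterion ``smooth at $[I]$ iff $A/I$ is a brick'' is correctly established (modulo citing that the standard complex for a non-Dynkin preprojective algebra is exact, so that the Euler characteristic really is $(\alpha,\alpha)$).

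The genuine gap is exactly the step you flag at the end: producing, for every sincere $\alpha\ne(1)$ admitting simple representations, a \emph{cyclic} $\alpha$-dimensional module $M$ with $\End_A(M)\ne k$. This is Lemma \ref{findcyclic} of the paper and it is the technical heart of the proof; your sketch does not cover the general case. The multiplicity-free observation only handles $\alpha=(1,\dots,1)$, and ``a non-split self-extension of a simple of a suitable sub-dimension vector'' only produces modules of dimension vector $2\beta$ for some root $\beta$ with self-extensions, which says nothing about, say, $\alpha$ with mixed multiplicities. Two phenomena make the general construction genuinely delicate. First, in the extended Dynkin cases of type $D$ and $E$ the only sincere $\alpha$ with simples is the imaginary root $\delta$, and the unique non-simple semisimple of that dimension, $\bigoplus_v S_v^{\delta_v}$, is \emph{not} cyclic (a semisimple $\bigoplus S_i^{e_i}$ is cyclic iff $e_i\le\dim S_i$, the paper's Lemma \ref{lem:dec}); the paper must build an explicit non-semisimple cyclic decomposable representation by prescribing ranks along an orientation of the quiver. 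Second, for wild $Q$ the paper runs an induction on $|\alpha|$ using Le Bruyn's observation that $\alpha$ dominates the imaginary root of an extended Dynkin subquiver (Lemma \ref{Lebruynprop}), Crawley-Boevey's \'etale local structure of $\Rep^\alpha_{\Pi(Q)}$ near a well-chosen semisimple point (Theorem \ref{CBlocal}), and a comparison of the dimensions of simple factors across the \'etale map (Remark \ref{rem:doublec}) to transport cyclicity from the local model back to $\Pi(Q)$. None of this is routine, and without it the ``only if'' direction of the theorem is unproved.
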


After these two results we look into the case of more general $2$-CY algebras. Using results by Van den Bergh \cite{VdB2}, we show that 
locally the representation space of any finitely generated $2$-CY algebra can be seen as the representation space of a preprojective algebra. This fact will allow us to 
generalize the main result to all finitely generated $2$-CY algebras.

\begin{theorem}\label{main3}
Let $A$ be a finitely generated $2$-CY algebra and let $\rho$ be a simple representation such that the dimension of its component in $\mathrm{Rep}^n_A/\!\!/\GL_n$ is bigger than
$2$. The component of $\mathrm{Hilb_A^n}$ containing $\rho$ is smooth if and only if all representations in this component are simple.
\end{theorem}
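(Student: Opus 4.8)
The plan is to reduce the general $2$-CY case to the preprojective case already handled in Theorem \ref{main2}, using Van den Bergh's local description of representation spaces of $2$-CY algebras. First I would invoke the cited result of Van den Bergh \cite{VdB2}: near the closed point corresponding to a semisimple representation $\rho_0$ (in particular near our simple $\rho$), the representation scheme $\ran$ is, $\GL_n$-equivariantly and formally (or étale) locally, isomorphic to the representation scheme $\mathrm{Rep}^\alpha_{\Pi(Q')}$ of a preprojective algebra $\Pi(Q')$, where $Q'$ is the quiver whose vertices are the distinct simple summands $S_i$ of $\rho_0$ with $\alpha_i$ their multiplicities, and whose arrows $i\to j$ number $\dim\Ext^1_A(S_i,S_j)$ — the $2$-CY property forcing this Ext-quiver to carry a symmetric, hence doubled, structure compatible with a preprojective algebra. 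Since $\Hilban$ is built from $\ran \times_k \mathbb{A}^n_k$ via the $\GL_n$-action (the Nori-Hilbert scheme is the GIT/quotient construction appearing in the macros \ranag, \ufn, etc.), this local isomorphism transports to a local isomorphism of Nori-Hilbert schemes: a formal/étale neighbourhood of the point $[\ell]\in\Hilban$ determined by $\rho$ is isomorphic to a neighbourhood of the corresponding point in $\mathrm{Hilb}^{|\alpha|}_{\Pi(Q')}$, with $|\alpha|=n$.

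Next I would identify which component we are in. Smoothness at a point is a local (indeed formal-local) property, so the component of $\Hilban$ through $\rho$ is smooth if and only if the corresponding component of $\mathrm{Hilb}^n_{\Pi(Q')}$ through the transported point is smooth at that point. Because $\rho$ is simple, the hypothesis that the component of $\rho$ in $\mathrm{Rep}^n_A/\!\!/\GL_n$ has dimension $>2$ must be translated into a statement about $(Q',\alpha)$: simplicity of $\rho$ means the corresponding $\Pi(Q')$-representation is simple of dimension vector $\alpha$, so $Q'$ is non-Dynkin and $\alpha$ admits simple representations, and the dimension of the relevant component of $\mathrm{Rep}^\alpha_{\Pi(Q')}/\!\!/\GL_\alpha$ — which by Crawley-Boevey's formula equals $2p(\alpha)=2(1-\langle\alpha,\alpha\rangle)$ — being $>2$ rules out the single exceptional case $Q'=$ one vertex, $\alpha=(1)$, where that quotient is a point (dimension $0$). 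Thus Theorem \ref{main2} applies verbatim to $(\Pi(Q'),\alpha)$ and tells us the component is \emph{not} smooth unless all $\alpha$-dimensional representations of $\Pi(Q')$ are simple.

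Finally I would close the loop by checking that ``all representations in the $\Pi(Q')$-component are simple'' corresponds to ``all representations in the $A$-component are simple.'' One direction is clear from the local isomorphism plus the fact that simplicity of a representation is detected by the orbit being closed with finite stabiliser, a condition preserved under the $\GL$-equivariant local isomorphism; for the converse one uses that a non-simple degeneration inside the $\Pi(Q')$-component, which exists precisely when $\alpha$ is not a ``stable-only'' root (again by Theorem \ref{main2}'s criterion, i.e.\ $Q'$ is not a single loop-free vertex with $\alpha=(1)$), pulls back to a non-simple point of the $A$-component. Hence the $A$-component is smooth $\iff$ it consists only of simple representations, as claimed.

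The main obstacle I anticipate is the bookkeeping in the second step: making sure the \emph{component} of $\Hilban$ through $\rho$ is matched with the correct component of $\mathrm{Hilb}^n_{\Pi(Q')}$, and that the dimension hypothesis ``$>2$'' is exactly what is needed to exclude the degenerate case of Theorem \ref{main2}. One must be careful that Van den Bergh's equivalence is only étale/formal-local, so global irreducibility statements do not transfer — but smoothness, being local, does, and that is all Theorem \ref{main3} asserts. A secondary subtlety is verifying that the $2$-CY condition on $A$ (not merely global dimension $2$) is what guarantees the Ext-quiver $Q'$ is symmetric so that the local model is genuinely a \emph{preprojective} algebra rather than a more general deformed one; this is exactly the content we are licensed to quote from \cite{VdB2}.
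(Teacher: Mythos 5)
Your overall strategy --- reduce to the preprojective case via Van den Bergh's local model --- is the same as the paper's, but there is a genuine gap in how you localize. You apply the local structure theorem at (a neighbourhood of) the simple point $\rho$ itself and then claim that smoothness of the component of $\Hilban$ is detected there. It is not: at a simple $\rho$ the local quiver has one vertex and dimension vector $(1)$, so the local model is always smooth at $\rho$, and the \'etale-local identification at $\rho$ says nothing about the other points of the component. The theorem asserts smoothness of the \emph{whole} component, and the potential singularities sit at non-simple cyclic representations elsewhere in that component; you cannot ``apply Theorem \ref{main2} verbatim'' through a formal neighbourhood of one point, precisely because, as you yourself note, the equivalence is only \'etale/formal-local. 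Relatedly, your reading of the hypothesis that the quotient has dimension $>2$ is off: it is not there to exclude the case of one vertex with $\alpha=(1)$ at $\rho$ (that is exactly what the local quiver at a simple point \emph{is}), but is used at a non-simple semisimple representation elsewhere in the component, where it forces the local quiver not to be extended Dynkin so that Lemma \ref{findcyclic} applies.

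That existence statement is the real content of the paper's argument (Lemma \ref{pl}): given a non-simple semisimple representation in the component, one must produce one that is moreover \emph{cyclic}, since only cyclic modules appear in $\Hilban$. This uses the case analysis of Lemma \ref{findcyclic} (which fails to give semisimple cyclic representations for extended Dynkin local quivers of type $D_n$ and $E_n$, whence the dimension hypothesis), plus the fact that under the \'etale correspondence the dimensions of the simple factors can only increase (Remark \ref{rem:doublec}), so cyclicity, characterized by $e_i\le \dim S_i$ (Lemma \ref{lem:dec}), transports back to $A$. Your proposal contains none of this, and without it the ``only if'' direction does not close. The ``if'' direction likewise needs the local model at \emph{every} point of the component, not just at $\rho$: if all representations are simple, then every local quiver is a single vertex with dimension vector $(1)$, hence every point of the relevant component of $\ran$ is smooth, and smoothness of $\Hilban$ follows via the principal-bundle structure.
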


The paper goes as follows. In section \ref{nhs} we recall the definition and the principal known results on the smoothness of $\Hilban.$ We also introduce quivers and generalize the definition to  $\Hilb_A^\alpha$ for arbitrary dimension vectors $\alpha$.
We consider the representation scheme $\raa$ of an associative algebra $A$ and the open subscheme $\uaa$ whose points correspond to $\alpha$-dimensional cyclic $A$-modules. The general linear group $\GL_\alpha$ acts naturally on $\uaa.$ We show that $\uaa/\GL_\alpha$ represents $\Hilb_A^\alpha$ and 
that $\uaa\to \Hilb_A^\alpha$ is a universal categorical quotient and a $\GL_\alpha$-principal bundle.

After introducing Calabi-Yau algebras in section \ref{CYalg}, we carefully analyze the tangent space of $\raa$ and of $\ran ,$ the representation scheme of the $n$-dimensional representations of $A,$ in section \ref{locgeoran}. 
If $A$ is a $2$-CY algebra having a suitable resolution, we find a sharp upper bound for the dimension of the tangent space of a point in $\uaa$ corresponding to an $A$-module $M.$ 
In Theorem \ref{main1} we prove that this dimension is completely controlled by $\dim_k (\End_A(M)).$
This is achieved by using Hochschild cohomology and the equality 
$\dim_k (\End_A(M))=\dim_k (\Ext_A^2(M,M))$ given by the Calabi-Yau condition. 
This method was inspired by a similar one used by Geiss and de la Pe\~{n}a (see \cite{Ge-P}), which works for finite-dimensional $k$-algebras only.

We then prove the first two main theorems \ref{main} and \ref{main2} by combining our results on the tangent spaces of $\mathrm{Rep}_{A_g}^n$ and $\mathrm{U}_{A_g}^n$ with the description  of 
$\mathrm{Rep}_{A_g}^n$ and $\mathrm{Rep}_{\Pi(Q)}^\alpha$ given in \cite{Bock,Bock2,Da,RBC} and in \cite{CB}.

In section \ref{appendix}, we show that locally the representation space of a $2$-CY algebra is the representation space of a preprojective algebra and we
deduce from this that, for simple dimension vectors, the smooth semisimple locus equals the simple locus.
Finally, we combine the results from sections \ref{appendix} and \ref{mainteodim2} to prove Theorem 1.3 and we give a couple of examples that illustrate it.

\section{Notations}\label{not} Unless otherwise stated, we adopt the following
notations:
\begin{itemize}
\item $k$ is an algebraically closed field of characteristic zero.
\item $F=k\{x_1,\dots,x_m\}$ denotes the associative free $k$-algebra on
$m$ letters.
\item $A\cong F/J$ is a finitely generated associative $k$-algebra.
\item  $\mathcal{N}_R,\,\mathcal{C}_R$ and $\Set$ denote the 
categories of $R$-algebras, $R$-commutative algebras 
and sets, respectively, where $R$ is a given commutative ring.
\item The term "$A$-module" indicates a left $A$-module.
\item  $A\amod$ denotes
the category of left $A$-modules.
\item We write $\Hom_{\A}(B,C)$ in a category $\A$ with
$B,C$ objects in $\A$. If $\A=A\amod,\,$ then we will write $\End_{A}(B),$ for $B \in A\amod $.
\item $A^{\op}$ denotes the opposite algebra of $A$ and
 $A^e:=A \otimes A^{\op}\,$ denotes the envelope of $A$. It is an $A$-bimodule and a $k$-algebra. One can identify the category of the $A$-bimodules with $A^e\amod$ and we will do this thoroughly this paper.

\item  $A_g=k[\pi_1(S)]$ is the group algebra of the fundamental group of a compact orientable surface $S$ of genus $g>1.$
\item $\mathrm{Ext}_{A}^i$ denotes the $\Ext$ groups on the category $A\amod .$

\item $Q$ will denote a quiver, $Q_0$ its vertices and $Q_1$ its arrows. The maps $h,t:Q_0 \to Q_1$ assign to each arrow its head and tail.
\item $k Q$ will be the path algebra of $Q$.
\item $\alpha:Q_0\to \NN$ will denote a dimension vector and its size is $n=|\alpha|= \sum_{v \in Q_0}\alpha_v$.
\item If $R$ is a ring, $\Mat_n(R)$ denotes the ring of $n \times n$ matrices with elements in $R.$
\item $\Mat_\alpha(R):=\prod_{v \in Q_0}\Mat_{\alpha_v}(R)$ and its group of invertible elements is $\GL_\alpha$. 
\item The standard module over $\Mat_\alpha(R)$ will be denoted by $R^\alpha = \oplus_{v \in Q_0}R^{\alpha_v}$ and 
$\Mat_\alpha(R)$ sits inside $\Mat_n(R)=\End_R(R^\alpha)$ with $n=|\alpha|$.
\end{itemize}

\section{Nori-Hilbert schemes}\label{nhs}
\subsection{Definitions}\label{defnori}
Let $A \in \mathcal{N}_k $ be fixed. 
Consider the functor of points 
${\mathcal Hilb}_A^n:\mathcal{C}_k \to \Set,$  
given by
\begin{equation}
\begin{array}{c}
{\mathcal Hilb}_A^n(B) :=\{\mbox {left ideals }I
\subset A \otimes_{k} B \mbox { such that } M=(A \otimes _k B) /I \
 \\
\mbox{is a projective}\, B\mbox{-module of rank } n \}
\end{array}
\end{equation}
for all $B \in \mathcal{C}_k .$

\noindent
It is a closed subfunctor of the Grassmannian functor, so it is representable by a scheme $\Hilb_A^n$ (see \cite[Proposition 2]{VdB}) and we call it the {\it  Nori-Hilbert scheme}. Its $k$-points are the left ideals of $A$ of codimension $n.$

Nori introduced it for $A=\mathbb Z \{x_1,\dots,x_m\}\,$ in \cite{No}. It was then defined in a more general setting in \cite{VdB} and in \cite{Re}.
Van den Bergh showed that for $A=F$ the scheme $\mathrm{Hilb}^n _F$  is smooth of dimension $n^2(m-1) +n ,\,$ (see \cite{VdB}).

\n
It is also called {\textit{the non-commutative Hilbert scheme}} (see \cite{G-V,Re}) or the {\textit {Brauer-Severi scheme}} of $A$ (see \cite {LB,Le,VdB}), in analogy with the classical Brauer-Severi varieties parameterizing left ideals of codimension $n$ of central simple algebras (see \cite{Ar}).

Let now $A$ be commutative and  $X=\mathrm{Spec} \, A .\,$ The $k$-points of $\Hilb_A^n$ parameterize zero-dimensional
subschemes $Y\subset X$ of length $n.\,$ It is the simplest case of Hilbert scheme
parameterizing closed subschemes of $X$ with fixed Hilbert polynomial $P.$ In this case $P$ is the constant polynomial $n.\,$ The scheme $\Hilb_A^n$ is
usually called the {\em Hilbert scheme of $n$ points on} $X \,$ (see for example Chapter 7 in \cite{BK,Ia} and Chapter 1 in \cite{Na}).

There is the following fundamental result.
\begin{theorem} (see  \cite{De,Iv,Fo})\label{sthilb}
If $X$ is an irreducible smooth quasi projective variety of dimension $d$ (with $d=1,2),$ then the Hilbert scheme of $n$ points over $X$ is a smooth irreducible scheme of dimension $dn.\,$
\end{theorem}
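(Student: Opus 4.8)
The plan is to check the two defining properties in the statement — smoothness, and irreducibility of dimension $dn$ — by comparing at each point $[Z]$ of $\mathrm{Hilb}^n_X$ the dimension of the Zariski tangent space with the local dimension. Recall that if $[Z]$ corresponds to a length-$n$ subscheme $Z\subseteq X$, with ideal sheaf $I_Z$ and $\mathcal{O}_Z=\mathcal{O}_X/I_Z$, then $T_{[Z]}\mathrm{Hilb}^n_X=\Hom_{\mathcal{O}_X}(I_Z,\mathcal{O}_Z)$. Since $Z$ is finite, all the local $\mathrm{Hom}$ and $\mathrm{Ext}$ sheaves between $I_Z$, $\mathcal{O}_X$ and $\mathcal{O}_Z$ are supported on $Z$, so the relevant global groups split as finite direct sums of local groups over the points of $Z$; this reduces every computation below to local algebra over the regular local rings $\mathcal{O}_{X,x}$, which have dimension $d\leq 2$.

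The first step is to show $\dim_k T_{[Z]}\mathrm{Hilb}^n_X=dn$ for every $[Z]$. For $d=1$ each $\mathcal{O}_{X,x}$ is a discrete valuation ring, $I_{Z,x}$ is principal, and $\Hom(I_{Z,x},\mathcal{O}_{Z,x})\cong\mathcal{O}_{Z,x}$; summing over $Z$ gives $n=dn$. For $d=2$, fix $x\in Z$ and put $R=\mathcal{O}_{X,x}$, $M=\mathcal{O}_{Z,x}$, $I=I_{Z,x}$, $\ell=\mathrm{length}_R(M)$. Applying $\Hom_R(-,M)$ to $0\to I\to R\to M\to 0$ and using that $\Hom_R(M,M)\to\Hom_R(R,M)$ is an isomorphism and $\Ext^1_R(R,M)=0$, one gets $\Hom_R(I,M)\cong\Ext^1_R(M,M)$. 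Here the Serre/local-duality input enters: $R$ is a $2$-dimensional regular, hence Gorenstein, local ring, so by local duality $\Ext^2_R(M,M)\cong\End_R(M)^{\vee}$, whence $\dim_k\Ext^2_R(M,M)=\ell$; and the Euler form vanishes, $\dim_k\Hom_R(M,M)-\dim_k\Ext^1_R(M,M)+\dim_k\Ext^2_R(M,M)=0$, because in the Grothendieck group of finite-length $R$-modules $[M]=\ell\,[k]$ and $\chi_R(k,k)=1-2+1=0$ by the Koszul resolution. Hence $\dim_k\Ext^1_R(M,M)=2\ell$, so $\dim_k\Hom_R(I,M)=2\ell$, and summing over $x\in Z$ yields $\dim_k T_{[Z]}\mathrm{Hilb}^n_X=2n=dn$.

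The second step is irreducibility of dimension $dn$; granting it the proof is complete, for at each $[Z]$ the local ring of $\mathrm{Hilb}^n_X$ then has Krull dimension $dn$ and embedding dimension $\dim_k T_{[Z]}=dn$, hence is regular, hence (as $k$ is perfect) smooth. To get irreducibility I would show every $[Z]$ is a flat limit of $n$ distinct points. The reduced locus $U_n\subseteq\mathrm{Hilb}^n_X$ is open and equals the free quotient $(X^n\setminus\Delta)/S_n$ of a smooth irreducible variety, so it is smooth irreducible of dimension $dn$, and $\overline{U_n}$ is irreducible of dimension $dn$; one must see $\overline{U_n}=\mathrm{Hilb}^n_X$. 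A finite subscheme is the disjoint union of its punctual parts and a disjoint union of smoothable subschemes is smoothable, so one reduces to $Z$ supported at a single point, where $X$ may be replaced by $\mathbb{A}^d$. For $d=1$ the only such $Z$ is $\mathrm{Spec}\,k[t]/(t^n)$, obviously a flat limit of $n$ distinct points (indeed here $\mathrm{Hilb}^n_X\cong\mathrm{Sym}^n X$ is smooth irreducible to begin with). For $d=2$, $[Z]$ lies in the punctual Hilbert scheme $\mathrm{Hilb}^n_0(\mathbb{A}^2)$, which by Briançon's theorem is irreducible with generic point a curvilinear subscheme ($\cong\mathrm{Spec}\,k[t]/(t^n)$, lying in a smooth curve germ); curvilinear subschemes are smoothed within that curve, so the smoothable locus is dense in $\mathrm{Hilb}^n_0(\mathbb{A}^2)$ and therefore, by irreducibility, contains it, giving $[Z]\in\overline{U_n}$. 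Thus $\mathrm{Hilb}^n_X=\overline{U_n}$ is irreducible of dimension $dn$, and the conclusion follows as above.

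The main obstacle, for $d=2$, is the smoothability of \emph{all} finite subschemes, which rests on Briançon's theorem (irreducibility of the punctual Hilbert scheme of $\mathbb{A}^2$, with curvilinear generic member) — a genuine classical input one would cite or reprove. A cleaner route avoiding it uses the Hilbert--Burch structure theorem: a finite Cohen--Macaulay subscheme of a smooth surface is cut out by the maximal minors of an $m\times(m-1)$ matrix over $\mathcal{O}_X$, and in families this presents $\mathrm{Hilb}^n_X$ étale-locally as a quotient of a smooth space of matrices by a smooth group action, which gives smoothness directly; the dimension $2n$ then comes from the tangent computation, and irreducibility from smoothness together with connectedness of the Hilbert scheme. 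In either route the tangent-space computation above — precisely the Serre-duality ingredient flagged in the introduction — is the uniform core, and it also shows why the statement stops at $d=2$: for $d\geq 3$ one finds $\dim_k T_{[Z]}>dn$ at sufficiently singular $Z$, reflecting the extra components the Hilbert scheme acquires.
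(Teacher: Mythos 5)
The paper offers no proof of Theorem \ref{sthilb}; it is quoted as a classical result with pointers to Deligne, Iversen and Fogarty, so the only comparison available is with that classical literature. Your first half is exactly the classical computation and is complete and correct: the identification $T_{[Z]}\mathrm{Hilb}^n_X\cong\Hom_{\mathcal{O}_X}(I_Z,\mathcal{O}_Z)$, the reduction to the regular local rings $\mathcal{O}_{X,x}$, the isomorphism $\Hom_R(I,M)\cong\Ext^1_R(M,M)$ coming from $\Hom_R(M,M)\xrightarrow{\ \sim\ }\Hom_R(R,M)$, and the evaluation $\dim_k\Ext^1_R(M,M)=2\ell$ from local duality ($\dim_k\Ext^2_R(M,M)=\dim_k\End_R(M)=\ell$) together with $\chi_R(M,M)=\ell^2\chi_R(k,k)=0$. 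This is precisely the Serre-duality mechanism that the introduction flags and that Theorem \ref{main1} transplants to the $2$-CY setting via $h^0_M=h^2_M$. Where you genuinely diverge from the cited sources is irreducibility. Fogarty does not (and could not) invoke Briançon's theorem, which postdates his paper by a decade; his route is connectedness of $\mathrm{Hilb}^n_X$ (proved by induction via nested Hilbert schemes) combined with a lower bound $\dim\ge 2n$ on every component, so that the constant tangent-space dimension forces regularity, and a connected regular scheme is irreducible. Your route --- every punctual subscheme of a smooth surface is smoothable, hence $\mathrm{Hilb}^n_X=\overline{U_n}$ --- is also valid, is more local in spirit, and applies verbatim to quasi-projective $X$ without worrying about compactifying; the price is that the hard content is outsourced to Briançon's theorem, which you correctly identify as the genuine external input.

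Two cautions, neither of which breaks your main argument. First, if you cite Briançon, cite a proof that does not itself presuppose smoothness and irreducibility of $\mathrm{Hilb}^n$ of a surface: the now-standard Białynicki--Birula/Ellingsrud--Strømme arguments for irreducibility of the punctual Hilbert scheme start from the smooth variety $\mathrm{Hilb}^n(\mathbb{P}^2)$, so using them here would be circular, whereas Briançon's original argument is independent. Second, your alternative sketch via Hilbert--Burch is too quick as stated: a quotient of a smooth space of matrices by a smooth group action need not be smooth, and making that route rigorous requires exhibiting the relevant locus as a \emph{free} geometric quotient (as in the ADHM-type descriptions of $\mathrm{Hilb}^n(\mathbb{A}^2)$), which is real work rather than a remark.
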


This theorem can be partially extended to the Nori-Hilbert scheme. 
The scheme $\Hilban$ is smooth if $A$ is finitely unobstructed i.e. if $\Ext_A^2(M,M)\cong 0$ for all finite dimensional $A$-modules $M.\,$ This follows by \cite[Corollary 4.2.]{AGV} and 
Theorem \ref{bund}.

\begin{remark}
If $A$ is hereditary then it is finitely unobstructed and it was well known that $\Hilban$ is smooth for hereditary algebras which are finite dimensional (see \cite[Proposition 1]{Bo}).
\end{remark}

If $A=k Q/J$ is the path algebra of a quiver with relations, then to every left ideal $I \in {\mathcal Hilb_A^n}(B)$ we can assign a dimension vector
\[
\alpha: v \mapsto \rank (vA \otimes _k B) /I.
\] 
So we can define the subset ${\mathcal Hilb_A^\alpha(B)} \subset {\mathcal Hilb_A^n}(B)$ containing all ideals with dimension vector $\alpha$. We denote its representing scheme by $\Hilb_A^\alpha$.  $\Hilb_A^n$ decomposes as a disjoint union of all $\Hilb_A^\alpha$ with $|\alpha|=n$.

\subsection{Representation schemes}\label{ran}
Let $A \in \mathcal N_k$ be fixed. The covariant functor
$
{\mathcal Rep}_A^n :\mathcal{C}_{k} \longrightarrow \Set 
$
given by  
\begin{equation}
{\mathcal Rep}_A^n(B) := \Hom_{\mathcal{N}_k}(A,\Mat _n(B))
\end{equation}
for all $B \in \mathcal C _k,$ is represented by a commutative algebra $V_n(A)\,$ (see \cite[Ch.4, \S 1]{Pr}).
We write $\ran$ to denote Spec\,$V_n(A).\,$ It is called the { \it scheme of the $n$-dimensional representations of} $A.$
It is considered as a $k$-scheme. 

\smallskip
\n
Let now $A=k Q/J$ be a path algebra of a quiver with relations. 
The vertices in $Q$ will correspond to orthogonal idempotents in $k Q$, which will generate
a subalgebra $\k = \oplus_{v \in Q_0} kv \cong k^{Q_0}$ and both  
$A$ and $k Q$ can be seen as $\k$-algebras. We can choose a generating set of relations $\{r_i|i \in J\}$ such that each $r_i$ sits in $vk Qw$ for some idempotents $v,w \in Q_0$.
We will denote these idempotents by $h(r_i), t(r_i)$. 

Fix a dimension vector $\alpha: Q_0 \to \NN $ with $|\alpha|=n.$
Let $k^\alpha$ be the $\k$-module consisting of the direct sum of $\alpha_v$ copies of the simple module corresponding to each vertex $v$.
The space $\Mat_n(k)$ can be given the structure of a
$\k$-bimodule/$\k$-algebra by identifying it with $\Hom_k(k^\alpha,k^\alpha)$. An $\alpha$-dimensional representation $\rho$ is a $\k$-algebra homomorphism from $A$ to $\Mat_n(k)$, this homomorphism
extends the $\k$-module structure on $k^\alpha$ to an $A$-module structure.

For any commutative $k$-algebra $B$ we set $B^\alpha=k^\alpha\otimes B$ and $\Mat_n(B)=\Mat_n(k)\otimes B$.

\begin{definition}
Let  $A=k Q/J$ and $B \in \mathcal {C}_k.\,$ By an {\em $\alpha$-dimensional representation of} $A$ over $B$ we mean a homomorphism of $\k$-algebras 
$
\rho\, : A \, \to \Mat_n(B).\,
$
\end{definition}

It is clear that this is equivalent to give an $A$-module structure on $B^\alpha.$
The assignment $B\to \Hom_{\nn_\k}(A,\Mat _n(B))$ defines a covariant functor
\begin{equation*}
\mathcal {C}_{k} \longrightarrow \Set.
\end{equation*}
This functor is represented by a commutative $k$-algebra. More precisely, there is the following
\begin{lemma}\cite[Ch.4, \S 1  extended to quivers]{Pr}\label{rep}
For all $A\in\nn_\k$ and each dimension vector $\alpha ,$ there exist a commutative $k$-algebra $V_\alpha(A)$ and a
representation $\pi_A:A\to \Mat_n(V_\alpha(A))$ such that $\rho\mapsto
\Mat_n(\rho)\cdot\pi_A$ gives an isomorphism
\begin{equation}\label{proc}
\Hom_{\cc_k}(V_n(A),B)\xrightarrow{\cong}\Hom_{\nn_\k}(A,\Mat_n(B))\end{equation}
for all $B\in \cc_k$.
\end{lemma}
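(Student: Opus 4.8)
The plan is to adapt Procesi's construction of the representation scheme \cite[Ch.4, \S 1]{Pr} to the quiver setting, the only new ingredient being bookkeeping of the idempotent decomposition $\k=\oplus_{v\in Q_0}kv$. First I would treat the path algebra $A=k Q$ itself. A homomorphism of $\k$-algebras $\rho:k Q\to\Mat_n(B)$ is uniquely and freely determined by its values on the arrows, and compatibility with the idempotents forces $\rho(a)=\rho(h(a))\,\rho(a)\,\rho(t(a))$ to lie in the block $h(a)\Mat_n(B)t(a)\cong\Hom_B(B^{\alpha_{t(a)}},B^{\alpha_{h(a)}})$, i.e. to be an $\alpha_{h(a)}\times\alpha_{t(a)}$ matrix over $B$. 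Hence $\Hom_{\nn_\k}(k Q,\Mat_n(B))$ is naturally in bijection with $\prod_{a\in Q_1}B^{\,\alpha_{h(a)}\alpha_{t(a)}}$, which is represented by the polynomial ring $V_\alpha(k Q):=k[\xi^a_{ij}\,:\,a\in Q_1,\ 1\le i\le\alpha_{h(a)},\ 1\le j\le\alpha_{t(a)}]$ together with the universal representation $\pi_{k Q}$ sending each arrow $a$ to the generic matrix $X_a=(\xi^a_{ij})$ placed in the $(h(a),t(a))$-block of $\Mat_n(V_\alpha(k Q))$. Spelled out, this says precisely that $\rho\mapsto\Mat_n(\rho)\cdot\pi_{k Q}$ is a natural bijection, which is the claim for $A=k Q$.

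Next I would impose the relations. Fix a generating set $\{r_i\,|\,i\in J\}$ of $J$ with $r_i\in h(r_i)\,k Q\,t(r_i)$, and let $p_{i,kl}\in V_\alpha(k Q)$ denote the entries of the matrix $\pi_{k Q}(r_i)\in\Mat_n(V_\alpha(k Q))$; these are polynomials in the $\xi^a_{ij}$. Set $V_\alpha(A):=V_\alpha(k Q)\big/\bigl(p_{i,kl}\,:\,i\in J,\ k,l\bigr)$. The key observation is that a $\k$-algebra map $\rho:k Q\to\Mat_n(B)$ factors through the quotient $A=k Q/J$ if and only if $\rho(r_i)=0$ for all $i$, and under the bijection of the first step such $\rho$ correspond exactly to the $k$-algebra maps $V_\alpha(k Q)\to B$ annihilating every $p_{i,kl}$, i.e. to $k$-algebra maps $V_\alpha(A)\to B$. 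Since $\pi_{k Q}(r_i)$ maps to zero in $\Mat_n(V_\alpha(A))$, the composite $k Q\to\Mat_n(V_\alpha(k Q))\to\Mat_n(V_\alpha(A))$ annihilates $J$ and descends to $\pi_A:A\to\Mat_n(V_\alpha(A))$; unwinding the identifications yields the desired natural isomorphism
\[
\Hom_{\cc_k}(V_\alpha(A),B)\ \xrightarrow{\ \cong\ }\ \Hom_{\nn_\k}(A,\Mat_n(B)),\qquad \rho\mapsto\Mat_n(\rho)\cdot\pi_A .
\]

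What remains is routine: naturality in $B$ is automatic because every map in sight is evaluation of polynomials, functoriality of $V_\alpha(-)$ in $A$ then follows formally, and $\mathrm{Spec}\,V_\alpha(A)$ is independent of the chosen presentation since a different generating set of $J$ produces the same ideal of $V_\alpha(k Q)$ --- any $s\in J$ is a two-sided $k Q$-combination of the $r_i$, so the entries of $\pi_{k Q}(s)$ are $V_\alpha(k Q)$-linear combinations of the $p_{i,kl}$. I do not expect a real obstacle here; the whole proof is the universal property of polynomial rings combined with the free-plus-relations presentation of $k Q/J$, and the one point deserving a little care is fixing the identification of the block $h(a)\Mat_n(B)t(a)$ with rectangular matrices via the splitting $B^\alpha=\bigoplus_{v\in Q_0}B^{\alpha_v}$, so that the generic matrices $X_a$ sit in the correct off-diagonal positions of $\Mat_n$.
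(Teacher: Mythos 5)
Your proposal is correct, and it is essentially the argument the paper is pointing to: the paper gives no proof of this lemma, only the citation to Procesi's generic-matrices construction ``extended to quivers,'' and your block-generic-matrices polynomial ring $V_\alpha(kQ)$ followed by quotienting by the entries of $\pi_{kQ}(r_i)$ is exactly that extension. The one cosmetic point worth noting is that the displayed isomorphism in the statement should read $\Hom_{\cc_k}(V_\alpha(A),B)$ rather than $\Hom_{\cc_k}(V_n(A),B)$, as your construction correctly has it.
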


\begin{definition}
We denote $\raa :=$ Spec\,$V_\alpha(A).\,$ It is considered as a
$k$-scheme.
\end{definition}

The scheme $\raa$ is also known as {\it the scheme of} $\alpha${\it -dimensional} $A${\it -modules}.

\begin{remark}\label{bothrep} Any path algebra with relations $A=k Q/I$ can also be seen as the quotient of a free algebra: $A\cong F/J$, so it makes sense to define both $\mathrm{Rep}^n_A$ 
and $\mathrm{Rep}_A^\alpha$. It is known that there is the following relation between the two
\[
 \mathrm{Rep}^n_A = \coprod_{|\alpha|=n} \mathrm{Rep}^\alpha_A \times_{\GL_\alpha} \GL_n
\]
where the action of $\GL_\alpha$ on $\GL_n$ is by multiplication.
\end{remark}

\begin{examples}\label{commuting}

\n
\item[1.] If $A=F,\,$ then $\mathrm{Rep}_F^n (k) \cong \Mat_n(k)^m$ (because a free algebra is the path algebra of a quiver with one vertex, a dimension vector in this case is just a number $n$).
\item[2.] If $A=F/J,\,$ the $B$-points of  $\ran$ can be described in the following way:
\[
\ran(B)= \{(X_1,\dots,X_m) \in \Mat_n(B)^m \ : \ f(X_1,\dots, X_m)=0\; \mbox{for all} \, f \in J \}.
\]
The scheme $\ran$ is a closed subscheme of $\mathrm{Rep}_F^n.$

\item[3.] If $A=\CC [x,y],\,$ then
\[
\ran (\CC) \ = \ \{(M_1,\, M_2) \ : \ M_1, \ M_2 \in \Mat_n(\CC) \ \mbox{and} \ M_1M_2 = M_2 M_1 \}
\]
is the \textit{commuting scheme}, see \cite{V3}.
\item[4.] If $A=k Q,\,$ then $\raa (k)\cong \bigoplus_{a \in Q_0}\Mat_{\alpha_{h(a)}\times \alpha_{t(a)}}(k).\,$
For each arrow $a$, $\rho(a)$ is an $n\times n$ matrix with zeros everywhere except on a block of size $\alpha_{h(a)}\times \alpha_{t(a)}$.
\end{examples}

\medskip
\n
Because $A$ is finitely generated, $\ran$ is of finite type. Note that $\ran$ may be quite complicated. It is not reduced in general and it seems to be hopeless 
to describe the coordinate ring of its reduced structure.

\subsection{Principal bundles over Nori-Hilbert schemes}
Fix $A \in \mathcal N _k.$

\begin{definition}\label{defuan}
For each $B\in \cc_k$, consider the set
\begin{equation*}
{\mathcal U}^{n}_A(B) \ = \ \{(\rho,v) \in \ran(B) \times  \mathbb{A}^n_k(B) \ : \
\rho(A)(Bv)=B^n\}.
\end{equation*}
The assignment $B\mapsto {\mathcal U}^{n}_A(B)$ is functorial in $B$ and the corresponding functor is representable by a scheme $\uan$ which is an open subscheme in  $\ran \times  \mathbb{A}^n_k .$ 
\end{definition}

\begin{remark}\label{cyclic}
Note that points $\rho\in\ran$ such that there is a $v\in\mathbb{A}^n_k $ with $(\rho,v)\in\uan$ correspond to $n$-dimensional cyclic $A$-modules.
\end{remark}

Let $\GL_{n}$  be the general linear group scheme over $k\,$ whose $B$-points form the group $ \GL_{n}(B)$ of invertible matrices in $\Mat _n (B).$

\begin{definition}\label{actions}
 Given $ B \in  \mathcal{C}_k,\,$ $\GL_{n}(B)$ acts on $\ran(B)$:
\begin{equation*}
\begin{matrix}
\GL_{n}(B) \times \ran(B) &  \longrightarrow & \ran(B) \\
(g,\rho) & \longrightarrow & \rho ^g \ : \ \rho^g(a)= g\rho(a)g^{-1} ,\, \\
\end{matrix}
\end{equation*}
and on $\ran(B) \times \mathbb{A}^n_k(B)$: 
\begin{equation*}
\begin{matrix}
\GL_{\alpha}(B) \times \ran(B) \times \mathbb{A}^n_k(B) &  \longrightarrow & \ran (B) \times _k \mathbb{A}^n_k(B) \\
(g,\rho,v) & \longrightarrow & ( \rho ^g,gv).
\end{matrix}
\end{equation*}
The open subscheme $\uan$ is clearly closed under the action above.
\end{definition}

\begin{remark}\label{isorho}
The $A$-module structures induced  on $B^n$ by two representations $\rho$ and $\rho '$ are
isomorphic if and only if there exists $g \in \GL_{n}(B)$ such that $\rho
' = \rho ^g .$
\end{remark}

\begin{definition}\label{modrep}
We denote by $\ran// \GL_{n}=\mathrm{Spec}\,V_n(A)^{\GL_{n}(k)}\,$ the
categorical quotient (in the category of $k$-schemes) of $\ran$ by
$\GL_{n}.\,$ It is the \textit{(coarse) moduli space of $n$-dimensional representations} of $A$.
\end{definition}

\noindent
There is the following
\begin{theorem}(\cite[Theorem 5.5.]{V})\label{bund}
The scheme $\uan/\GL_n$ represents $\Hilb_A^n$ and $\uan\to \Hilb_A^n$ is an universal categorical quotient and a $\GL_n$-principal bundle.
Therefore the scheme $\Hilban$ is smooth iff $\uan $ is smooth.
\end{theorem}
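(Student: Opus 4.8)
The plan is to exhibit $\uan\to\Hilb_A^n$ as a Zariski-locally trivial $\GL_n$-torsor: I would write down an explicit natural transformation to the Hilbert functor, check that its fibres are free $\GL_n$-orbits, produce Zariski-local sections, and then read off both the quotient description and the smoothness equivalence as formal consequences.

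\smallskip

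\textbf{Step 1: the map to the Hilbert functor.} First I would, for $B\in\cc_k$ and $(\rho,v)\in{\mathcal U}_A^n(B)$, use $\rho$ to regard $B^n$ as a left $A\otimes_k B$-module and consider the $B$-linear map $\psi_{\rho,v}\colon A\otimes_k B\to B^n$, $a\otimes b\mapsto b\,\rho(a)v$. This is a homomorphism of $A\otimes_k B$-modules, and the defining condition of $\uan$ in Definition \ref{defuan} makes it surjective, so $I_{(\rho,v)}:=\ker\psi_{\rho,v}$ is a left ideal with $(A\otimes_k B)/I_{(\rho,v)}\cong B^n$ free of rank $n$; hence $I_{(\rho,v)}\in{\mathcal Hilb}_A^n(B)$. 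Functoriality in $B$ is immediate, so this defines a morphism $\Phi\colon\uan\to\Hilb_A^n$.

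\smallskip

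\textbf{Step 2: fibres are free orbits.} Next I would note that $\psi_{\rho^g,gv}=g\circ\psi_{\rho,v}$ for $g\in\GL_n(B)$, so $\Phi$ is $\GL_n$-invariant, and conversely that $I_{(\rho,v)}=I_{(\rho',v')}$ forces a $B$-linear isomorphism $g\colon B^n\to B^n$ with $g\circ\psi_{\rho,v}=\psi_{\rho',v'}$; evaluating at $1\otimes1$ and at $a\otimes1$ then yields $(\rho',v')=(\rho^g,gv)$, which is just Remark \ref{isorho} together with tracking the cyclic vector. Taking $(\rho',v')=(\rho,v)$ shows the stabilisers are trivial, since a $g$ commuting with all $\rho(a)$ and fixing the cyclic generator $v$ must be the identity. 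Thus the $\GL_n$-action on $\uan$ is free with orbits exactly the fibres of $\Phi$.

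\smallskip

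\textbf{Step 3: local sections.} Then, given $I\in{\mathcal Hilb}_A^n(B)$ with $M=(A\otimes_k B)/I$ projective of rank $n$ and cyclic (generated by the class of $1$), I would pass to a Zariski cover $\mathrm{Spec}\,B=\bigcup\mathrm{Spec}\,B_i$ trivialising $M$, pick $B_i$-bases $\phi_i\colon B_i^{\,n}\xrightarrow{\ \sim\ }M\otimes_B B_i$, transport the $A$-action to get $\rho_i\in\ran(B_i)$, and set $v_i:=\phi_i^{-1}(\text{class of }1)$. Then $(\rho_i,v_i)\in{\mathcal U}_A^n(B_i)$ is a cyclic pair with $\Phi(\rho_i,v_i)=I|_{B_i}$, so $\Phi$ admits sections Zariski-locally on the base.

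\smallskip

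\textbf{Step 4: conclusion.} Steps 2 and 3 say that $\Phi$ is a Zariski-locally trivial $\GL_n$-principal bundle. Since ${\mathcal Hilb}_A^n$ is a Zariski sheaf (it is a subfunctor of the Grassmannian functor), the local trivialisations $\uan\times_{\Hilb_A^n}\mathrm{Spec}\,B_i\cong\GL_n\times\mathrm{Spec}\,B_i$ glue to identify $\Hilb_A^n$ with the quotient $\uan/\GL_n$, and a torsor is automatically a universal categorical (indeed geometric) quotient, stable under base change. Finally, because $\Phi$ is Zariski-locally of the form $U\times\GL_n\to U$ with $\GL_n$ smooth over $k$, and smoothness is local on source and target, $\uan$ is smooth if and only if $\Hilb_A^n$ is.

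\smallskip

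\textbf{Where the work is.} The main obstacle is Step 3 feeding into Step 4: one must verify that the locally chosen $(\rho_i,v_i)$ really patch into a torsor and, crucially, that the resulting quotient \emph{represents} ${\mathcal Hilb}_A^n$ itself rather than merely its Zariski sheafification --- this rests on ${\mathcal Hilb}_A^n$ already being a sheaf and on $\GL_n$ being a ``special'' group, so that working in the Zariski rather than the \'etale topology suffices.
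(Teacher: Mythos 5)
Your argument is correct. Note that the paper itself gives no proof of this statement: it is quoted verbatim from \cite[Theorem 5.5]{V}, and your torsor construction (the evaluation map $a\otimes b\mapsto b\rho(a)v$ onto the Hilbert functor, freeness of the $\GL_n$-action on cyclic pairs, Zariski-local sections coming from local freeness of the rank-$n$ projective module $(A\otimes_k B)/I$, and descent of smoothness along the locally trivial bundle) is essentially the standard proof found in that reference. The only steps you compress are routine: in Step 2 one propagates $g\rho(a)v=\rho'(a)gv$ to the matrix identity $g\rho(a)g^{-1}=\rho'(a)$ using that the elements $b\rho(a)v$ span $B^n$, and in Step 3 one uses flatness of $M$ over $B$ to identify $I\otimes_B B_i$ with $\ker\psi_{\rho_i,v_i}$; both are exactly as you indicate.
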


\smallskip
\n
Let now $A=k Q/J$ and $\alpha$ any dimension vector with $|\alpha|=n.$
Identify the $B$-points of the $n$-dimensional affine scheme $\mathbb{A}^n_k$  with the elements of the module $B^\alpha$. Denote $\GL_{\alpha}$ the group scheme over $k$ whose $B$-points form the group $\GL_{\alpha}(B)$ of invertible matrices in $\Mat_\alpha(B)=\End_\k(B^\alpha)$. 

We can define in the same way as before an open subscheme $\uaa$ in $\raa \times  \mathbb{A}^{\alpha}_k $ and actions of $\GL _{\alpha}$ on $\raa$ and on $\raa \times  \mathbb{A}^{\alpha}_k .$

\smallskip
\noindent
Theorem \ref{bund} can be easily generalized as follows  
\begin{corollary}\label{bunda}
The scheme $\uaa/\GL_\alpha$ represents $\Hilb_A^\alpha$ and $\uaa\to \Hilb_A^\alpha$ is an universal categorical quotient and a $\GL_\alpha$-principal bundle.
Therefore the scheme $\Hilbaa $ is smooth iff $\uaa $ is smooth.
\end{corollary}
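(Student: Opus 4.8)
The plan is to mimic the proof of Theorem \ref{bund} verbatim, replacing the integer $n$ by the dimension vector $\alpha$ and the group $\GL_n$ by $\GL_\alpha$ throughout, and to observe that every ingredient used in the $n$-dimensional case survives this substitution. First I would set up the analogue of Remark \ref{isorho} in the graded setting: two $\k$-algebra homomorphisms $\rho,\rho'\colon A\to\Mat_\alpha(B)$ induce isomorphic $A$-module structures on $B^\alpha$ (compatibly with the fixed $\k$-structure) if and only if $\rho'=\rho^g$ for some $g\in\GL_\alpha(B)$; this is immediate since a $\k$-linear isomorphism of $B^\alpha$ is exactly an element of $\Mat_\alpha(B)^{\times}=\GL_\alpha(B)$. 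Next I would record the analogue of Remark \ref{cyclic}: a point $\rho\in\raa$ admits $v$ with $(\rho,v)\in\uaa$ precisely when the corresponding module on $B^\alpha$ is cyclic. With these in place, the functor ${\mathcal Hilb}_A^\alpha$ from Section \ref{defnori} sends $B$ to the set of left ideals $I\subset A\otimes_k B$ with $(A\otimes_kB)/I$ projective of rank $\alpha$ as a $\k\otimes B$-module, and choosing a $\k\otimes B$-linear splitting $(A\otimes_kB)/I\cong B^\alpha$ identifies this set with $\uaa(B)/\GL_\alpha(B)$, functorially; this is the content of "$\uaa/\GL_\alpha$ represents $\Hilb_A^\alpha$".

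The second step is to upgrade this set-level statement to the assertions that $\uaa\to\Hilb_A^\alpha$ is a universal categorical quotient and a $\GL_\alpha$-principal bundle. For the principal bundle property it suffices, by fppf (indeed Zariski-)local triviality, to produce local sections: over the locus where the cyclic vector $v$ together with its first $n=|\alpha|$ iterates under a fixed generating set of $A$ span $B^\alpha$ one gets, after the usual reordering, an explicit section, and these loci cover $\uaa$. The freeness of the $\GL_\alpha$-action on $\uaa$ follows from the "only if" direction of the isomorphism criterion above: if $\rho^g=\rho$ and $gv=v$ with $v$ cyclic, then $g$ fixes a generating set and hence $g=1$. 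Given freeness and local sections, $\uaa\to\uaa/\GL_\alpha$ is a $\GL_\alpha$-torsor, and since $\GL_\alpha=\prod_v\GL_{\alpha_v}$ is a smooth affine (special) group scheme the quotient is a universal categorical quotient; faithfully flat descent then shows the quotient scheme represents $\Hilb_A^\alpha$.

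Finally, smoothness of $\Hilb_A^\alpha$ is equivalent to smoothness of $\uaa$: a $\GL_\alpha$-principal bundle is a smooth surjective morphism, and smoothness descends and ascends along such morphisms. This gives the last sentence of the corollary. The one point that genuinely needs the quiver hypothesis $A=kQ/J$ rather than just $A=F/J$ is the very definition of $\raa$, $\uaa$ and the $\GL_\alpha$-action, namely that $A$ is a $\k$-algebra with $\k\cong k^{Q_0}$ so that $\k\otimes B$-projective modules of the prescribed rank are exactly the $B^\alpha$; once that is granted, the entire argument is a formal transcription. I expect the only mildly delicate point to be bookkeeping the $\k$-module compatibilities when writing down the local sections — making sure the reordering of the spanning iterates respects the vertex grading — but this is routine and contains no real obstacle, which is why the statement is phrased as a corollary.
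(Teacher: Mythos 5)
Your proposal is correct, but it takes a different and considerably longer route than the paper. The paper's own proof is a two-line formal reduction to Theorem \ref{bund}: by Remark \ref{bothrep} one has $\mathrm{U}^n_A=\coprod_{|\alpha|=n}\mathrm{U}^\alpha_A\times_{\GL_\alpha}\GL_n$, and taking $\GL_n$-quotients gives $\Hilb^n_A\cong \mathrm{U}^n_A/\GL_n=\coprod_{|\alpha|=n}\mathrm{U}^\alpha_A/\GL_\alpha$; matching this with the decomposition $\Hilb^n_A=\coprod_{|\alpha|=n}\Hilb^\alpha_A$ identifies each $\mathrm{U}^\alpha_A/\GL_\alpha$ with $\Hilb^\alpha_A$, and the universal-quotient and principal-bundle properties are inherited from Theorem \ref{bund} through the associated-bundle construction $-\times_{\GL_\alpha}\GL_n$. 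You instead re-prove Theorem \ref{bund} from scratch in the $\k$-graded setting: the $\GL_\alpha$-conjugacy criterion for isomorphism, freeness of the action on cyclic pairs, explicit Zariski-local sections from spanning sets of path-iterates of the cyclic vector, and faithfully flat descent to handle the fact that a rank-$\alpha$ projective $\k\otimes B$-module is only locally isomorphic to $B^\alpha$ (a point you correctly flag rather than elide). Your argument is sound and has the virtue of being self-contained and of making explicit exactly which graded bookkeeping is needed, but it essentially duplicates the content of \cite[Theorem 5.5]{V}; the paper's reduction buys brevity at the cost of leaving implicit that the cyclicity locus and the torsor property transfer formally across the decomposition of Remark \ref{bothrep}. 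Either way the final smoothness equivalence follows, as you say, from smooth descent along the bundle map.
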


\begin{proof}
 By Remark \ref{bothrep} it follows
\[
 \mathrm{U}^n_A = \coprod_{|\alpha|=n} \mathrm{U}^\alpha_A \times_{\GL_\alpha} \GL_n
\]
and hence
\[
\Hilb^n_A \cong \mathrm{U}^n_A/\GL_n = \coprod_{|\alpha|=n} \mathrm{U}^\alpha_A /{\GL_\alpha}.
\]
\end{proof}

\begin{remark}\label{imsmooth} Consider the forgetful map $\ran \times  \mathbb{A}^n_k \longrightarrow \ran ,\,$ which sends $(\rho , v)$ to $\rho$.
The existence of a cyclic vector is an open condition, so the image of $\mathrm{U}_A^n$ is an open subset of $\ran$
and the preimage of $\rho \in \ran$ is an open subset of $\{\rho\}\times \mathbb{A}^n_k$. This implies that 
\begin{equation}
\dim T_{(\rho,v)} \mathrm{U}_A^n = \dim T_{\rho} \mathrm{\ran} +\dim \mathbb{A}^n_k
\end{equation}
and $\uan$ is smooth if and only if its image in $\ran$ is smooth.
Analogously, $\uaa$ is smooth if and only if so is its image through the forgetful map $\raa \times  \mathbb{A}^{\alpha}_k \longrightarrow \raa .$ 
\end{remark}

\medskip
These results lead us to study the local geometry of $\ran $ and $\raa .$ For general algebras this study is quite hard, but we are interested in a special class of algebras: $2$-Calabi-Yau algebras.

\section{Calabi-Yau algebras}\label{CYalg}
Calabi-Yau algebras have been defined by V. Ginzburg in \cite{G2} and R. Bocklandt in \cite{Bock} following the notion of Calabi-Yau triangulated category introduced by Kontsevich.
For alternative approaches and further reading see  \cite{Am}, \cite{IR}, \cite{K} and \cite{K-Re}. 
We first recall the following 

\begin{definition}(\cite[Definition 20.6.1]{G1})
An algebra $A$ is called {\it homologically smooth} if $A$ has
a finite resolution by finitely-generated projective (left) $A^e$-modules.
\end{definition}

\begin{definition}\label{CY}(\cite[Definition 3.2.3]{G2})
A homologically smooth algebra $A$ is {\em  $d-$Calabi-Yau ($d-$CY} for short) if there are $A^e$-module isomorphisms 
\begin{equation*}
\Ext ^i_{A^e}(A,A^e)\cong 
\begin{cases} 
A & {\text{if }} i = d \\
0 &  {\text{if }} i \neq d.
\end{cases}
\end{equation*}
\end{definition}

\newcommand{\Tr}{\mathrm{Tr}}
We note some properties of Calabi-Yau algebras.

\begin{proposition}
If $A$ is  $d-$CY, then
\begin{enumerate}
\item The global dimension of $A$ is  $\ \leq d$.
\item If there exists a nonzero finite-dimensional $A$-module $M,$ then the global dimension of A is exactly $d$.
\item If $M,N \in A\amod $ are finite-dimensional,  then
\[
\Ext^i_A(M,N)\cong \Ext^{d-i}_A(N,M)^*.
\]
\item
For every finite 
dimensional $A$-module $M$ there is a trace map $\Tr_M: \Ext^{d}(M,M)\to k$, compatible
with the product of $\Ext$'s: $\Tr_N(f g) = (-1)^{i(d-i)} \Tr_M(gf)$ for $f \in \Ext^i(M,N)$ and $g \in \Ext^{d-i}(N,M)$.
\end{enumerate}
\end{proposition}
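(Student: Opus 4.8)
The plan is to prove the four assertions in the order (1), (3), (2), (4), since once the Van den Bergh-type duality of (3) is available, (2) and (4) are short formal consequences, and the bulk of the work lies in (1) and (3).

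\emph{Step 1, proof of (1).} Fix a finite resolution $P_\bullet\to A$ by finitely generated projective $A^e$-modules, which exists by homological smoothness, and let $n=\mathrm{pd}_{A^e}A$. A standard argument gives $\Ext^n_{A^e}(A,A^e)\neq 0$: if $0\to P_n\xrightarrow{\partial}P_{n-1}\to\cdots$ has length $n\geq 1$, then $\partial$ is not a split monomorphism (otherwise the resolution could be shortened), so $\mathrm{id}_{P_n}$ is not in the image of $\Hom_{A^e}(P_{n-1},P_n)\to\Hom_{A^e}(P_n,P_n)$; hence $\Ext^n_{A^e}(A,P_n)\neq 0$, and since $P_n$ is a summand of a free module and $A$ is a finitely generated $A^e$-module, $\Ext^n_{A^e}(A,A^e)\neq 0$. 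The Calabi-Yau condition then forces $n=d$. Finally, for any left $A$-module $M$, the $P_j$ restrict to flat right $A$-modules and $A^e\otimes_A M\cong A\otimes_k M$ is a free left $A$-module, so $P_\bullet\otimes_A M$ is a projective $A$-resolution of $M$ of length $\leq n=d$; thus $\mathrm{gldim}\,A\leq d$.

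\emph{Step 2, proof of (3).} Let $M,N\in A\amod$ be finite-dimensional; by (1) and the previous construction $M$ and $N$ are perfect over $A$. I would assemble the chain of natural isomorphisms
\[
\mathbf{R}\Hom_A(M,N)\;\cong\;\mathbf{R}\Hom_{A^e}\!\bigl(A,\Hom_k(M,N)\bigr)\;\cong\;\bigl(A\otimes^{\mathbf{L}}_{A^e}\Hom_k(M,N)\bigr)[-d],
\]
where the first is the standard adjunction (with $\Hom_k(M,N)$ carrying the bimodule structure $(afb)(m)=af(bm)$) and the second uses that $A$ is perfect over $A^e$ together with the Calabi-Yau identification $\mathbf{R}\Hom_{A^e}(A,A^e)\cong A[-d]$. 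Writing $\Hom_k(M,N)\cong N\otimes_k M^*$ as bimodules, the right-hand side becomes $(M^*\otimes^{\mathbf{L}}_A N)[-d]$; taking $k$-duals (exact over a field, and both sides have finite-dimensional cohomology) and using the adjunction $(M^*\otimes^{\mathbf{L}}_A N)^*\cong\mathbf{R}\Hom_A(N,M^{**})=\mathbf{R}\Hom_A(N,M)$ yields $\mathbf{R}\Hom_A(M,N)\cong\mathbf{R}\Hom_A(N,M)^*[-d]$. Passing to cohomology in degree $i$ gives $\Ext^i_A(M,N)\cong\Ext^{d-i}_A(N,M)^*$.

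\emph{Step 3, proofs of (2) and (4).} For (2): if $M\neq0$ is finite-dimensional then $\mathrm{id}_M\in\End_A(M)$ is nonzero, so by (3) $\Ext^d_A(M,M)\cong\End_A(M)^*\neq0$; hence $\mathrm{pd}_A M\geq d$, and with (1) this gives $\mathrm{pd}_A M=d$ and $\mathrm{gldim}\,A=d$. For (4): define $\Tr_M\colon\Ext^d_A(M,M)\to k$ as evaluation at $\mathrm{id}_M$ under the isomorphism $\Ext^d_A(M,M)\cong\Ext^0_A(M,M)^*=\End_A(M)^*$ of (3). The asserted identity $\Tr_N(fg)=(-1)^{i(d-i)}\Tr_M(gf)$ then amounts to the claim that the two pairings
\[
\Ext^i_A(M,N)\times\Ext^{d-i}_A(N,M)\to k,\qquad (f,g)\mapsto\Tr_N(fg)\ \text{ and }\ (f,g)\mapsto\Tr_M(gf)
\]
coincide up to the Koszul sign $(-1)^{i(d-i)}$; this is exactly the graded symmetry of the duality of (3), which I would deduce from the cyclic symmetry of the canonical class in $\mathrm{HH}_d(A,A)$ under the identifications of Step 2.

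\emph{Main obstacle.} The technical heart is Step 2: one must make the derived isomorphisms genuinely natural in $M$ and $N$, so that they are compatible with Yoneda composition (which is what (4) depends on), and keep careful track of the finiteness hypotheses justifying each step. The genuinely delicate point is the sign $(-1)^{i(d-i)}$ in (4): extracting it requires tracking Koszul signs through the adjunction, the $\otimes^{\mathbf{L}}$–$\mathbf{R}\Hom$ interchange, and the cyclicity of Hochschild chains — equivalently, verifying that the shift $[d]$ on the bounded derived category of finite-dimensional $A$-modules is a Serre functor in Kontsevich's sense, not merely that the underlying graded vector spaces agree. Parts (1) and (2) are routine given this machinery.
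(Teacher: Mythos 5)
Your proposal cannot be matched against the paper's argument line by line, because the paper does not actually prove this proposition: it declares all four statements standard and refers the reader to \cite{Am1}, \cite{BerTa} and \cite{Bock}. What you have written is, in substance, the standard Van den Bergh duality argument contained in those references, and it is essentially correct: a length-minimal finitely generated projective $A^e$-resolution forces $\mathrm{pd}_{A^e}A=d$ via the nonvanishing of $\Ext^{\mathrm{pd}}_{A^e}(A,A^e)$; applying $-\otimes_A M$ to that resolution (the terms being projective right $A$-modules and $A^e\otimes_A M$ free over $A$) bounds the global dimension; and the chain $\Ext^i_A(M,N)\cong \mathrm{H}^i(A,\Hom_k(M,N))\cong \mathrm{H}_{d-i}(A,N\otimes_k M^*)\cong \mathrm{Tor}_{d-i}^A(M^*,N)\cong \Ext^{d-i}_A(N,M)^*$ gives (3), from which (2) and the existence of the trace are formal. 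Two points deserve tightening. First, in Step 1 you should justify that a resolution of length exactly $\mathrm{pd}_{A^e}A$ with \emph{finitely generated} terms exists: truncate the given finite resolution at the appropriate syzygy, which is finitely generated as a quotient of the next term and projective because the projective dimension equals the truncation degree. Second, part (4) is the one place where your argument is genuinely incomplete rather than merely compressed: defining $\mathrm{Tr}_M$ as evaluation at $\mathrm{id}_M$ is fine, but the identity $\mathrm{Tr}_N(fg)=(-1)^{i(d-i)}\mathrm{Tr}_M(gf)$ requires showing that all the duality isomorphisms of (3) are induced by a single cyclically symmetric class (equivalently, that $[d]$ is a Serre functor on the bounded derived category of finite-dimensional modules, compatibly with Yoneda composition), and you only announce this. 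Since that is precisely the part for which the paper leans on \cite{BerTa} and \cite{Bock}, your level of detail matches the paper's, but a self-contained write-up would have to either carry out the sign and naturality bookkeeping or cite it explicitly.
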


\begin{proof}
These are standard results, see for example \cite[Proposition 2.4.]{Am1}, \cite[Section 2]{BerTa} or \cite[Prop.2.2]{Bock} for proofs.
\end{proof}

\begin{examples}\label{2cyex}

\n
\begin{enumerate}
\item[1.] The polynomial algebra $k[x_1,...,x_n]$ is $n-$CY.
\item[2.] Let $X$ be an affine smooth Calabi-Yau variety (i.e. the canonical sheaf is trivial) of dimension $n.$ Then $\CC [X]$ is $n-$CY.
\item[3.]
If $Q$ is a quiver, denote by $\overline{Q}$ the {\em double} quiver of $Q$ obtained by adjoining an arrow $a^*: j \to i$ for each arrow $a: i \to j $ in $Q.$  The {\em preprojective} algebra is the associative algebra   
\[
\Pi(Q):=k(\overline{Q})/<\sum_{a\in Q_1}[a, a^*]>
\]
where $[x,y]=xy-yx$ denotes the commutator.
If $A$ is any path algebra with homogeneous relations for the path length grading, then $A$ is $2$-CY if and only if
$A$ is the preprojective algebra of a non-Dynkin quiver (see \cite[Theorem 3.2.]{Bock}).

\item[4.] Let $k [\pi_1(M)]$ be the group algebra of the fundamental group of a compact aspherical orientable manifold $M$ of dimension $n.$  
Kontsevich proves that $k[\pi_1(M)]$  is $n-$CY 
(see \cite[Corollary 6.1.4.]{G2}). This algebra is not positively graded.
Thus, if $S$ is a surface of genus $g\ge 1,$ the algebra 
$A_g:=k[\pi_1(S)] $ is $2$-Calabi-Yau. 
The fundamental group $\pi_1(S)$ has presentation
\begin{equation}\label{pres}
<X_1,Y_1,\dots,X_g,Y_g | X_1Y_1X_1^{-1}Y_1^{-1}\dots X_gY_gX_g^{-1}Y_g^{-1}=1>.
\end{equation}
\end{enumerate}
\end{examples}

\section{Local geometry}\label{locgeoran}

Let $A=k Q/J$ be the path algebra of a quiver with relations and $\alpha$ a fixed dimension vector with $|\alpha|=n$. A point $x \in \raa (k)$ corresponds to a pair $(M,\mu)$ where $M\cong k^\alpha$ has an $A$-module structure given by the $\k$-algebra homomorphism 
 $\mu:A\to  \Mat_n(k) \cong \End_k(M).\,$ The linear representation $\mu$ makes $\End_k(M)$ an $A^e$-module.

We write $M$ for a point $x$ in $\raa (k)$ and  $T_{M}\raa$ to denote the tangent space to $\raa$ at $x$ and to stress the dependence on $M.$

\begin{proposition}\cite[12.4.]{G1}\label{tgrep}
For $M\in\raa(k)$ 
\[T_{M}\raa\cong \mathrm{Der_\k}(A,\End_k(M)).\]
\end{proposition}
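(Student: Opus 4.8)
The plan is to identify the tangent space $T_M\raa$ with the set of $B$-points of $\raa$ lying over $M$, where $B = k[\epsilon]/(\epsilon^2)$ is the ring of dual numbers, and then unwind what such a point is. By Lemma \ref{rep}, a $B$-point of $\raa$ reducing to $M$ is a $\k$-algebra homomorphism $\tilde\mu\colon A\to \Mat_n(B)=\End_k(M)\otimes_k B$ whose reduction mod $\epsilon$ is $\mu$. Writing $\tilde\mu(a)=\mu(a)+\epsilon\,\delta(a)$ with $\delta(a)\in\End_k(M)$, the condition that $\tilde\mu$ is $k$-linear (and $\k$-linear) forces $\delta$ to be $k$-linear (resp.\ to vanish on $\k$), and the multiplicativity $\tilde\mu(ab)=\tilde\mu(a)\tilde\mu(b)$, after expanding and using $\epsilon^2=0$ together with $\mu(ab)=\mu(a)\mu(b)$, becomes exactly
\begin{equation*}
\delta(ab) = \mu(a)\,\delta(b) + \delta(a)\,\mu(b),
\end{equation*}
which is the Leibniz rule for a derivation $A\to \End_k(M)$ with respect to the bimodule structure on $\End_k(M)$ induced by $\mu$. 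Conversely any such $\delta$ gives a homomorphism $\tilde\mu$. This sets up a bijection, and one checks it is compatible with the $k$-vector space structures, yielding the isomorphism $T_M\raa\cong \mathrm{Der}_\k(A,\End_k(M))$.

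Concretely I would proceed in the following steps. First, recall the standard identification $T_M\raa = \{\,\phi\in\raa(B) : \phi|_{\epsilon=0}=M\,\}$, which follows from the functor-of-points description of the tangent space and the fact that $\raa$ represents the functor $B\mapsto \Hom_{\nn_\k}(A,\Mat_n(B))$ (Lemma \ref{rep}); here I use $\Mat_n(B)=\Mat_n(k)\otimes_k B$ and $\End_k(M)\cong\Mat_n(k)$. Second, write an arbitrary lift as $\tilde\mu=\mu+\epsilon\delta$ and translate the axioms of a $\k$-algebra homomorphism into conditions on $\delta$: $k$-linearity of $\tilde\mu$ gives $k$-linearity of $\delta$; $\tilde\mu|_\k$ being the fixed structure map gives $\delta|_\k=0$; unitality is automatic; and multiplicativity gives the Leibniz identity above once one notes that the $A$-bimodule (i.e.\ $A^e$-module) structure on $\End_k(M)$ is precisely $a\cdot f\cdot b = \mu(a)f\mu(b)$. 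Third, observe the correspondence $\tilde\mu\leftrightarrow\delta$ is additive and respects scalar multiplication, so it is an isomorphism of $k$-vector spaces, and note $\mathrm{Der}_\k(A,\End_k(M))$ is by definition the space of such $\delta$.

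There is essentially no serious obstacle here; this is a routine dual-numbers computation, and the only points requiring care are bookkeeping ones: making sure the $\k$-linearity condition is handled correctly (so that we get $\mathrm{Der}_\k$, derivations relative to the subalgebra $\k$, rather than all derivations), and being explicit that the $A^e$-module structure on $\End_k(M)$ entering the definition of a derivation is the one induced by $\mu$ as stated in the paragraph preceding the proposition. One should also remark that the result is stated for $A=kQ/J$ a path algebra with relations but the argument is identical in the ungraded case $A\cong F/J$ with $\k=k$, recovering the usual statement $T_M\ran\cong\mathrm{Der}_k(A,\End_k(M))$; the reference \cite[12.4.]{G1} covers the general principle.
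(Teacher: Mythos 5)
Your argument is correct and is essentially identical to the paper's own proof: both identify $T_M\raa$ with dual-number points of $\raa$ over $M$ via Lemma \ref{rep}, write a lift as $\mu+\epsilon\delta$, and read off the Leibniz rule and the vanishing on $\k$ from multiplicativity. Your version just spells out the bookkeeping (linearity, the $A^e$-module structure induced by $\mu$) a bit more explicitly.
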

\begin{proof}
An element $p\in T_M\raa$  corresponds to a morphism of $\k$-algebras $q:A\to \Mat_n(k[\epsilon])$ such that $q(a)=\theta(a)\epsilon + \mu(a)$ for all $a\in A,$
 where $\mu:A\to \End_k(M)$ is the $\k$-algebra morphism associated to $M.$
 By using $q(ab)=q(a)q(b)$ one can easily see that $\theta\in\mathrm{Der_\k}(A,\End_k(M))$ and $\theta(\k)=0$. On the other hand, for all $\theta\in \mathrm{Der_\k}(A,\End_k(M))$, 
 the pair $(\theta, \mu)$ gives a point of $T_M\raa$ in the obvious way.
\end{proof}

\n
Let now  $M\in \raa(k).\,$ It is easy to check (see \cite[5.4.]{G1}) that we have the following exact sequence
\[
0\to \Ext^0_A(M,M)\to \End_\k(M)\to \mathrm{Der_\k}(A,\End_k(M))\to \Ext_A^1(M,M)\to 0
\]
and therefore
\begin{eqnarray*}
 \dim_k T_M \raa  &=& \dim_k \mathrm{Der}_\k(A, \End_k(M))\\
&{=}&\alpha^2+\dim_k(\Ext^1_A(M,M))  - \dim_k(\Ext^0_A(M,M))
\end{eqnarray*}
where $\alpha^2$ stands for the inner product of $\alpha$ with itself: 
\[\alpha^2= \sum_{v \in Q_0}\alpha_v^2= \dim_k\End_\k(M) = \dim_k(\Mat_\alpha(k)).
\]

\n
The above computation specializes to 
\begin{equation}\label{euler}
\dim_kT_{M}\ran=n^2+\dim_k(\Ext^1_A(M,M))-\dim_k(\Ext^0_A(M,M))
\end{equation}
and, therefore, the local dimension of the representation spaces $\raa$ and $\ran$ is controlled by the  dimensions of $\Ext _A^0$ and $\Ext _A^1.$ 

\bigskip

If $A$ is $2$-CY admitting a suitable resolution, one can actually say more.

\begin{theorem}\label{main1}
Let $A$ be a $2$-CY and $ F_{\bullet}$ be a resolution by finitely-generated projective
(left) $A^e$-modules. Suppose that the functions 
\[
c^i \ : \ \ran (k) \longrightarrow  \NN , \;\;\;\;\; M \longmapsto  c^i_M:=\dim _k (\Hom _{A^e} (F_i,\End _k(M))
\]
are locally constant for $i=0,1,2.$ Then the dimension of the tangent space $T_M \ran$ is an increasing function of $\dim_k(\End _A(M))$ on the irreducible components of $\ran (k).$ 
\end{theorem}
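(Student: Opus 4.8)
The plan is to compute $\dim_k T_M\ran$ from the chosen projective bimodule resolution $F_\bullet$ and to exploit the Calabi--Yau duality to turn the answer into something monotone in $\dim_k(\End_A(M))$. First I would apply $\Hom_{A^e}(-,\End_k(M))$ to the resolution $F_\bullet$; since $\End_k(M)\cong M\otimes_k M^*$ as an $A^e$-module and $\Hom_{A^e}(A,M\otimes_k M^*)\cong \End_A(M)$, the cohomology of the complex $\Hom_{A^e}(F_\bullet,\End_k(M))$ computes $\Ext^i_{A^e}(A,\End_k(M))\cong\Ext^i_A(M,M)$. Because $A$ is $2$-CY, this complex is concentrated in degrees $0,1,2$ up to the length of the resolution, and the key point is that the higher terms $\Ext^i_A(M,M)$ vanish for $i\geq 3$ (global dimension $\leq 2$), so only $F_0,F_1,F_2$ contribute to the relevant cohomology. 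Writing $c^i_M=\dim_k\Hom_{A^e}(F_i,\End_k(M))$ and $e^i_M=\dim_k\Ext^i_A(M,M)$, the Euler characteristic of the truncated complex gives
\[
e^0_M-e^1_M+e^2_M \;=\; c^0_M-c^1_M+c^2_M \;+\;(\text{correction from }F_i,\ i\geq 3),
\]
but since the $\Ext$'s above degree $2$ vanish, one gets after a short diagram chase an exact relation among $c^0,c^1,c^2$ and $e^0,e^1,e^2$; I would be careful here about whether the resolution has length exactly $2$ or longer, handling the general case by noting the alternating sum of the $c^i$ for $i\geq 3$ is forced.

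Next I would invoke the Calabi--Yau symmetry from the Proposition: $\Ext^2_A(M,M)\cong\Ext^0_A(M,M)^* = \End_A(M)^*$, so $e^2_M=e^0_M=\dim_k\End_A(M)$. Feeding this into the Euler characteristic identity, the $\Ext^1$ term and one copy of $\Ext^0$ combine so that
\[
e^1_M \;=\; 2e^0_M \;-\;\bigl(c^0_M-c^1_M+c^2_M\bigr)
\]
(up to the sign conventions and the resolution-length bookkeeping above). Combining this with the tangent space formula \eqref{euler}, $\dim_k T_M\ran = n^2 + e^1_M - e^0_M$, yields
\[
\dim_k T_M\ran \;=\; n^2 \;+\; e^0_M \;-\;\bigl(c^0_M-c^1_M+c^2_M\bigr)\;=\;n^2 + \dim_k\End_A(M) - \bigl(c^0_M-c^1_M+c^2_M\bigr).
\]

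Finally I would use the hypothesis that each $c^i$ is locally constant on $\ran(k)$ for $i=0,1,2$: on any irreducible component of $\ran(k)$, and hence on any connected component, the quantity $c^0_M-c^1_M+c^2_M$ is constant. Therefore on each irreducible component, $\dim_k T_M\ran$ differs from $\dim_k\End_A(M)$ by the fixed constant $n^2-(c^0-c^1+c^2)$, so it is (weakly) increasing as a function of $\dim_k\End_A(M)$ — in fact it equals that constant plus $\dim_k\End_A(M)$, which is the strong form of the claim. I expect the main obstacle to be the homological bookkeeping in the first paragraph: justifying carefully that applying $\Hom_{A^e}(-,\End_k(M))$ to $F_\bullet$ computes $\Ext^i_A(M,M)$ (this uses that $\End_k(M)$ is the image of $M$ under the functor $-\otimes_A(M\otimes_k M^*)$ and an adjunction/Eilenberg--Watts argument), together with controlling the contribution of the tail $F_i$, $i\geq 3$, of the resolution so that the Euler characteristic identity genuinely closes up. The Calabi--Yau duality and the local-constancy step are then formal.
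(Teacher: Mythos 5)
Your route is essentially the paper's: apply $\Hom_{A^e}(-,\End_k(M))$ to $F_\bullet$, identify the cohomology of the resulting complex with $\Ext^\bullet_A(M,M)$ via Hochschild cohomology, use the $2$-CY duality $\dim_k\Ext^2_A(M,M)=\dim_k\End_A(M)$ together with $\dim_k T_M\ran=n^2+\dim_k\Ext^1_A(M,M)-\dim_k\Ext^0_A(M,M)$, and conclude that $\dim_k T_M\ran-\dim_k\End_A(M)$ is locally constant. When the resolution has length $2$ your bookkeeping is correct and your final formula $\dim_k T_M\ran=n^2+\dim_k\End_A(M)-(c^0_M-c^1_M+c^2_M)$ coincides with the paper's (there $k^2_M=c^2_M$ since $d^2=0$).

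The genuine gap is precisely the point you flag and then dismiss with ``the alternating sum of the $c^i$ for $i\ge 3$ is forced.'' That is not an argument, and it is the only non-formal step. If the resolution is longer than $2$, your Euler-characteristic identity reads $2e^0_M-e^1_M=\sum_{i\ge 0}(-1)^ic^i_M$, and the hypothesis gives no control on $c^i_M$ for $i\ge 3$. What exactness of the tail actually gives you is $\sum_{i\ge 3}(-1)^ic^i_M=k^2_M-c^2_M$, where $k^2_M=\dim_k\ker d^2_M$; so the whole problem reduces to showing that $k^2_M$ is locally constant, and that does not follow from Euler-characteristic counting alone. The paper closes this as follows: the vanishing of $\Ext^3_A(M,M)$ (global dimension $\le 2$) yields $k^3_M+k^2_M=c^2_M$ by rank--nullity; since $c^2$ is locally constant by hypothesis and the kernel dimensions $k^2,k^3$ are upper semicontinuous, neither can jump, so both are locally constant, and the final formula $\dim_k T_M\ran=n^2-c^0_M+c^1_M-k^2_M+\dim_k\End_A(M)$ has locally constant ``constant term.'' You should replace your tail bookkeeping by this semicontinuity argument (or, equivalently, work throughout with the individual rank--nullity relations $h^i_M=k^i_M+k^{i-1}_M-c^{i-1}_M$ as the paper does, so that only $k^2_M$ survives into the final expression). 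The rest of your proposal --- the identification of the complex with Hochschild cochains, the CY duality, and the local-constancy conclusion --- is correct and matches the paper.
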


\begin{proof}
To compute the dimension of the tangent space $T_M \ran$ at $M \in \ran (k)$ we need to compute the groups $\Ext ^i_{A}(M,M),\, i=0,1.\,$ We use the isomorphisms
\begin{equation}\label{hoch}
\Ext ^i_{A}(M,M)\cong \mathrm{H}^i(A, \End _{k} (M))
\end{equation}
(see \cite[Corollary 4.4.]{C-E}) where $\mathrm{H}^i(A, \End _{k} (M))$ denotes the Hochschild cohomology with coefficients in $\End _{k} (M).$ Note also that 
\[
\End_A(M)\cong\Ext ^0_{A}(M,M) \cong \Hom _{A^e} (A,\End _k(M)).
\]
Take the resolution $ F_{\bullet}$ of $A$ and consider the associate complex
\begin{equation*}
\begin{array}{ll}
0 & \longrightarrow \Hom _{A^e} (A,\End _k(M)) \longrightarrow \Hom _{A^e} (F_0,\End_k(M))\stackrel{d_{M}^0}{\longrightarrow}\\
&\\
&\longrightarrow \Hom _{A^e} (F_1,\End _k(M))
\stackrel{d_{M}^1}{\longrightarrow} \Hom _{A^e} (F_2,\End _k(M)) \stackrel{d_{M}^2} {\longrightarrow}... 
\end{array}
\end{equation*}
Set 
\begin{equation}\label{hiki}
\begin{array}{ll}
k^i \ : \ \ran (k) \longrightarrow  \NN , \;\;\;\;\; M \longmapsto  k^i_M:=\dim_k\ker d^i_{M}\\
h^i \ : \ \ran (k) \longrightarrow  \NN , \;\;\;\;\; M \longmapsto  h^i_M:=\dim_k\Ext ^i_{A}(M,M).
\end{array}
\end{equation}
The following relations hold by the rank-nullity theorem
\begin{equation}\label{hipres}
\begin{cases}
h^{i}_M=k^{i}_M+k^{i-1}_M-c_M^{i-1} \\
h_{M}^{0} = k^{0}_M
\end{cases}
\end{equation}
Recall that $\dim T_M\ran=n^2 +h_M^{1}-h_M^{0}$  (see \ref{euler}), but,  since $A$ is $2$-CY,  we have $h^0_M=h^2_M$, so that 
\begin{equation}\label{car}
\begin{array}{lll}
\dim T_M\ran &=& n^2 +h_M^{1}-h_M^{2}\\
&=&n^2+(k^{1}_M+k^{0}_M-c_M^{0})-(k^{2}_M+k^{1}_M-c_M^{1}) \\
&=&n^2 - c_M^{0}+c_M^{1}-k^{2}_M + h^{0}_M .\\
\end{array}
\end{equation}
The algebra $A$ has global dimension  $2,\,$ therefore  $h^{3} \equiv 0$ on $\ran (k).$  From (\ref{hipres}) it follows then that $k^{3}_M+k^{2}_M=c_M^{2}.$ The function $c^{2}$ is locally constant, so 
by observing that the functions $k^i$ are (locally) upper semicontinuous, it follows that the functions $k^{3}$ and $k^{2}$ are locally constant as well.
Therefore by (\ref{car}) one has that $\dim T_M\ran = N + h^{0}_M$ where $N$ is locally constant.
\end{proof}

The two main examples of Calabi-Yau algebras under consideration fit into this picture.

\begin{proposition}\label{finfreeag}
Let $S$ be a compact orientable surface $S$ of genus $g.$ The algebra   
$A_g=k [\pi_1(S)]$ admits a finite free resolution.
\end{proposition}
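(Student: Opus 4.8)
The plan is to construct an explicit free resolution of $A_g$ as an $A_g^e$-module by using the group-theoretic presentation of $\pi_1(S)$ in \eqref{pres}. Write $G=\pi_1(S)$ with the $2g$ generators $X_1,Y_1,\dots,X_g,Y_g$ and the single defining relator $R=X_1Y_1X_1^{-1}Y_1^{-1}\cdots X_gY_gX_g^{-1}Y_g^{-1}$. Since $S$ is a $K(G,1)$ with a CW-structure having one $0$-cell, $2g$ $1$-cells and one $2$-cell, the cellular chain complex of the universal cover gives a free resolution of the trivial $\ZZ G$-module $\ZZ$, and tensoring up to $k$ yields a finite free resolution of $k$ over $kG=A_g$ of length $2$. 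The standard way to turn a resolution of the trivial module over a group algebra into a resolution of the algebra over its enveloping algebra is to observe that $A_g^e=kG\otimes_k kG^{\op}\cong k[G\times G]$, and $A_g$ as a bimodule is $k[G]$ with $G\times G$ acting by left and right translation; via the isomorphism $G\times G\cong G\times G$, $(g,h)\mapsto (g, g h^{-1})$ one identifies $A_g$ as an $A_g^e$-module with the induced module $k[(G\times G)/\Delta G]$, i.e. with $k$ (trivial $G$-module for the diagonal) induced up along $\Delta G\hookrightarrow G\times G$. Induction is exact and sends finitely generated free modules to finitely generated free modules, so applying $k[G\times G]\otimes_{k\Delta G}-$ to the length-two free resolution of $k$ over $k\Delta G$ produces the desired finite free resolution of $A_g$ over $A_g^e$.

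Concretely, I would write down the resolution in the Fox-calculus form, which is the cleanest for later use (e.g. in computing the $c^i_M$ of Theorem~\ref{main1}):
\begin{equation*}
0\longrightarrow A_g^e \xrightarrow{\ d_2\ } (A_g^e)^{2g} \xrightarrow{\ d_1\ } A_g^e \xrightarrow{\ \mu\ } A_g\longrightarrow 0,
\end{equation*}
where $\mu$ is the multiplication map, $d_1$ sends the $i$-th basis vector to $X_i\otimes 1 - 1\otimes X_i$ (and similarly for $Y_i$), and $d_2$ is given by the Fox derivatives $\partial R/\partial X_i$, $\partial R/\partial Y_i$ of the single relator, suitably interpreted as elements of $A_g^e$. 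That $d_1 d_2=0$ and $\mu d_1=0$ is the fundamental identity of Fox calculus; exactness in the middle and on the left follows from the fact that $S$ is aspherical of dimension $2$ (equivalently, from the freeness and finite length of the corresponding resolution of $k$ coming from the cell structure of the universal cover, which is $\RR^2$).

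The main obstacle is not any single hard computation but assembling the standard pieces carefully: verifying that the Fox-derivative maps do assemble into $A_g^e$-module maps, that the complex is exact (which is where asphericity of $S$, i.e. $\pi_2(S)=0$, enters and is essential — for $g=0$ it fails and for $g\ge 1$ it holds), and that all terms are \emph{finitely generated} free, using that $G$ is finitely presented with a single relator. I would cite Kontsevich's result (\cite[Corollary 6.1.4.]{G2}) or the classical fact that a compact aspherical $n$-manifold has a finite free resolution of $\ZZ$ over $\ZZ\pi_1$ of length $n$ to guarantee homological smoothness and the $2$-CY property, then note that the explicit bimodule resolution above realizes this concretely. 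An alternative, essentially equivalent, route is to invoke that $A_g$ is presented by one relation so Anick's or the Bardzell-type resolution applies, but the topological/Fox-calculus construction is the most transparent and is what I would present.
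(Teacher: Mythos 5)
Your argument is correct, and at bottom it rests on the same mechanism as the paper's proof: a finite cell structure on the aspherical surface $S$ gives, via the chain complex of the universal cover, a finite free bimodule resolution of $A_g$. The difference is that the paper simply cites Davison's construction, which starts from a simplicial complex homeomorphic to $S$ and yields $F_i=A_g\otimes k^{d_i}\otimes A_g$ with $d_i$ the number of non-degenerate $i$-simplices, whereas you work with the minimal CW structure (one $0$-cell, $2g$ $1$-cells, one $2$-cell), pass from the resolution of the trivial module $k$ over $kG$ to a bimodule resolution by inducing along the diagonal $\Delta G\hookrightarrow G\times G$, and make the differentials explicit via Fox calculus. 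What your route buys is the minimal resolution $0\to A_g^e\to (A_g^e)^{2g}\to A_g^e\to A_g\to 0$, from which the constancy of the functions $c^i$ needed for Theorem \ref{main1} is immediate ($c^0=c^2=n^2$, $c^1=2gn^2$); the paper's version only records that the $F_i$ are finitely generated free, which suffices. One small imprecision to fix: the map $(g,h)\mapsto(g,gh^{-1})$ is not a group automorphism of $G\times G$ unless $G$ is abelian, so you should not phrase the identification that way. What you actually need, and what is true, is that the orbit map $(g,h)\mapsto gh^{-1}$ identifies $(G\times G)/\Delta G$ with $G$ equivariantly for the two-sided translation action, giving $A_g\cong k[G\times G]\otimes_{k\Delta G}k$ as a bimodule; exactness of induction (i.e., freeness of $k[G\times G]$ as a $k\Delta G$-module) then carries the length-two resolution of $k$ to the desired bimodule resolution. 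You are also right that exactness, in particular injectivity of $d_2$, uses asphericity and hence $g\ge 1$; for $g=0$ the proposition is trivially true since $A_0=k$.
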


\begin{proof}
A resolution $F _{\bullet}$  is provided by Davison in the proof of \cite[Theorem 5.2.2.]{Da}:
\[
0 \longrightarrow F_2 \longrightarrow F_1 \longrightarrow F_0  \longrightarrow A_g \longrightarrow 0 \ 
\]
where $F_i=A_g\otimes k^{d_i} \otimes A_g$ and $d_i$ is the number  of non-degenerated $i$-th dimensional simplices in a simplicial complex $\Delta$ homeomorphic to $S.$
\end{proof}
Since the $F _i$'s are finitely generated and free, the functions $c^i=n^2 \dim_k F_i$ are constant.

\begin{proposition}\label{finfreegr}
The preprojective algebra of a non-Dynkin quiver $A=\Pi(Q)$ admits a resolution by finitely-generated projective
$A^e$-modules $ F_{\bullet}$ such that the functions $c^i$ are constant.
\end{proposition}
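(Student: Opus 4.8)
The plan is to exhibit an explicit resolution of $\Pi(Q)$ by finitely generated projective $A^e$-modules and then observe that the resolving modules are in fact \emph{free}, so that the functions $c^i$ become constant for the same reason as in Proposition~\ref{finfreeag}. The preprojective algebra is quadratic and Koszul over $\k=\oplus_{v\in Q_0}kv$, so the natural candidate is the complex coming from the Koszul bimodule resolution. First I would recall, following Bocklandt~\cite{Bock} and Crawley-Boevey~\cite{CB}, that for $A=\Pi(Q)$ with $Q$ non-Dynkin there is a bimodule resolution of the form
\begin{equation*}
0\longrightarrow A\otimes_\k \k \otimes_\k A \longrightarrow A\otimes_\k k\overline{Q}_1 \otimes_\k A \longrightarrow A\otimes_\k \k \otimes_\k A \longrightarrow A\longrightarrow 0,
\end{equation*}
where $k\overline{Q}_1$ is the span of the arrows of the double quiver and the outer terms use the vertex space $\k$; the maps are given by the obvious multiplication, by the differential $a\mapsto a\otimes e_{h(a)} - e_{t(a)}\otimes a$ on arrow generators, and by the relation $\sum_{a\in Q_1}[a,a^*]$. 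This is precisely the resolution underlying the $2$-CY property established in \cite[Theorem 3.2]{Bock}.

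Next I would pass from these $\k$-bimodule-based terms to honest $A^e$-modules. Each term has the shape $A\otimes_\k V\otimes_\k A$ for a finite-dimensional $\k$-bimodule $V$ (namely $V=\k$ or $V=k\overline{Q}_1$). The point is that $A\otimes_\k V\otimes_\k A$ is a finitely generated projective $A^e=A\otimes_k A^{\op}$-module: it is a direct summand (indeed, after choosing bases of the idempotent-decomposed pieces $e_iVe_j$, a direct sum of shifts) of a finite direct sum of copies of $A\otimes_k A^{\op}$, since $\k$ is separable over $k$. Concretely, $A\otimes_\k V\otimes_\k A\cong\bigoplus_{i,j\in Q_0}\dim_k(e_iVe_j)\cdot\, (Ae_i\otimes_k e_jA)$, and each $Ae_i\otimes_k e_jA$ is a projective summand of $A^e$. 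Thus $F_\bullet$ is a finite resolution of $A$ by finitely generated projective $A^e$-modules, as required by the hypotheses of Theorem~\ref{main1}.

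Finally I would verify the statement about the $c^i$. For a module $M\in\mathrm{Rep}_{\Pi(Q)}^\alpha(k)$ we must compute $c^i_M=\dim_k\Hom_{A^e}(F_i,\End_k(M))$. Using $F_i=\bigoplus_{i,j} m^{(i)}_{jk}\,(Ae_j\otimes_k e_kA)$ with constants $m^{(i)}_{jk}=\dim_k(e_jV_ie_k)$ depending only on $Q$, and the adjunction $\Hom_{A^e}(Ae_j\otimes_k e_kA,\End_k(M))\cong \Hom_k(e_j M\otimes? )$ — more precisely $\Hom_{A^e}(Ae_j\otimes_k e_kA, N)\cong e_j N e_k$ for any bimodule $N$ — one gets $\Hom_{A^e}(Ae_j\otimes_k e_kA,\End_k(M))\cong \Hom_k(M e_k, M e_j)$, which has dimension $\alpha_j\alpha_k$. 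Hence $c^i_M=\sum_{j,k} m^{(i)}_{jk}\,\alpha_j\alpha_k$ is independent of the chosen module $M$ of dimension vector $\alpha$, and is in particular locally constant on $\mathrm{Rep}_A^\alpha(k)$; assembling over all $\alpha$ with $|\alpha|=n$ gives local constancy on $\mathrm{Rep}_A^n(k)$. The main obstacle is not any single step but bookkeeping: one must be careful that the differentials in the Koszul resolution genuinely land in the free $A^e$-module descriptions (i.e. that the identifications $A\otimes_\k V\otimes_\k A\cong\bigoplus(Ae_i\otimes_k e_jA)$ are compatible with the maps), and one must double-check the Hom-adjunction above with the left/right module conventions fixed in Section~\ref{not}; once these identifications are in place the constancy of $c^i$ is immediate.
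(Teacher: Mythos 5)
Your proof is correct and essentially identical to the paper's: the Koszul bimodule resolution you write down is, after decomposing each term $A\otimes_\k V\otimes_\k A$ into the summands $Ae_i\otimes_k e_jA$, exactly the standard resolution from \cite[Remark 4.5.]{Bock} that the paper uses, and the key computation $\dim_k\Hom_{A^e}(Ae_i\otimes_k e_jA,\End_k(M))=\dim_k(e_i\End_k(M)e_j)=\alpha_i\alpha_j$ is the same. The only (harmless) point worth keeping is the one you already make: the $c^i$ are constant on each $\mathrm{Rep}^\alpha_A$ and hence locally constant on $\mathrm{Rep}^n_A$, which is all that Theorem \ref{main1} requires.
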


\begin{proof}
Here we follow \cite{Bock}.
 Consider the standard projective resolution given in \cite[Remark 4.5.]{Bock} 
\[
 \bigoplus _{i\in Q_0} F_{ii}   \longrightarrow  \bigoplus _{(a,a^*)} F_{t(a)h(a)}\oplus F_{t(a^*)h(a^*)} \longrightarrow   \bigoplus _{i\in Q_0} F_{ii}\stackrel{m} {\longrightarrow}  A 
\]
where  $F_{ij}:=Ai\otimes jA$ and $i,j \in Q_0.$ The crucial observation now is that, if $M \in \ran (k),$ then
\[
\dim _k(\Hom _{A^e} (F_{ij},\End _k(M)))=\dim_k  (i  \End _k(M)  j)=\alpha_i\alpha_j.
\]
This means that the dimensions $\dim _k (\Hom _{A^e} (F_{ij},\End _k(M)) $ are constant.
\end{proof}

\subsection{Proof of Theorem \ref{main}}\label{mainteodim}
We start with two preliminary lemmas. 

Let $ A\in \mathcal N_k .$
We say that a (left) ideal $I$ of $A$ is of codimension $n$ if $\dim_k(A/I)=n.$

Recall (see (\ref{hiki})) that  $h^i_M=\dim_k\Ext ^i_{A}(M,M)$ for $M \in \ran (k)$.

\begin{lemma}\label{bil}
Let $A$ be an associative $k$-algebra. A codimension $n$ ideal $I \subset A$ is two-sided if and only if $h_{A/I}^0=n.\,$   
\end{lemma}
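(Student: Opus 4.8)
The plan is to translate the two-sidedness of $I$ into a statement about $\End_A(A/I)$, and then invoke the Calabi-Yau–flavoured bookkeeping that identifies $h^0_{A/I}=\dim_k\End_A(A/I)$. Set $M=A/I$, a cyclic left $A$-module of dimension $n$, generated by the image $\bar 1$ of $1$. First I would observe the general fact that for a cyclic module $M=A/I$ one has a natural injection $\End_A(M)\hookrightarrow M$ sending $\phi\mapsto\phi(\bar 1)$; this is injective because $\bar 1$ generates $M$, and its image is exactly $\{\,\bar a\in M : I\bar a=0\,\}=\{\bar a: Ia\subseteq I\}$, i.e.\ the image of the idealizer (largest subring of $A$ in which $I$ is a two-sided ideal) modulo $I$. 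Hence $\dim_k\End_A(M)\le\dim_k M=n$, with equality if and only if $I\bar a=0$ for every $\bar a\in M$, equivalently $Ia\subseteq I$ for all $a\in A$, equivalently $IA\subseteq I$. Since $I$ is already a left ideal, $IA\subseteq I$ is precisely the statement that $I$ is two-sided.

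The remaining step is to match $\dim_k\End_A(M)$ with $h^0_M$. This is immediate from the identifications recorded just before: $h^0_M=\dim_k\Ext^0_A(M,M)=\dim_k\End_A(M)$ (and, as noted in the proof of Theorem \ref{main1}, $\End_A(M)\cong\Hom_{A^e}(A,\End_k(M))$, though we do not even need the $A^e$-picture here). Combining the two steps: $h^0_{A/I}=\dim_k\End_A(A/I)\le n$ always, and $h^0_{A/I}=n$ exactly when $I$ is two-sided. That is the claim.

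I should be slightly careful that the lemma as stated does not assume $A$ is Calabi-Yau — it is a lemma about an arbitrary associative $k$-algebra — so the argument must not secretly use the $2$-CY hypothesis; the idealizer argument above is purely elementary and does not, so this is fine. The only mild subtlety, and the one place worth spelling out rather than waving at, is the claim that $\phi\mapsto\phi(\bar 1)$ identifies $\End_A(A/I)$ with $\{\bar a: Ia\subseteq I\}$: surjectivity onto that set holds because, given $a$ with $Ia\subseteq I$, right multiplication by $a$ on $A$ descends to a well-defined $A$-linear endomorphism of $A/I$ sending $\bar 1\mapsto\bar a$. I do not anticipate a genuine obstacle here — the content is entirely in the elementary identification of module endomorphisms of a cyclic module with its idealizer — so the "hard part" is really just stating the idealizer description cleanly.
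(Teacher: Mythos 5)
Your proof is correct and is essentially the paper's own argument: both reduce the statement to the identification $\End_A(A/I)\cong \mathcal I/I$ with $\mathcal I$ the idealizer of $I$, and then compare dimensions to conclude $\mathcal I=A$ exactly when $h^0_{A/I}=n$. You merely spell out the isomorphism $\phi\mapsto\phi(\bar 1)$ explicitly where the paper quotes it as a known fact.
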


\begin{proof}
If $I$ is two-sided, then $h^0_{A/I}=n.\,$
Let now $I$ be such that $h^0_{A/I}=n.\,$  We have $End _A(A/I)= \mathcal I /I$ where $\mathcal I$ is the idealizer of $I,\,$ that is the subalgebra of $A$ which is maximal among those algebras where $I$ is two-sided. Therefore, $I \subset \mathcal I \subset A$ and $\mathcal I /I \cong A/I .\,$ This implies $\mathcal I =A\,$ and $I$ two-sided. 
\end{proof}

\begin{lemma}
If $A=A_g$ and $g>1, $
for all $n\geq 1$ there is $I\in\Hilbang(\CC)$ which is a two-sided ideal.
\end{lemma}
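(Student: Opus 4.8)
The plan is to construct, for each $n \geq 1$, an explicit finite-dimensional quotient algebra of $A_g = \CC[\pi_1(S)]$ of dimension $n$, whose kernel will be the desired two-sided ideal. A two-sided ideal $I$ of codimension $n$ is the kernel of a surjective algebra homomorphism $A_g \twoheadrightarrow B$ with $\dim_\CC B = n$; since $A_g$ is a group algebra, such homomorphisms correspond to $n$-dimensional representations of the group $\pi_1(S)$ that generate (as an algebra) a subalgebra of $\mathrm{Mat}_n$ of full dimension — equivalently, one wants a quotient algebra of dimension exactly $n$. The cleanest source of such quotients is abelian quotients: the abelianization of $\pi_1(S)$ is $\ZZ^{2g}$, so any finite quotient group $G$ of $\ZZ^{2g}$ of order $n$ gives $A_g \twoheadrightarrow \CC[G]$, hence a two-sided ideal of codimension $n$. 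Since $g > 1$ we have $2g \geq 4 \geq 1$, so $\ZZ^{2g}$ surjects onto $\ZZ/n\ZZ$ for every $n \geq 1$, and $\CC[\ZZ/n\ZZ]$ has dimension $n$. This already settles the lemma.

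First I would recall the presentation \eqref{pres} of $\pi_1(S)$ and note that sending $X_1 \mapsto t$, $Y_1 \mapsto 1$, $X_i \mapsto 1$, $Y_i \mapsto 1$ for $i \geq 2$, where $t$ is a generator of $\ZZ/n\ZZ$, is a well-defined group homomorphism $\pi_1(S) \to \ZZ/n\ZZ$ because the single relator $\prod [X_i, Y_i]$ is a product of commutators, hence maps to the identity in the abelian group $\ZZ/n\ZZ$. Next I would observe this homomorphism is surjective, so it induces a surjective $\CC$-algebra homomorphism $A_g \to \CC[\ZZ/n\ZZ]$. Then I would set $I := \ker(A_g \to \CC[\ZZ/n\ZZ])$, which is by construction a two-sided ideal with $\dim_\CC(A_g/I) = \dim_\CC \CC[\ZZ/n\ZZ] = n$, i.e. $I$ is of codimension $n$. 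Finally, to land in $\Hilbang(\CC)$ as stated, I would invoke the identification (from Section \ref{nhs}, Remark \ref{cyclic} and the surrounding discussion) of codimension-$n$ left ideals with cyclic $n$-dimensional modules: the module $A_g/I \cong \CC[\ZZ/n\ZZ]$ is cyclic (generated by $1$), hence $I$ defines a point of $\mathrm{Hilb}_{A_g}^n(\CC)$, and by Lemma \ref{bil} it is two-sided precisely because $h^0_{A_g/I} = \dim_\CC \End_{A_g}(A_g/I) = \dim_\CC \CC[\ZZ/n\ZZ] = n$ (the algebra $\CC[\ZZ/n\ZZ]$ being commutative acts on itself by endomorphisms of full dimension).

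There is essentially no serious obstacle here: the construction is elementary once one notices that abelian quotients of $\pi_1(S)$ suffice and that $g > 1$ is more than enough to surject onto any cyclic group. The only point requiring a line of care is confirming that the resulting $I$ genuinely has codimension exactly $n$ (not less) — this follows since $\CC[\ZZ/n\ZZ]$ is $n$-dimensional over $\CC$ and the map $A_g \to \CC[\ZZ/n\ZZ]$ is surjective, so $A_g/I \cong \CC[\ZZ/n\ZZ]$ as algebras. One could alternatively phrase the whole argument representation-theoretically using $n$ commuting diagonalizable matrices (images of the $X_i, Y_i$) with a cyclic vector, but the group-algebra-quotient formulation is shorter and makes the two-sidedness manifest. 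I would also remark in passing that this gives many such ideals (one for each abelian quotient of order $n$), which foreshadows the reducibility-free smoothness analysis: these two-sided ideals correspond to the non-simple representations that obstruct smoothness of $\mathrm{Hilb}_{A_g}^n$ for $n > 1$.
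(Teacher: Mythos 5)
Your proof is correct and follows essentially the same route as the paper: both arguments exploit the fact that the surface relator is a product of commutators to map $A_g$ onto a commutative algebra (killing all but one or two generators) and then take the kernel of a further surjection onto an $n$-dimensional quotient. The only difference is cosmetic — the paper factors through $A_1\cong\CC[x,y]$ and picks a point of $\mathrm{Hilb}^n_{A_1}$, whereas you factor through $\CC[\ZZ/n\ZZ]$, which is a slightly more explicit choice of the same kind of target.
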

\begin{proof}
Recall that (see \ref{pres})
\begin{equation*}
A_g=\CC [<X_1,Y_1,\dots,X_g,Y_g | X_1Y_1X_1^{-1}Y_1^{-1}\dots X_gY_gX_g^{-1}Y_g^{-1}=1>]
\end{equation*}
so that $A_1\cong \CC[x,y].$ Let $J$ be a $\CC $-point in $\mathrm{Hilb}^n _{A_1} $. Consider the following composition
\[ 
A_g \stackrel{\alpha}{\longrightarrow} A_1 \stackrel{\pi}{\longrightarrow} A_1/J 
\]
where  $\alpha$ maps $X_1$ to $x,\,$ $Y_1$ to $y$ and all the others $X_i$ and $Y_i$ to $1.$
The map $\pi$ is the quotient map. 
Let $I$ be the kernel of the composition $\pi\alpha,$ which is onto, then $A_g/I\cong A_1/J\cong \CC^n,$ since $J\in \mathrm{Hilb}^n _{A_1}(\CC).$
Therefore $I$ is a two-sided ideal in $\Hilbang(\CC).$ 
\end{proof}

\medskip
\n
{\em Proof of Theorem \ref{main}}
In \cite{RBC} it is shown that
$\mathrm{Rep}_{A_g}^n$ is irreducible for every $n\,$ of dimension 
\[
\dim \mathrm{Rep}_{A_g}^n = \begin{cases}
(2g-1)n^2 + 1 & \text{if}  \;\; g>1\\
n^2+n & \text{if} \;\; g=1
\end{cases}
\]
We know that $\mathrm{U}_{A_g}^n$ is open in $\mathrm{Rep}_{A_g}^n\times \mathbb{A}^n_k$ and hence also irreducible
with dimension $(2g-1)n^2 + 1+n$. By Theorem \ref{bund}, $\Hilbang$ must also be irreducible
with dimension $(2g-1)n^2 + 1+n - n^2 =(2g-2)n^2+n+1,\,$ if $g>1.$
The argument in \cite[p.25]{RBC} shows that there exist simple representations of $A_g$ for any dimension when $g>1.$
Since simple modules are cyclic, to a simple $n$-dimensional representation of $A_g$ corresponds a point in the image of $\mathrm{U}_{A_g}^n $ by the forgetful map
$\mathrm{Rep}_{A_g} \times  \mathbb{A}^n_k \longrightarrow \mathrm{Rep}_{A_g}.$ The module $M$ corresponding to such  a point has $\dim_k(\End _{A_g}(M))$ minimal. 
We also know by Lemma \ref{bil} that $\Hilbang $ contains $k$-points corresponding to two-sided ideals, so these points correspond to modules with $\dim_k(\End _{A_g}(M))=n.$ 
Thus, we can say that the image of $\mathrm{U}_{A_g}^n $ by the forgetful map
contains $k$-points where the dimension of the tangent space is different, since $A_g$ verifies hypotheses of Theorem \ref{main1}.  
We conclude that $\mathrm{U}_{A_g}^n $  is not smooth, or equivalently, $\Hilbang$ is not smooth. 
\qed

\subsection{Proof of Theorem \ref{main2}}\label{mainteodim2}

\newcommand{\Rep}{\mathrm{Rep}}
\newcommand{\id}{{1\!\!1}}
Through this section $A=\Pi(Q),$ the preprojective algebra of a connected quiver $Q,$ see Ex.\ref{2cyex}.3.

The situation for $\mathrm{Hilb}^\alpha _{\Pi(Q)}$ is a bit more complicated.

First of all $\mathrm{Hilb}^\alpha _{\Pi(Q)}$ might not be irreducible. Take for instance $Q=\circ \to \circ$ with dimension vector $(1,1)$.
In this case $\Rep^\alpha_{\Pi(Q)}$ is the union of $2$ intersecting lines and all representations are cyclic, so $\mathrm{Hilb}^\alpha _{\Pi(Q)}$ is not smooth.
If we take the dimension vector $(1,2)$ then  $\Rep^\alpha_{\Pi(Q)}$ is the union of two planes intersecting in the zero representation, 
so it is still not irreducible. The zero representation is not cyclic so $U^\alpha_{\Pi(Q)}$ is smooth and hence so is  $\mathrm{Hilb}^\alpha _{\Pi(Q)}$.
If we take the dimension vector $(1,3),$ then $\Rep^\alpha_{\Pi(Q)}$ is the union of two 3-dimensional spaces intersecting in the zero representation, but now
no representation is cyclic so $\mathrm{Hilb}^\alpha _{\Pi(Q)}$ is empty.

To avoid these pathologies, we will restrict to the case where $\Rep^\alpha_{\Pi(Q)}$ contains simple representations. 
The quivers and dimension vectors for which there exist simple representations have been characterized by Crawley-Boevey in \cite{CB}.
The main ingredient to state the results is a quadratic form on the space of dimension vectors:
\[
p(\alpha) = 1-\alpha\cdot \alpha + \sum_{a\in Q_1} \alpha_{h(a)}\alpha_{t(a)}.
\]
We also need the notion of a positive root. This is a dimension vector $\alpha$ for which $\Rep^\alpha _Q$   
has indecomponible representations (we use the shorthand $\Rep^\alpha _Q$ for $\Rep^\alpha _{kQ}$).
If $\alpha$ is a positive root then $p(\alpha)$ equals the dimension of $\Rep^\alpha _Q/\!\!/\GL_\alpha$. We call a positive root
real if $p(\alpha)=0$ and imaginary if $p(\alpha)>0$. In particular the elementary dimension vector $\epsilon_v$, which is one in vertex $v$ 
and zero in the other vertices, is a real positive root if $v$ has no loops and an imaginary positive root if it has loops. 
Dynkin quivers have no imaginary roots, extended Dynkin quivers have precisely one imaginary
root $\delta$, with $p(\delta)=1$. If $Q$ is not Dynkin or extended Dynkin we will call it wild.

\begin{theorem}[Crawley-Boevey]\cite[Theorem 1.2]{CB}\label{CBsimples}
\begin{itemize}
\item
$\Rep^\alpha_{\Pi(Q)}$ contains simple representations if and only if
$\alpha$ is a positive root and $p(\alpha)> \sum_{1}^r p(\beta^i)$ for each decomposition of $\alpha=\beta^1+\dots +\beta^r$ into $r\ge2$ positive roots. 
\item
If $\Rep^\alpha_{\Pi(Q)}$ contains simple representations, then $\Rep^\alpha_{\Pi(Q)}$ is an irreducible variety of dimension
$2p(\alpha)+\alpha\cdot \alpha-1$ and the quotient variety $\Rep^\alpha_{\Pi(Q)}/\!\!/\GL_\alpha$
has dimension $2p(\alpha)$. 
\end{itemize}
\end{theorem}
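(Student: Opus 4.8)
\emph{Proof proposal.} The plan is to realise $\Rep^\alpha_{\Pi(Q)}$ as the zero fibre of a moment map and to read off its dimension and its smooth locus from the symplectic (and Calabi--Yau) structure, leaving the purely root-theoretic bookkeeping to reflection functors.

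First I would invoke the standard moment-map description of the preprojective algebra. Writing a point of $\Rep^\alpha_{\overline{Q}}=T^*\Rep^\alpha_Q$ as a pair $(x,x^*)$ with $x\in\Rep^\alpha_Q$ and $x^*\in\Rep^\alpha_{Q^{\op}}$, the map $\mu(x,x^*)=\bigl(\sum_{a\in Q_1}[x_a,x_{a^*}]\bigr)_{v\in Q_0}\in\prod_{v\in Q_0}\mathfrak{gl}_{\alpha_v}$ is the moment map for the symplectic $\GL_\alpha$-action, and $\Rep^\alpha_{\Pi(Q)}=\mu^{-1}(0)$. Its image lies in the codimension-one ``total trace zero'' subspace, so $\mu^{-1}(0)$ is cut out inside the smooth variety $\Rep^\alpha_{\overline{Q}}$ of dimension $2\sum_{a\in Q_1}\alpha_{h(a)}\alpha_{t(a)}$ by $\alpha\cdot\alpha-1$ equations; hence every irreducible component has dimension at least $2\sum_{a\in Q_1}\alpha_{h(a)}\alpha_{t(a)}-(\alpha\cdot\alpha-1)=2p(\alpha)+\alpha\cdot\alpha-1=:D$. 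For the reverse inequality at a simple representation $\rho$ I would use the moment-map identity that $\im(d\mu_\rho)$ is the annihilator of the Lie algebra of $\operatorname{Stab}_{\GL_\alpha}(\rho)$: by Schur's lemma this stabiliser is only the scalars, so $\rank(d\mu_\rho)=\alpha\cdot\alpha-1$ is maximal, $d\mu_\rho$ is onto the trace-zero subspace, and $\mu^{-1}(0)$ is smooth at $\rho$ of dimension exactly $D$. The same value also follows from the tangent-space formula $\dim T_\rho\Rep^\alpha_{\Pi(Q)}=\alpha\cdot\alpha+\dim\Ext^1_{\Pi(Q)}(\rho,\rho)-\dim\End_{\Pi(Q)}(\rho)$ once, using that $\Pi(Q)$ is $2$-CY (so $\dim\Ext^2_{\Pi(Q)}(\rho,\rho)=\dim\End_{\Pi(Q)}(\rho)=1$) and the Euler form of $\Pi(Q)$, one finds $\dim\Ext^1_{\Pi(Q)}(\rho,\rho)=2p(\alpha)$. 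Consequently, whenever it is nonempty the simple locus is smooth of pure dimension $D$, any component of $\Rep^\alpha_{\Pi(Q)}$ meeting it is reduced of that dimension, and dividing by the free action of $\GL_\alpha/k^*$ (orbits of dimension $\alpha\cdot\alpha-1$) gives quotient dimension $2p(\alpha)$.

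To finish I would stratify $\mu^{-1}(0)=\bigsqcup_\tau\mathfrak{R}_\tau$ by \emph{representation type} $\tau=\{(\beta^1,m_1),\dots,(\beta^r,m_r)\}$, recording the dimension vectors and multiplicities of the Jordan--H\"older factors, and bound $\dim\mathfrak{R}_\tau$ from the $\GL_\alpha$-orbit geometry together with the inductive input that, for each $\beta^i$, the simple locus has its expected dimension $2p(\beta^i)+\beta^i\cdot\beta^i-1$. The estimate to aim for is $\dim\mathfrak{R}_\tau<D$ for every nontrivial $\tau$ \emph{exactly} when $p(\alpha)>\sum_i p(\beta^i)$ for all decompositions of $\alpha$ into $r\ge2$ positive roots (a ``bad'' decomposition instead produces a top-dimensional stratum of non-simple modules). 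Granting this, the bound $D$ from below forces every component of $\mu^{-1}(0)$ to meet the open simple locus, so the irreducible components of $\Rep^\alpha_{\Pi(Q)}$ correspond bijectively to the connected components of the simple locus; this yields both the irreducibility of $\Rep^\alpha_{\Pi(Q)}$ when $\alpha\in\Sigma_0$ and, by a parallel induction on $|\alpha|$, the non-existence of simple representations when $\alpha\notin\Sigma_0$, provided one also shows that any $\alpha\in\Sigma_0$ genuinely carries a simple module and that the simple locus is connected. These remaining points, together with the Morita-type reduction behind the stratum estimates, I would handle via reflection functors at loop-free vertices of $\overline{Q}$ — which identify the moment-map fibres for $\alpha$ and $s_i\alpha$ — reducing either to a simple root or to a dimension vector in the fundamental region, where the existence of a simple and the irreducibility of the fibre can be checked directly, and calling on Kac's theorem to match positive roots with indecomposable $kQ$-representations.

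The main obstacle is the sharp dimension estimate for the strata $\mathfrak{R}_\tau$: extracting the \emph{exact} equivalence with the $\Sigma_0$-inequality demands controlling the $\Ext^1$-spaces between simple $\Pi(Q)$-modules through the $2$-CY pairing and the form $p$, and tracking how these extension spaces sit inside the $\GL_\alpha$-orbits. A secondary difficulty is the connectedness of the simple locus — equivalently, the irreducibility of the quiver variety $\Rep^\alpha_{\Pi(Q)}/\!\!/\GL_\alpha$ — which does not follow formally from the dimension count and seems genuinely to require the reflection-functor reduction.
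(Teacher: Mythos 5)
This theorem is not proved in the paper at all: it is quoted verbatim from Crawley-Boevey \cite{CB} and used as a black box, so there is no internal argument to compare your proposal against. What you have written is, in outline, precisely the architecture of Crawley-Boevey's own proof of the cited result: the moment-map presentation $\mathrm{Rep}^\alpha_{\Pi(Q)}=\mu^{-1}(0)$, the lower bound $2p(\alpha)+\alpha\cdot\alpha-1$ on every component from cutting by $\alpha\cdot\alpha-1$ equations, surjectivity of $d\mu$ onto the trace-zero part at a simple point via triviality of the stabilizer modulo scalars, the stratification by representation type with the estimate that non-simple strata drop in dimension exactly under the condition $p(\alpha)>\sum_i p(\beta^i)$, and the reduction by reflection functors to the fundamental region together with Kac's theorem. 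As a self-contained proof, however, your text is incomplete at exactly the two points you flag yourself: the sharp dimension estimate for the strata $\mathfrak{R}_\tau$ (which occupies the bulk of \cite{CB} and requires genuine control of $\dim\Ext^1$ between simple $\Pi(Q)$-modules, not just the form $p$) and the irreducibility of the simple locus are stated as targets rather than established, so the ``if and only if'' in the first bullet and the irreducibility in the second are not yet proved. One further caution: your tangent-space route via the $2$-CY property of $\Pi(Q)$ is only available for non-Dynkin $Q$; the statement as quoted covers Dynkin quivers too (where only the coordinate vectors carry simples), so that case needs a separate, elementary treatment. In short, your proposal is a faithful roadmap to the external proof of a result the paper deliberately does not reprove, but it is a roadmap, not a proof.
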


We say that $\alpha\ge\beta ,\,$ if  $\alpha_v\ge \beta_v \; \forall v \in Q_0$.
In \cite{LB2} Le Bruyn observes the following interesting property of dimension vectors of simples. 

\begin{lemma}\label{Lebruynprop}
If $\alpha$ is the dimension vector of a simple representation of $\Pi(Q)$ then 
there is an extended Dynkin subquiver of $Q$ with imaginary root $\delta$
such that $\alpha\ge\delta$.
\end{lemma}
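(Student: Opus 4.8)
The plan is to combine Crawley--Boevey's characterisation (Theorem~\ref{CBsimples}) with the classical description of the minimal positive vectors whose Tits form is non-positive. First I would reduce to the case where $\alpha$ is sincere on a connected quiver. If $M$ is a simple $\Pi(Q)$-module with dimension vector $\alpha$, then $\mathrm{supp}(\alpha)$ is connected (otherwise $M$ decomposes), and replacing $Q$ by the full subquiver on $\mathrm{supp}(\alpha)$ changes neither the relevant representations (a representation of $\Pi(Q)$ supported on $\mathrm{supp}(\alpha)$ is the same, with the same submodule lattice, as a representation of the preprojective algebra of that subquiver) nor the class of extended Dynkin subquivers available. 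I would then peel off several easy configurations using that $\alpha$ is sincere: a loop at a vertex $v$ gives an $\tilde A_0$-subquiver with $\delta=\epsilon_v\le\alpha$; two vertices joined by $\ge2$ arrows give an $\tilde A_1$ with $\delta=(1,1)\le\alpha$; a cycle in the underlying graph through $v_1,\dots,v_m$ gives an $\tilde A_{m-1}$ with $\delta=(1,\dots,1)\le\alpha$. (The only leftover possibility, $\mathrm{supp}(\alpha)$ a single loopless vertex, is of type $A_1$ and carries no extended Dynkin subquiver; this does not arise in the cases of interest.) After this I may assume $Q$ is a connected loopless simple graph with at least two vertices and $\alpha$ is sincere.

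The first substantive step is to show that $\alpha$ is an \emph{imaginary} root, i.e.\ $p(\alpha)\ge1$. By Theorem~\ref{CBsimples}, $\alpha$ is a positive root; if it were real then $p(\alpha)=0$ and $\alpha\ne\epsilon_v$ (since $|\alpha|\ge2$). Because $\sum_v\alpha_v(\alpha,\epsilon_v)=(\alpha,\alpha)=2$ for the symmetric Euler form, some loopless vertex $v$ of the support has $(\alpha,\epsilon_v)>0$, and then $s_v(\alpha)=\alpha-(\alpha,\epsilon_v)\epsilon_v$ is again a positive root, so that
\[
\alpha=s_v(\alpha)+\underbrace{\epsilon_v+\dots+\epsilon_v}_{(\alpha,\epsilon_v)\text{ copies}}
\]
writes $\alpha$ as a sum of at least two positive roots. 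Since $p$ is Weyl-invariant and $p(\epsilon_v)=0$, the right-hand side has total $p$-value $p(s_v(\alpha))=p(\alpha)$, contradicting the strict inequality in Theorem~\ref{CBsimples}. Hence $p(\alpha)\ge1$.

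For the last step I would pick $\beta$ minimal among the finitely many dimension vectors with $0<\beta\le\alpha$ and $p(\beta)\ge1$ (the set is nonempty, as it contains $\alpha$). Its support is connected --- if it broke up into parts $S_1\sqcup S_2$ with no arrows between them, the identity $p(\beta)=p(\beta|_{S_1})+p(\beta|_{S_2})-1$ would force $p(\beta|_{S_i})\ge1$ for some $i$, contradicting minimality --- and minimality also forces $p(\gamma)\le0$ for all $\gamma$ with $0<\gamma<\beta$. Thus $\beta>0$ is a minimal vector with non-positive Tits form on its connected loopless simple support $R:=\mathrm{supp}(\beta)$, and by the classical classification of such vectors (equivalently, of critical unit forms) $R$ is an extended Dynkin subquiver of $Q$ with $\beta=\delta_R$. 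Since $\delta_R=\beta\le\alpha$, the lemma follows.

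The step I expect to be the main obstacle is this last structural input --- that a minimal positive vector with non-positive Tits form on a connected loopless simple graph is exactly the imaginary root $\delta_R$ of an extended Dynkin full subgraph --- which is also exactly why the preliminary reductions removing loops, multiple arrows and cycles cannot be skipped (for those configurations the statement genuinely fails). The other delicate point is excluding real roots in the second step: it uses that the Weyl reflection of a non-simple positive root is again a positive root, together with the precise form of the Crawley--Boevey inequality. The support reduction and the easy configurations are routine.
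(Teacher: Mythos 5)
Your proof is sound in substance, but note that the paper contains no proof of this lemma at all: it is quoted as an observation of Le Bruyn with a pointer to \cite{LB2}, so there is no internal argument to compare against and what you have produced is an independent reconstruction. The skeleton is the right one. The reductions (connected sincere support; loops, multiple arrows and cycles of the underlying graph give an $\tilde A_m$ with $\delta=(1,\dots,1)\le\alpha$ directly) are fine, and your reflection argument that $\alpha$ cannot be a non-elementary real root is exactly the intended use of the strict inequality in Theorem~\ref{CBsimples}: the decomposition $\alpha=s_v(\alpha)+(\alpha,\epsilon_v)\epsilon_v$ into at least two positive roots has total $p$-value $p(\alpha)$ by Weyl-invariance of $p$ and $p(\epsilon_v)=0$, so $p(\alpha)\ge 1$. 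The one external input is the final classification of minimal positive vectors $\beta$ with $q(\beta)\le 0$ (where $q=1-p$ is the Tits form) on a connected loopless simple graph; this is indeed a genuine classical fact (critical vectors of unit forms, Ovsienko/von H\"ohne, or Vinberg's and Happel--Preiser--Ringel's characterisation of affine diagrams by additive functions), and if you want the lemma self-contained it follows quickly from your minimality: if $(\beta,\epsilon_v)\ge 1$ for some $v$ in the support then $q(\beta-\epsilon_v)=q(\beta)-(\beta,\epsilon_v)+1\le 0$ with $0<\beta-\epsilon_v<\beta$, so $(\beta,\epsilon_v)\le 0$ throughout the support; a short further computation rules out a strictly negative value, and a connected graph carrying a sincere positive additive vector is extended Dynkin with $\beta\in\NN\delta$, whence $\beta=\delta$ by minimality. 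Finally, you are right to flag $\alpha=\epsilon_v$ at a loopless vertex: there the lemma as literally stated fails (a single loopless vertex contains no extended Dynkin subquiver), but this is an imprecision in the statement rather than a gap in your argument, since the paper only ever invokes the lemma away from this case (Remark~\ref{remdim} and Lemma~\ref{findcyclic}).
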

\begin{remark}\label{remdim}
Note that combined with Crawley-Boevey's result, this implies that
the quotient variety has dimension at least $4$ unless $Q$ is extended Dynkin.
Indeed if $\alpha \ne \delta$, we can use the 
elementary dimension vectors $\epsilon_v$ to make a decomposition in positive roots $\alpha = \delta+ \sum n_v \epsilon_v$ with $n_v = \alpha_v-\delta_v$ and we get
\[
 p(\alpha)> p(\delta) + \sum n_v p(\epsilon_v)\ge 1.
\]
\end{remark}

\newcommand{\Stab}{\mathrm{Stab}}
We will also need a local description of the quotient space of representations.
\begin{theorem}[Crawley-Boevey]\cite{CB2}\label{CBlocal}
If $\xi$ is a point in $\Rep^\alpha_{\Pi(Q)}$ corresponding to a semisimple representation with decomposition $S_1^{e_1}\oplus \dots \oplus S_k^{e_k},\,$ then there is a quiver $Q_L$ and
a $\Stab_\xi=\GL_\beta$-equivariant morphism $\kappa:\Rep^\beta_{\Pi(Q_L)}\to \Rep^\alpha_{\Pi(Q)}$ which maps $0$ to $\xi$. The corresponding quotient map
\[
\Rep^\beta_{\Pi(Q_L)}/\!\!/\GL_\beta \to \Rep^\alpha_{\Pi(Q)}/\!\!/\GL_\alpha
\]
is \'etale at $0$.
The vertices of $Q_L$ correspond to the simple factors in $\xi$ and the dimension vector $\beta$ assigns to each vertex the multiplicity of the corresponding simple.
\end{theorem}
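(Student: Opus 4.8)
The plan is to realise $\Rep^\alpha_{\Pi(Q)}$ as a moment-map fibre and then run Luna's étale slice theorem in its Hamiltonian form. Recall that $\Rep^\alpha_{\overline Q}$ is the cotangent space $T^{*}\Rep^\alpha_Q$ and carries a $\GL_\alpha$-invariant symplectic form; the $\GL_\alpha$-action is Hamiltonian with moment map $\mu(x)=\sum_{a\in Q_1}[x_a,x_{a^*}]$, taking values in $\bigoplus_{v\in Q_0}\mathfrak{gl}_{\alpha_v}$ once each Lie algebra is identified with its dual by the trace pairing. Since the defining relation of $\Pi(Q)$ is exactly $\sum_a[a,a^*]=0$, we have $\Rep^\alpha_{\Pi(Q)}=\mu^{-1}(0)$ as a $\GL_\alpha$-scheme. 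The semisimple point $\xi=S_1^{e_1}\oplus\cdots\oplus S_k^{e_k}$ has closed $\GL_\alpha$-orbit, and by Schur's lemma its stabiliser is $H:=\Stab_\xi\cong\prod_i\GL_{e_i}=\GL_\beta$ with $\beta=(e_1,\dots,e_k)$; as $\mu^{-1}(0)$ is closed and $\GL_\alpha$-stable in $V:=\Rep^\alpha_{\overline Q}$, the orbit is closed in $V$ as well.

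First I would apply Luna's slice theorem to the smooth affine $\GL_\alpha$-variety $V$ at $\xi$, which is legitimate because $\GL_\alpha$ and $H$ are reductive and the orbit is closed. In its symplectic refinement this produces the \emph{symplectic slice} $W:=\ker d\mu_\xi/(\mathfrak{gl}_\alpha\cdot\xi)$, an $H$-module carrying an induced symplectic form and an $H$-equivariant moment map $\mu_W:W\to\mathfrak h^*$, together with the assertion that $\mu^{-1}(0)/\!\!/\GL_\alpha$ is, étale-locally near $[\xi]$, modelled on $\mu_W^{-1}(0)/\!\!/H$. The tangent-orbit complex of Proposition \ref{tgrep} identifies $\ker d\mu_\xi=T_\xi\Rep^\alpha_{\Pi(Q)}$ and hence $W\cong\Ext^1_{\Pi(Q)}(\xi,\xi)$, and the symplectic form on $W$ is the Calabi-Yau pairing $\Ext^1_{\Pi(Q)}(\xi,\xi)\times\Ext^1_{\Pi(Q)}(\xi,\xi)\to\Ext^2_{\Pi(Q)}(\xi,\xi)\xrightarrow{\Tr}k$ supplied by the $2$-CY trace.

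The main obstacle is to identify $(W,\mu_W,H)$ with $(\Rep^\beta_{\overline{Q_L}},\mu_L,\GL_\beta)$ for a suitable quiver $Q_L$. Writing $\xi=\bigoplus_i S_i\otimes_k k^{e_i}$, one decomposes $\Ext^1_{\Pi(Q)}(\xi,\xi)=\bigoplus_{i,j}\Ext^1_{\Pi(Q)}(S_i,S_j)\otimes_k\Hom_k(k^{e_i},k^{e_j})$ as an $H$-module. Since $\Pi(Q)$ is $2$-CY, part (3) of the Calabi-Yau properties gives $\Ext^1_{\Pi(Q)}(S_i,S_j)\cong\Ext^1_{\Pi(Q)}(S_j,S_i)^*$, so the integers $\dim_k\Ext^1_{\Pi(Q)}(S_i,S_j)$ are the arrow multiplicities of a \emph{doubled} quiver $\overline{Q_L}$ on the vertex set $\{S_1,\dots,S_k\}$, exhibiting $W$ as the $\GL_\beta$-module $\Rep^\beta_{\overline{Q_L}}$. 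The crux is then to check that under this identification the Calabi-Yau symplectic form on $W$ corresponds to the standard cotangent symplectic form on $\Rep^\beta_{\overline{Q_L}}$ and that $\mu_W$ becomes the quiver moment map $\mu_L$; granting this, $\mu_W^{-1}(0)=\mu_L^{-1}(0)=\Rep^\beta_{\Pi(Q_L)}$, with vertices and dimension vector exactly as stated. This matching of the two symplectic reductions — carrying the Hamiltonian slice theorem through algebraically — is the technical heart of the argument.

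Finally I would assemble the morphism and the étale statement. Realising the symplectic slice as an $H$-invariant affine subspace $\xi+W\subseteq V$ transverse to the orbit, the identifications above present $(\mu|_{\xi+W})^{-1}(0)$ as $\Rep^\beta_{\Pi(Q_L)}$, and the inclusion into $\mu^{-1}(0)=\Rep^\alpha_{\Pi(Q)}$ is the desired $H$-equivariant morphism $\kappa$ with $\kappa(0)=\xi$. The Luna map $\GL_\alpha\times_H W\to V$, $(g,w)\mapsto g\cdot(\xi+w)$, is étale onto a saturated neighbourhood of the orbit and restricts to an étale map $\GL_\alpha\times_H\Rep^\beta_{\Pi(Q_L)}\to\Rep^\alpha_{\Pi(Q)}$; passing to $\GL$-quotients yields the induced map $\Rep^\beta_{\Pi(Q_L)}/\!\!/\GL_\beta\to\Rep^\alpha_{\Pi(Q)}/\!\!/\GL_\alpha$, étale at $0$, which completes the proof.
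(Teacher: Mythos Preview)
The paper does not give its own proof of this theorem: it is stated with attribution to Crawley--Boevey and a citation to \cite{CB2}, and is used as a black box. So there is no in-paper argument to compare against.

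That said, your outline is essentially the strategy of \cite{CB2}: interpret $\Rep^\alpha_{\Pi(Q)}$ as the zero fibre of the quiver moment map on the symplectic vector space $\Rep^\alpha_{\overline Q}$, apply Luna's slice theorem at the closed orbit of the semisimple point, and identify the symplectic slice with the representation space of the doubled local quiver. You correctly isolate the genuine content---matching the induced symplectic form and the slice moment map $\mu_W$ with the standard ones on $\Rep^\beta_{\overline{Q_L}}$---and honestly flag it with ``granting this''. That matching is exactly where Crawley--Boevey's argument does the work, and it is not automatic.

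One point to tighten in your last paragraph: the affine slice $\xi+W\subset V$ is transverse to the orbit in the ambient space, not contained in $\mu^{-1}(0)$, so the ``inclusion'' you describe does not directly give $\kappa$. What one actually has is that $(\xi+W)\cap\mu^{-1}(0)$ is, \'etale-locally near $\xi$, isomorphic to $\mu_W^{-1}(0)$ via the symplectic normal form, and only then does the quiver identification give $\mu_W^{-1}(0)\cong\Rep^\beta_{\Pi(Q_L)}$. The morphism $\kappa$ is the composite of these local isomorphisms with the inclusion of the slice, not a literal linear embedding. With that adjustment your sketch is a faithful summary of the cited proof.
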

\begin{remark}\label{rem:doublec}
Theorem \ref{CBlocal} means that if $\zeta'$ is a $\beta$-dimensional semisimple representation of $\Pi(Q')$ that is 'close enough' to the $0$, the corresponding
representation $\zeta'=\kappa(\zeta) \in \Rep^\alpha_{\Pi(Q)}$ is semisimple. The stabilizers of these two points are the same
so the decomposition in simples has the same structure. 
To determine the dimensions of the simples of $\zeta$ one can look at the centralizer of $\Stab_{\zeta}$ in $\GL_\alpha$:
\[
 C_{\GL_\alpha}\Stab_{\zeta} = \prod_i \GL_{\dim S_i}.
\]
The dimension of each simple in $\zeta$ must be at least the dimension of the corresponding simple in $\zeta'$, because $\Stab _{\zeta'}=\Stab _{\zeta}$ and 
$\GL_\beta\subset \GL_\alpha$ so $C_{\GL_\beta}\Stab_{\zeta'}\subset C_{\GL_\alpha}\Stab_{\zeta}$.
\end{remark}

\begin{lemma}\label{lem:dec}
If $M$ is a semisimple representation of $\Pi(Q)$ with decomposition $S_1^{e_1}\oplus \dots \oplus S_k^{e_k}$ then 
$M$ is cyclic if and only if $ e_i\le \dim S_i ,\, \forall i$.
\end{lemma}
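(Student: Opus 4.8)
The plan is to reduce the cyclicity of a semisimple module to a purely dimension-theoretic count, using the fact that for a semisimple module $M=S_1^{e_1}\oplus\dots\oplus S_k^{e_k}$ the endomorphism algebra is $\End_{\Pi(Q)}(M)\cong\prod_i\Mat_{e_i}(k)$, since each $S_i$ is absolutely simple over the algebraically closed field $k$ (Schur's lemma) and $\Hom(S_i,S_j)=0$ for $i\ne j$. A vector $v\in M$ is cyclic if and only if $\Pi(Q)v=M$, i.e. the submodule it generates is everything. First I would observe that a submodule $N\subseteq M$ of a semisimple module is again semisimple and decomposes as $N\cong\bigoplus_i S_i^{f_i}$ with $0\le f_i\le e_i$; so $v$ generates $M$ iff the submodule $\Pi(Q)v$ has the full multiplicity $e_i$ in each isotypic component.

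Next I would analyze the isotypic component $S_i^{e_i}\cong S_i\otimes_k k^{e_i}$ separately, since $M=\bigoplus_i(S_i\otimes k^{e_i})$ and $\Pi(Q)$ acts on each factor only through the first tensor slot. Writing $v=\sum_i s_{i,1}\otimes w_{i,1}+\dots$, more precisely decomposing the $S_i$-isotypic part of $v$ as an element of $S_i\otimes k^{e_i}$, the submodule generated by that part is $S_i\otimes W_i$ where $W_i\subseteq k^{e_i}$ is the subspace spanned by the ``coefficient vectors'' appearing when one writes the $S_i$-component of $v$ in terms of a $k$-basis of $S_i$. Concretely, if $d_i=\dim_k S_i$ and we expand the $S_i$-component of $v$ against a basis $s_{i,1},\dots,s_{i,d_i}$ of $S_i$, we get $v_i=\sum_{j=1}^{d_i}s_{i,j}\otimes w_{i,j}$ with $w_{i,j}\in k^{e_i}$, and then $\Pi(Q)v_i=S_i\otimes\langle w_{i,1},\dots,w_{i,d_i}\rangle$. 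Thus the $S_i$-component of $\Pi(Q)v$ is all of $S_i^{e_i}$ iff $w_{i,1},\dots,w_{i,d_i}$ span $k^{e_i}$, which forces $e_i\le d_i=\dim S_i$; and conversely, when $e_i\le\dim S_i$ for every $i$ one can choose the $w_{i,j}$ to span, and assemble a genuine cyclic vector. The reason the assembled $v$ works across all $i$ at once is exactly that the $S_i$ are pairwise non-isomorphic, so there is no interference: the submodule generated by $v=\sum_i v_i$ is the direct sum of the submodules generated by the $v_i$.

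Putting this together gives both directions: if $M$ is cyclic then the argument above forces $e_i\le\dim S_i$ for all $i$; if $e_i\le\dim S_i$ for all $i$, picking, for each $i$, vectors $w_{i,1},\dots,w_{i,d_i}$ whose first $e_i$ members form a basis of $k^{e_i}$ and setting $v_i=\sum_j s_{i,j}\otimes w_{i,j}$, the element $v=\sum_i v_i$ generates $M$. The one point that requires a little care — and is the main (mild) obstacle — is verifying that $\Pi(Q)\cdot(s\otimes w)$ for $s$ in a fixed basis vector of $S_i$ and a single $w$ really fills out $S_i\otimes\langle w\rangle$ and nothing more, i.e. that the action is ``diagonal'' on $S_i\otimes k^{e_i}$ and that distinct basis vectors $s_{i,j}$ contribute independent coefficient directions; this is the standard identification of submodules of $S_i^{e_i}$ with subspaces of $k^{e_i}$ via $N\mapsto\Hom_{\Pi(Q)}(S_i,N)$, and it is where absolute simplicity of $S_i$ over $k$ is used.
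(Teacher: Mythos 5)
Your proof is correct and follows essentially the same route as the paper: reduce to the isotypic components (the paper does this via the central idempotents of the image $\oplus_i\Mat_{\dim S_i}(k)$, you via the projections, both resting on density/absolute simplicity), then observe that $S_i^{e_i}$ is cyclic iff $e_i\le\dim S_i$, with the same dimension count for necessity and the same explicit cyclic vector $b_1\oplus\dots\oplus b_{e_i}$ for sufficiency. No substantive difference.
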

\begin{proof}
Because $M$ is semisimple, the map $\rho_M: A \to \End_k(M)$ factorizes through the surjective map $A \to \oplus_i \Mat_{\dim S_i}(k)$.
Using the idempotents $\id_{e_i}$ we can split up every cyclic vector $v$ into cyclic vectors $\id_{e_i}v$ for $\id_{e_i}M$ and vice versa. Looking at the summands separately, 
this reduces the problem to showing that the $\Mat_{d}(k)$-representation $(k^{d})^{\oplus e}$ is cyclic if and only if $e\le d$.
This condition is clearly necessary as otherwise $\dim_k \Mat_{d}(k)< \dim_k (k^{d})^{\oplus e}$. It is also sufficient because we can take
$b_1\oplus \dots \oplus b_e$ where $(b_i)_{1\le i\le d}$ is the standard basis of $k^{d}$. 
\end{proof}

\begin{lemma}\label{findcyclic}
If  $ \, \Rep^\alpha_{\Pi(Q)}$ contains simple representations and $\alpha \ne (1),$ then there exists a cyclic $\alpha$-dimensional representation $M$ with
$\End_A(M)\ne k$.

If, after deleting the zero vertices, $Q$ is not extended Dynkin of type $D_n$ or $E_n$ then one can choose this representation to be semisimple.
\end{lemma}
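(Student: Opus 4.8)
The plan is to produce, for a dimension vector $\alpha\ne(1)$ admitting simples, a cyclic representation $M$ that is not a brick, and then to refine the construction to make $M$ semisimple whenever $Q$ (after deleting the zero vertices of $\alpha$) is not extended Dynkin of type $D_n$ or $E_n$. First I would reduce to the support of $\alpha$: deleting the vertices $v$ with $\alpha_v=0$ does not change $\Rep^\alpha_{\Pi(Q)}$, so we may assume $\alpha_v\ge 1$ for all $v$ and $Q$ is connected. Now split into the case where $Q$ is extended Dynkin with imaginary root $\delta$ and the case where it is not.

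In the non-extended-Dynkin case, by Lemma \ref{Lebruynprop} there is an extended Dynkin subquiver with imaginary root $\delta$ and $\alpha\ge\delta$, and by Remark \ref{remdim} we have $p(\alpha)>p(\delta)+\sum_v n_v p(\epsilon_v)\ge 1$ for the decomposition $\alpha=\delta+\sum_v n_v\epsilon_v$. The idea is to build a \emph{semisimple} representation $M=S_\delta\oplus\bigoplus_v S_v^{\,n_v}$ where $S_\delta$ is a simple of dimension vector $\delta$ (these exist, e.g.\ the generic semisimple $\Pi(Q)$-representations over an extended Dynkin quiver) and $S_v$ is the one-dimensional simple at a loopless vertex $v$ — if $\alpha$ has no loopless vertex in its support one argues slightly differently, using imaginary $\epsilon_v$'s and Lemma \ref{lem:dec} directly. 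The key point is to choose the decomposition so that Lemma \ref{lem:dec}'s cyclicity criterion $e_i\le\dim S_i$ holds for every simple summand: the multiplicity of $S_\delta$ is $1\le|\delta|=\dim S_\delta$, and each $S_v$ has multiplicity $n_v$, so I need $n_v\le 1$, i.e.\ $\alpha_v-\delta_v\le 1$. This will not hold in general, so the honest construction is to decompose $\alpha-\delta$ more cleverly as a sum of dimension vectors of simples $T_j$ of $\Pi(Q)$ with multiplicity $\le\dim T_j$; one can always do this because whenever a multiplicity would exceed the dimension one replaces $T_j^{\,\oplus m}$ by a simple of dimension vector $mT_j$ (such a simple exists since $mT_j$ is again a root with $p(mT_j)>\sum p(T_j)$ once $m$ is in the imaginary range, or one uses a different root). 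The resulting $M$ is cyclic by Lemma \ref{lem:dec} and has $\End_A(M)\ne k$ since it is a nontrivial direct sum.

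When $Q$ \emph{is} extended Dynkin, one cannot always find a semisimple cyclic $M$ with $\End\ne k$ (this is exactly the $D_n,E_n$ obstruction), so I only need a cyclic, non-brick representation, not necessarily semisimple. Here I would take $\alpha=m\delta$ (the only roots are multiples of $\delta$ up to real roots) and construct $M$ as a non-split self-extension of a $\delta$-dimensional simple, or more simply $M=S_\delta\oplus N$ where $N$ is any cyclic representation of the remaining dimension vector — using that over extended Dynkin quivers $\Rep^\delta_{\Pi(Q)}$ has a one-parameter family of non-isomorphic simples, so for $\alpha=m\delta$ one can take $S_1\oplus\cdots\oplus S_m$ with distinct $\delta$-dimensional simples $S_i$; each appears with multiplicity $1\le|\delta|$, so Lemma \ref{lem:dec} gives cyclicity, and $\End_A(M)=k^m\ne k$. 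For the type $A_n$ extended Dynkin case one checks the one-dimensional simples at the $\widetilde A$-cycle give enough room; it is precisely for $\widetilde D_n,\widetilde E_n$ that $\delta$ has an entry $>1$ at a loopless vertex forcing multiplicity problems, which is why the semisimple conclusion is excluded there.

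The main obstacle I anticipate is the bookkeeping in the first case: making the decomposition $\alpha=\sum \beta^i$ into roots of $\Pi(Q)$ so that simultaneously (i) each $\beta^i$ is the dimension vector of an actual simple of $\Pi(Q)$ — which requires checking the Crawley-Boevey inequality $p(\beta^i)>\sum p(\gamma^j)$ for every sub-decomposition — and (ii) the multiplicities in the corresponding semisimple $M$ satisfy the $e_i\le\dim S_i$ bound of Lemma \ref{lem:dec}. The strict inequality $p(\alpha)>p(\delta)+\sum n_v p(\epsilon_v)$ from Remark \ref{remdim} gives the slack needed, but turning that slack into a valid decomposition-with-bounded-multiplicities, and separately handling the degenerate situations where $\alpha$'s support has loops at every vertex or where $\alpha-\delta$ is not expressible with small multiplicities, is where the real work lies; the extended Dynkin case, by contrast, is essentially explicit once one knows the root system and the pencil of $\delta$-dimensional simples.
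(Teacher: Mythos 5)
Your reduction to sincere $\alpha$ and the case split (extended Dynkin versus wild) match the paper, and Lemma \ref{lem:dec} is indeed the right cyclicity criterion, but both branches of your argument have genuine gaps. In the wild case, the decomposition-with-bounded-multiplicities is exactly the hard point, and your proposed fix does not work: when a multiplicity $m>1$ occurs at a loopless vertex $v$, you suggest replacing $S_v^{\oplus m}$ by a simple of dimension vector $m\epsilon_v$, but $p(m\epsilon_v)=1-m^2<0$, so $m\epsilon_v$ is not even a root and no such simple exists; nor are there $m$ distinct one-dimensional simples at a loopless vertex to use instead. You correctly flag this as ``where the real work lies,'' but that work is the lemma. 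The paper avoids the direct decomposition entirely: it picks the semisimple $\rho=S_\delta\oplus\bigoplus_v S_v^{\alpha_v-\delta_v}$, passes to the local quiver $(Q_L,\beta)$ via Crawley-Boevey's \'etale local structure (Theorem \ref{CBlocal}), checks that $(Q_L,\beta)$ is again wild with $|\beta|<|\alpha|$, finds a semisimple cyclic representation there by induction, and lifts it back; Remark \ref{rem:doublec} guarantees the simple factors only grow in dimension, so Lemma \ref{lem:dec} still applies upstairs. Without this (or some substitute) your first case is not a proof.

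The extended Dynkin case is also off. By Crawley-Boevey's criterion, $m\delta$ with $m\ge 2$ never admits simples (one has $p(m\delta)=1<m=\sum p(\delta)$), so the only sincere $\alpha$ in the hypothesis is $\alpha=\delta$ itself; your construction $S_1\oplus\cdots\oplus S_m$ with distinct $\delta$-dimensional simples then degenerates to a single simple, which is a brick, and a non-split self-extension of a $\delta$-dimensional simple has dimension $2\delta\ne\delta$. For $\widetilde A_n$ the zero representation works (as you note), but for $\widetilde D_n$ and $\widetilde E_n$ one genuinely needs a non-semisimple cyclic representation of dimension $\delta$: the paper builds it explicitly by orienting arrows away from a dimension-one vertex, assigning maximal-rank maps except at terminal arrows (rank one less than the terminal dimension, creating a one-dimensional direct summand there so that $\End\supseteq k\oplus k$), and zero to the starred arrows. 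Your proposal contains no working construction for this subcase.
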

\begin{proof}
We assume that $\alpha$ is sincere in the sense that for all $v \in Q_0:\alpha_v\ne 0$, otherwise we can delete the vertices with $\alpha_v=0$.

We first do the one vertex case.
If $Q$ has $0$ or $1$ loop then the only dimension vector with simples is $(1)$.
If $Q$ has more than $1$ loop, then Crawley-Boevey's criterion implies that there are simple representations in every dimension vector.
For $\alpha=(n)$ we can take the direct sum of $n$ different $1$-dimensional simple representations, which is cyclic by Lemma \ref{lem:dec}. 

If $Q$ has more than one vertex we have to distinguish between three cases.
\begin{enumerate}
 \item $Q$ is Dynkin. In this case there are no dimension vectors with simple representations except the elementary ones. 
 \item $Q$ is extended Dynkin. In this case the only sincere dimension vector which has simple representations is the imaginary root.
In each case we can find a cyclic representation which is not indecomposable. 
If $Q=\tilde A_n$ then the zero representation is cyclic because the dimension vector only contains ones. 
In the other cases assume that $Q$ is oriented with arrows that move away from a chosen vertex with dimension $1$ as illustrated below in the case of $\tilde E_8$:
\[
\xy /r.07pc/:
\POS (0,0) *\cir<3pt>{}*+{\txt\tiny{2}} ="v1",
(20,0) *\cir<3pt>{}*+{\txt\tiny{4}} ="v2",
(40,0) *\cir<3pt>{}*+{\txt\tiny{6}} ="v3",
(60,0) *\cir<3pt>{}*+{\txt\tiny{5}} ="v4",
(80,0) *\cir<3pt>{}*+{\txt\tiny{4}} ="v5",
(100,0) *\cir<3pt>{}*+{\txt\tiny{3}} ="v6",
(120,0) *\cir<3pt>{}*+{\txt\tiny{2}} ="v8",
(140,0) *\cir<3pt>{}*+{\txt\tiny{1}} ="v9",
(40,20) *\cir<3pt>{}*+{\txt\tiny{3}} ="v0"
\POS"v1" \ar@{<-} "v2"
\POS"v2" \ar@{<-} "v3"
\POS"v3" \ar@{<-} "v4"
\POS"v4" \ar@{<-} "v5"
\POS"v5" \ar@{<-} "v6"
\POS"v6" \ar@{<-} "v8"
\POS"v3" \ar@{->} "v0"
\POS"v8" \ar@{<-} "v9"
\endxy.
\]
We pick a representation which assigns to each arrow in $Q$ a map of maximal rank except for terminal arrows, for which we take a matrix with rank equal to the terminal dimension $-1$.
To the starred arrows we assign zero maps. This is a cyclic representation of $\Pi(Q)$: 
If we look at the dimension of the tail of the incoming nonstarred arrow in any given vertex, we see that it is at least one less than the dimension of this 
vertex. Therefore we can find vectors $\vec w_v \in vk^\alpha$ such that in each vertex $v$, the vectors $\vec w_v$ and
$v\Pi(Q)_{\ge 1}v' \vec w_{v'}, v' \in Q_0$ generate $vk^\alpha$. The sum $\sum_{v \in Q_0} w_v$ is a cyclic vector for this representation, but
endomorphism ring of this representation contains $k\oplus k$ because there is a direct summand in the terminal vertex.
\item
$Q$ is wild, so by Remark \ref{remdim} $\dim \Rep^\alpha_{\Pi(Q)}/\!\!/\GL_\alpha\ge 4$.  
We work by induction on $|\alpha|=\sum_v \alpha_v$. If $\alpha$ only consists of ones then by Lemma \ref{lem:dec} the zero representation is semisimple and cyclic so we are done.

If $\alpha_v>1$ for some $v$, Lemma \ref{Lebruynprop} shows that we can always find a subquiver of extended Dynkin type (or a 1 vertex 1 loop quiver, which is essentially $\tilde A_0$) such that $\alpha$ is bigger
than the imaginary root $\delta$. We can find a semisimple representation in 
$\rho \in \Rep ^{\alpha} _Q$ 
which is the direct sum of a simple nonzero representation of the extended Dynkin subquiver, together with
elementary simples with multiplicity $\alpha_v -\delta_v$ for each vertex $v$. By Theorem \ref{CBlocal} there is a $\GL_\beta$-equivariant morphism  $\Rep^\beta_{\Pi(Q_L)}\to  \Rep^\alpha_{\Pi(Q)}$  that maps $0$ to $\rho$, which induces a morphism
$\Rep^\beta_{\Pi(Q_L)}/\!\!/\GL_\beta \to \Rep^\alpha_{\Pi(Q)}/\!\!/\GL_\alpha$ that is \'etale at zero.

This implies that the dimension of $\Rep^{\beta}_{\Pi(Q_L)}/\!\!/\GL_\beta$ is the same as the dimension of $\dim \Rep^\alpha_{\Pi(Q)}/\!\!/\GL_\alpha$ and also that $\Rep^{\beta}_{\Pi(Q_L)}$ contains simples: just lift a simple 'close enough' to $\rho$.
So $(Q_L,\beta)$ is again wild and $|\beta|<|\alpha|$.

By induction there is a semisimple cyclic representation $\xi \in \Rep_\beta \Pi(Q_L)$, which we can choose in the appropriate neighborhood of the zero representation because $\Pi(Q_L)$ is graded and hence $\Rep^\beta _{\Pi(Q_L)}$
has a $k^*$-action by scaling. Under the \'etale morphism, $\xi$ corresponds to a semisimple point $\rho' \in \Rep^\alpha _{\Pi(Q)}$ which has the same stabilizer. 

By Remark \ref{rem:doublec}, the dimensions of the simples in the decomposition
of $\rho'$ are not smaller than those in the decomposition of $\xi$.
Lemma \ref{lem:dec} now implies that the representation $\rho$ is also cyclic. \end{enumerate}
\end{proof}

\begin{theorem}
Let $\Pi(Q)$ be the preprojective algebra of a quiver $Q$ and let $\alpha$ be a dimension vector for which there exist simple representations.
Then $\mathrm{Hilb_{\Pi(Q)}^\alpha}$ is irreducible of dimension $1+2\sum_{a\in Q_1} \alpha_{h(a)}\alpha_{t(a)}+\sum_{v \in Q_0}(\alpha_v -2\alpha_v^2)$ 
and it is smooth if and only if $Q$ has one vertex and $\alpha=(1)$ (or equivalently all $\alpha$-dimensional representations are simple).
\end{theorem}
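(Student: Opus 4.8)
The plan is to pull the problem back to the representation scheme $\Rep^\alpha_{\Pi(Q)}$ and combine Crawley--Boevey's results with the tangent-space analysis of Section~\ref{locgeoran}. For the irreducibility and the dimension: since $\Rep^\alpha_{\Pi(Q)}$ contains simple representations, Theorem~\ref{CBsimples} gives that it is irreducible of dimension $2p(\alpha)+\alpha\cdot\alpha-1 = 1+2\sum_{a\in Q_1}\alpha_{h(a)}\alpha_{t(a)}-\alpha\cdot\alpha$. Simple modules are cyclic, so the cyclic locus --- the image of $\mathrm{U}^\alpha_{\Pi(Q)}$ under the forgetful map of Remark~\ref{imsmooth} --- is a nonempty open subset of $\Rep^\alpha_{\Pi(Q)}$; hence $\mathrm{U}^\alpha_{\Pi(Q)}$ is dense open in the irreducible scheme $\Rep^\alpha_{\Pi(Q)}\times\mathbb{A}^{\alpha}_k$, so it is irreducible of dimension $\dim\Rep^\alpha_{\Pi(Q)}+|\alpha|$. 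By Corollary~\ref{bunda}, $\mathrm{Hilb}^\alpha_{\Pi(Q)}=\mathrm{U}^\alpha_{\Pi(Q)}/\GL_\alpha$ is then irreducible of dimension $\dim\Rep^\alpha_{\Pi(Q)}+|\alpha|-\alpha\cdot\alpha = 1+2\sum_{a\in Q_1}\alpha_{h(a)}\alpha_{t(a)}+\sum_{v\in Q_0}(\alpha_v-2\alpha_v^2)$.

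For the smoothness assertion, Corollary~\ref{bunda} and Remark~\ref{imsmooth} reduce the question to deciding when the (open) cyclic locus of $\Rep^\alpha_{\Pi(Q)}$ is smooth. The heart of the matter is a tangent-space identity. Starting from $\dim_k T_M\Rep^\alpha_{\Pi(Q)}=\alpha\cdot\alpha+\dim_k\Ext^1_A(M,M)-\dim_k\End_A(M)$ (the dimension-vector form of (\ref{euler})), I combine the Euler form of the preprojective algebra,
\[
\dim_k\End_A(M)-\dim_k\Ext^1_A(M,M)+\dim_k\Ext^2_A(M,M)=2\,\alpha\cdot\alpha-2\sum_{a\in Q_1}\alpha_{h(a)}\alpha_{t(a)},
\]
with the Calabi--Yau equality $\dim_k\End_A(M)=\dim_k\Ext^2_A(M,M)$ (this is exactly the mechanism of Theorem~\ref{main1}, which applies to $\Pi(Q)$ by Proposition~\ref{finfreegr}). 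Eliminating $\Ext^1$ and $\Ext^2$ and comparing with the dimension found above yields
\[
\dim_k T_M\Rep^\alpha_{\Pi(Q)}=\dim\Rep^\alpha_{\Pi(Q)}-1+\dim_k\End_A(M).
\]
Thus $\Rep^\alpha_{\Pi(Q)}$ is smooth at $M$ precisely when $\End_A(M)=k$, i.e.\ when $M$ is a brick; in particular it is smooth at simple points, hence generically reduced. Since the cyclic locus is open, $\mathrm{Hilb}^\alpha_{\Pi(Q)}$ is smooth if and only if every cyclic $\alpha$-dimensional $\Pi(Q)$-module is a brick.

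It remains to check the two implications. If $Q$ has one vertex and $\alpha=(1)$, then the relation $\sum_{a\in Q_1}[a,a^*]$ vanishes identically on one-dimensional modules, so $\Rep^{(1)}_{\Pi(Q)}\cong\mathbb{A}^{2|Q_1|}_k$ is smooth and every module is simple; hence $\mathrm{Hilb}^{(1)}_{\Pi(Q)}$ is smooth and all representations are simple. Conversely, if $\alpha\ne(1)$, then Lemma~\ref{findcyclic} exhibits a cyclic $\alpha$-dimensional module $M$ with $\End_A(M)\ne k$; by the displayed identity $\Rep^\alpha_{\Pi(Q)}$ is singular at $M$, and as $M$ lies in the open cyclic locus, that locus --- hence $\mathrm{Hilb}^\alpha_{\Pi(Q)}$ --- fails to be smooth. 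Since such an $M$ is in particular not simple (over the algebraically closed field $k$ a simple module has endomorphism ring $k$), the requirement that every $\alpha$-dimensional representation be simple already forces $\alpha=(1)$, which gives the final equivalence.

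I expect the tangent-space identity to be the delicate step: it requires the Euler-form bookkeeping for $\Pi(Q)$ and the $2$-Calabi--Yau duality to be matched carefully, in the spirit of Theorem~\ref{main1}, and one should note that the same identity certifies that $\Rep^\alpha_{\Pi(Q)}$ is genuinely smooth --- not merely of expected dimension --- along its simple locus. The other ingredients, namely the non-brick cyclic module produced by Lemma~\ref{findcyclic} and the principal-bundle description in Corollary~\ref{bunda}, are already available.
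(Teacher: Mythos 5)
Your proposal is correct and follows essentially the same route as the paper: Crawley--Boevey's Theorem~\ref{CBsimples} plus the principal-bundle description for irreducibility and the dimension count, and Lemma~\ref{findcyclic} together with the $2$-CY tangent-space mechanism of Theorem~\ref{main1} for the singularity. The only difference is that you compute the locally constant term in Theorem~\ref{main1} explicitly for $\Pi(Q)$, obtaining $\dim_k T_M\mathrm{Rep}^\alpha_{\Pi(Q)}=\dim\mathrm{Rep}^\alpha_{\Pi(Q)}-1+\dim_k\End_A(M)$, which is a valid (and slightly more informative) refinement of the paper's argument.
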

\begin{proof}
From Theorem \ref{CBsimples} we know that in this case $\Rep^\alpha_{\Pi(Q)}$ is an irreducible variety with dimension 
$2p(\alpha) + \alpha\cdot \alpha-1$, where $p(\alpha)$ is the quadratic form we defined before.
Using the fact that $\mathrm{Hilb_{\Pi(Q)}^\alpha}$ is a quotient of an open subset of $\Rep^\alpha_{\Pi(Q)} \times k^\alpha$ with fibers of dimension $\alpha\cdot \alpha$, 
we arrive at the desired formula for the dimension.
Unless $\alpha=(1)$ the previous lemma shows that we can always find a cyclic representation $\rho$ with $\End(\rho)\ne k$. By Theorem \ref{main1} this representation will correspond to a singularity in the Hilbert scheme.
\end{proof}

The crucial element in the proof for preprojective algebras rests on the fact that one can describe the representation space around any semisimple point again as the 
representation space of a preprojective
algebra. If we want to generalize our result to other Calabi-Yau algebras, we need to find a similar description. This will be done in the final part of the paper.

\section{The local structure of representations spaces of $2$-Calabi-Yau algebras}\label{appendix}
\newcommand{\ideal}[1]{\mathfrak{#1}}
\newcommand{\cI}{\mathcal{I}}
\newcommand{\cB}{\mathcal{B}}
\newcommand{\cA}{\mathcal{A}}
\newcommand{\MC}{\mathrm{MC}}
\newcommand{\RHom}{\mathrm{RHom}}
\newcommand{\N}{\mathbb{N}}
\renewcommand{\C}{k}
\renewcommand{\l}{l}
\newcommand{\eps}{\epsilon}
\newcommand{\<}{\langle}
\renewcommand{\>}{\rangle}

In this section we explain how the local structure of the representation space of a $2$-Calabi-Yau algebra can be seen
as the representation space of a preprojective algebra. This result enables us to
show that the semisimple representations that correspond to smooth points in the representation space are precisely the simple points. Moreover we show that 
if a neighborhood of a semisimple contains simples and the dimension of the quotient space is bigger than $2,$ then we can also find non-simple semisimple cyclic representations. 
This implies that there is a singularity in the corresponding component of the Hilbert scheme.

The results described here follow from a combination of results by many authors. First we will explain the $A_\infty$-perspective
on deformation theory as developed by Kontsevich and Soibelman \cite{KS2,KS3} and apply it to representation theory. This point of view is also studied
by Segal \cite{segal}. Then we will use results by Van den Bergh on complete Calabi-Yau algebras in \cite{VdB2} to show that
locally $2$-CY algebras can be seen as completed preprojective algebras. This observation is a generalization of a result by Crawley-Boevey in \cite{CB2}.
It also allows us to classify the semisimple representations that correspond to smooth points in the representation space of a Calabi -Yau algebra.

\subsection{Deformation theory}

We are going to reformulate some concepts in geometric representation theory to the setting of deformation theory.
To do this we need to recall some basics about $A_\infty$-algebras from \cite{Keller} and \cite{KS2}.

Let $\k$ be a finite dimensional semisimple algebra over $\C$.
An $A_\infty$-algebra is a graded $\k$-bimodule $B$ equipped with a collection of products $(\mu_i)_{i\ge 1}$, which are $\k$-bimodule morphisms  of degree $2-i$ 
\[
\mu_i : \underbrace{B \otimes_\k \dots \otimes_\k B}_{\text{$i$ factors}} \to B 
\]  
subject to the relations\footnote{for the specific sign convention we refer to \cite{Keller}}
\[
[M_n]~~ \sum_{\substack{u+v+j=n}} \pm \mu_{u+v+1}( 1^{\otimes u} \otimes \mu_{j} \otimes 1^{\otimes v} )=0.
\]
Note that $\mu_1$ has degree $1$ and $[M_1]$ implies $\mu_1^2=0$, so $B$ has the structure of a complex. Moreover if $\mu_i=0$ for $i>2$ we get a dg-algebra, 
so $A_\infty$-algebras can be seen as generalizations of dg-algebras. If it is clear which product we are talking about we drop the index $i$.

Morphisms between two $A_\infty$-algebras $B$ and $C$ are defined as collections of $\k$-bimodule morphisms $(F_i)_{i\ge 1}$ of degree $1-i$
\[
F_i : B \otimes_\k \dots \otimes_\k B \to C 
\]  
subject to the relations
\[
 \sum_{u+v+j=n} \pm F_{u+v+1}( 1^{\otimes u} \otimes \mu_{j} \otimes 1^{\otimes v}) 
 +  \sum_{i_1+\dots+\textcolor{red}{i_l}=n} \pm \mu_l(F_{i_1} \otimes \dots \otimes \textcolor{red}{F_{i_l}})=0.
\]
The power of $A_\infty$-structures lies in the fact that they can be transported over quasi-isomorphisms between two complexes. If $B$ is an $A_\infty$-algebra, $C$ a complex
of $\k$-bimodules and $\phi:B \to C$ a quasi-isomophism then we can find an $A_\infty$-structure on $C$ and a quasi-$A_{\infty}$-isomorphism $F_\bullet:B\to C$ with $F_1=\phi$. 

An important result in the theory of $A_\infty$-algebras is the minimal model theorem \cite{KS2,KS3,Keller}: 
\begin{theorem}
Every $A_\infty$-algebra is $A_\infty$-isomorphic to the product of a minimal one (i.e. $\mu_1=0$) and
a contractible one (i.e. $\mu_{>1}=0$ and zero homology). Two $A_\infty$-algebras are quasi-isomorphic if they have isomorphic minimal factors. 
\end{theorem}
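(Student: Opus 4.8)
The plan is to establish the two assertions separately: the first — existence and essential uniqueness of a minimal model, realized moreover as a direct factor — via a splitting of the underlying complex followed by the homotopy transfer theorem, and the second as a formal corollary.

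First I would reduce matters to producing contraction data on the underlying complex $(B,\mu_1)$. Let $H := H^\bullet(B,\mu_1)$, regarded as a graded $\k$-bimodule with zero differential. Since $\k$ is finite dimensional semisimple, $\k^e = \k\otimes\k^{\op}$ is again semisimple, so every short exact sequence of $\k$-bimodules splits; choosing $\k$-bimodule complements one writes $B = \iota(H)\oplus\mathrm{im}\,\mu_1\oplus W$ with $\mu_1$ restricting to an isomorphism $W\xrightarrow{\sim}\mathrm{im}\,\mu_1$ and vanishing on the other two summands. This yields an inclusion $\iota:H\hookrightarrow B$, a projection $\pi:B\twoheadrightarrow H$, and a map $h:B\to B$ of degree $-1$ with $\pi\iota=\mathrm{id}_H$, $\mathrm{id}_B-\iota\pi=\mu_1 h+h\mu_1$, and side conditions $h^2=0$, $h\iota=0$, $\pi h=0$. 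In particular, as a complex $B$ is the direct sum of $(H,0)$ and the acyclic complex $N := \mathrm{im}\,\mu_1\oplus W$ with differential $\mu_1|_N$.

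Next I would run the homotopy transfer theorem along $(\iota,\pi,h)$: for each $n$ one defines $m_n:H^{\otimes n}\to H$ as the sum over planar rooted trees with leaves decorated by $\iota$, root decorated by $\pi$, internal edges decorated by $h$, and internal vertices of valence $j+1$ decorated by $\mu_j^B$ (signs as in \cite{Keller}); the relations $[M_n]$ for $m_\bullet$ follow from those for $\mu_\bullet^B$ together with the contraction identities, and $m_1=0$, so $(H,m_\bullet)$ is minimal. The same recipe produces $A_\infty$-morphisms $I_\bullet:H\to B$ and $P_\bullet:B\to H$ with $I_1=\iota$ and $P_1=\pi$. To reach the stated conclusion I would then assemble a single $A_\infty$-morphism $\Psi_\bullet:B\to(H,m_\bullet)\times(N,\mu_1|_N)$ with $\Psi_n=(P_n,q_n)$, where the $q_n:B^{\otimes n}\to N$ are built by induction on $n$: because $N$ carries only $\mu_1$, the $N$-component of the morphism identity $[M_n]$ for $\Psi$ has the form $\partial q_n = (\text{expression in }q_{<n}\text{ and }\mu_\bullet^B)$ for the differential $\partial$ induced by $\mu_1$, the right-hand side is a $\partial$-cocycle, and $\mathrm{Hom}(B^{\otimes n},N)$ is acyclic since $N$ is contractible, so $q_n$ exists. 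Starting from $q_1=\mathrm{id}_B-\iota\pi$, the linear term $\Psi_1=(\pi,\mathrm{id}_B-\iota\pi)$ is an isomorphism of graded $\k$-bimodules, and an $A_\infty$-morphism with invertible linear term admits an $A_\infty$-inverse (constructed degree by degree); hence $\Psi_\bullet$ is an $A_\infty$-isomorphism and $B\cong(H,m_\bullet)\times(N,\mu_1|_N)$, a product of a minimal $A_\infty$-algebra and a contractible one.

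For the last sentence of the theorem: the projection $(H,m_\bullet)\times(N,\mu_1|_N)\to(H,m_\bullet)$ is a quasi-$A_\infty$-isomorphism because its linear part is a quasi-isomorphism of complexes ($N$ being acyclic), so, combined with the decomposition above, $A_\infty$-algebras with $A_\infty$-isomorphic minimal factors are connected by quasi-$A_\infty$-isomorphisms; conversely a quasi-$A_\infty$-isomorphism $B\to B'$ transports to one between minimal models whose linear part is an isomorphism of graded bimodules — both being the common cohomology — hence an $A_\infty$-isomorphism by the invertibility criterion. The main obstacle is obtaining a genuine $A_\infty$-\emph{isomorphism} onto the product rather than a mere quasi-isomorphism: this forces one to run the inductive constructions of $\Psi_\bullet$ and of its inverse with care, using semisimplicity of $\k$ at each stage to split off the relevant $\mathrm{Hom}$-complexes, while the other tedious point is the sign bookkeeping in the tree formulas for $m_\bullet$, $I_\bullet$ and $P_\bullet$.
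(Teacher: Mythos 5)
The paper does not prove this theorem: it is quoted verbatim as the minimal model theorem of Kontsevich--Soibelman and Keller, with citations in place of an argument, so there is no in-paper proof to compare yours against. Your argument is the standard one from that literature and is correct in outline. The two load-bearing inputs are exactly right: semisimplicity of $\k^e$ lets you split $B=\iota(H)\oplus\im\,\mu_1\oplus W$ as $\k$-bimodules and build contraction data $(\iota,\pi,h)$ with the side conditions, and the homotopy transfer theorem then gives the minimal structure $m_\bullet$ on $H$ together with the morphisms $I_\bullet$, $P_\bullet$. Your upgrade from a quasi-isomorphism to a genuine $A_\infty$-isomorphism onto the product --- adjoining the component $q_\bullet:B\to N$ by induction, using that the target carries only $\mu_1$ so the obstruction lives in the acyclic complex $\Hom_{\k^e}(B^{\otimes n},N)$, and then invoking invertibility of $A_\infty$-morphisms with invertible linear term --- is the standard perturbation-style completion of the argument (Kontsevich--Soibelman instead phrase this as an inductive change of coordinates on the bar construction; the two are equivalent). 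Two small points you defer but should be aware of: the verification that the right-hand side $R_n(q_{<n})$ is actually a $\partial$-cocycle is where the $A_\infty$-relations for $B$ and the already-established morphism identities in degrees $<n$ must be combined, and it is the only place the induction could silently fail if the sign conventions for morphisms and for the product structure are mismatched; and the final sentence of the theorem as stated only claims the ``if'' direction, so your converse (transporting a quasi-isomorphism to the minimal models and applying the invertibility criterion) is extra, though it is the direction one actually needs later and is proved correctly.
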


Given an $A_\infty$-algebra $B$ we can define the Maurer-Cartan equation
\[
\mu(x) + \mu(x,x) + \mu(x,x,x) + \dots =0
\] 
The standard way to make sense of this equation is to demand that $x \in B_1\otimes \ideal m$,
where $\ideal m$ is the maximal ideal in $R=\C[t]/(t^n)$ (or some other local artinian commutative ring $R=\C\oplus \ideal m$) and 
to let $R$ commute with the $\mu_i$. The set of solutions will be denoted by $\MC(B)_{\ideal m}$ and as such 
$\MC(B)$ can be considered as a functor from local artinian rings to sets. \footnote{In fact it is a functor to groupoids, because one can integrate the $B_0\otimes\ideal m$-action on $\MC(B)_{\ideal m}$.}

\newcommand{\hatMC}{\widehat{\mathrm{MC}}}
\newcommand{\MCinv}{\widehat{\mathrm{MC}}^{\mathrm{inv}}}

If $B_0$ and $B_1$ are finite dimensional we can also make sense of this by looking at the local ring
\[
\hatMC(B) := \C[\![B_1^*]\!]/ \<\xi\mu^1 + \xi\mu^2 + \xi\mu^3 + \dots| \xi \in B_1^*\>
\]
where $\xi\mu^k$ is interpreted as the homogeneous polynomial function that maps $x\in B_1$ to $\xi(\mu_k(x,\dots,x))$. 
This ring can be seen as the complete local ring corresponding to the zero solution in the formal scheme of solutions
to the Maurer-Cartan equation.

$B_0$ has an infinitesimal action on $\hatMC$
\[
 b \cdot \xi := \sum_{i=0}^\infty \pm \xi\mu_b^i
\]
where $\xi\mu^k_b$ is interpreted as the element in $(B_1^*)^{\otimes {k-1}}$ that maps $x$ to 
$$\xi(\mu_k(b,\dots,x)\pm \dots \pm \mu_k(x,\dots,b)).$$ 
We denote the ring of invariants of this action by 
$$
\MCinv(B) := \{f \in \hatMC(B)| \forall b \in B_0: b\cdot f = 0\}
$$

If $F_\bullet:B \to C$ is an $A_\infty$-isomorphism then the map
\[
\phi_F: \hatMC(C) \to \hatMC(B): \xi \mapsto \sum_{i=0}^\infty \xi F^i
\]
is an isomophism which maps $\MCinv(C)$ to $\MCinv(B)$.

If an $A_\infty$-algebra is a product of two subalgebras, the set of solutions to the Maurer-Cartan equations for an $A_\infty$-algebra is the product of the set of solutions 
to the Maurer-Cartan equations of its two factors.
Likewise, the corresponding local ring is the completed tensor product of the local rings of the two factors and the invariant ring is the completed tensor product of 
the two invariant rings.

If $B$ is contractible then as vector spaces $B_0 \cong \ker \mu_1|_{B_1}$. 
As the higher products vanish $\hatMC(B)\cong\C[\![B_0^*]\!]$ and the invariant ring is $\MCinv(B)=\C$.
Combined with the minimal model theorem this implies that quasi-isomorphic $A_\infty$-algebras have isomorphic invariant rings.

\subsection{Representation spaces}

For $A=\C Q/J$ a path algebra of a quiver with relations, we can describe the space $\Rep^\alpha_A$ as a deformation problem.
Fix an $\alpha$-dimensional representation $\rho$ and construct the following complex $R^\bullet$:
\[
R^i = \Hom_{\k^e}(A\otimes_\k \dots  \otimes_\k A, \Mat_n(\C))
\]
with the following products
\begin{equation*}\begin{split}
\mu_1f (a_1,\dots, a_{i+1})&= \rho(a_1)f(a_2,\dots,a_{i+1})-f(a_1a_2,\dots,a_{i+1})+\dots\\
&~~\pm f(a_1,\dots,a_ia_{i+1})\mp f(a_1,\dots,a_i)\rho(a_{i+1})\\
\mu_2(f,g)(a_1,\dots, a_{i+j})&=f(a_1,\dots,a_i)g(a_{i+1},\dots,a_{i+j})
\end{split}\end{equation*}

The Maurer-Cartan equation for this algebra reduces to
finding $\k$-linear maps $f:A \to \Mat_n(\C) \otimes \ideal m$ for which
\[
\rho(a)f(b)-f(ab)+f(a)\rho(b) + f(a)f(b)=0, 
\]
which is precisely the condition that $\rho+f$ is a $\alpha$-dimensional representation. So the map 
$f \mapsto (\rho(a)+f(a))_{a\in Q_1}$
maps $MC(R)_{\ideal m}$ bijectively to the $\C\oplus \ideal m$-points that lie over the point $\rho \in \Rep_\alpha A$. 
In this way $R^\bullet$ captures the local information of the representation scheme $\Rep^\alpha_A$ around $\rho$.

In fact, we can interpret the complex $R^\bullet$ as
\[
\Hom_{A^e}(\cA_\bullet, M\otimes M^\vee)
\]
where $\cA_\bullet$ is the bar resolution of $A$ and $M$ is the $A$-module corresponding to the representation $\rho$. 
Therefore $R^\bullet$ is quasi-isomorphic to the complex $\Ext_A^\bullet(\rho,\rho)$ with a corresponding $A_\infty$-structure.
For more information on this we refer to \cite{Palmieri}.

\subsection{Koszul Duality}

In general if $A = \C Q/J$ and none of the relations $r_i$ contains paths of length $\le 1$, we can consider the zero representation corresponding to the module $\underline{\k}:=A/Q_1A$.

\begin{definition}
The Koszul dual of $A$ is 
\[
 A^! := \Ext_A^\bullet(\underline \k,\underline \k). 
\]
As explained in the previous section this space has the structure of an $A_\infty$-algebra over $\k$ coming from the isomorphism
\[
 \Ext_A^\bullet(\underline \k,\underline \k) \cong H(\Hom_{A^e}(\cA_\bullet, \underline \k \otimes \underline \k^\vee).
\]
The ordinary product in this $A_\infty$-structure is equal to the standard Yoneda product \cite{Palmieri}.
\end{definition}

Note that $\Ext^0_A(\underline \k,\underline \k)=\k$ and as a $\k$-bimodule $\Ext^1_A(\underline \k,\underline \k)$ is spanned by elements $[a]$ corresponding to the arrows while 
$\Ext^2(\underline \k,\underline \k)$ is spanned by elements $[r_i]$ corresponding to a minimal set of relations. The complete structure of the $A_\infty$-products can 
become very complicated but one has the following identity \cite{segal}
\begin{equation}\label{idmu}
\mu([a_1],\dots,[a_s])= \sum_i c_i [r_i]
\end{equation}
where $c_i$ is the coefficient of the path $a_1\dots a_k$ in $r_i$.
For every dimension vector $\alpha$ we also have a zero representation $\rho_0= \underline \k \otimes_\k \C^\alpha$ and in that case
\[
\Ext_A^\bullet(\rho_0,\rho_0) = \Ext_A^\bullet(\underline \k \otimes_\k \C^\alpha,\underline \k \otimes_\k \C^\alpha)= \C^\alpha \otimes_\k \Ext_A^\bullet(\underline \k,\underline \k) \otimes_\k \C^{\alpha}.
\]
If $\{b_i|i\in \cI\}$ is a graded $\k$-basis for $A^!$, then elements in $\Ext_A^\bullet(\rho_0,\rho_0)$ can be seen as 
linear combinations $\sum B_i b_i$ where $B_i$ is an $\alpha_{h(b_i)}\times \alpha_{t(b_i)}$-matrix.
The higher multiplications are matrix-versions of the original ones:
\[
\mu(B_1b_1,\dots ,B_ib_i) = B_1\dots B_i \mu(b_1,\dots,b_i).
\]
In combination with (\ref{idmu})
it is easy to see that, just as expected, $\sum A_i [a_i] \in  \Ext_A^1(\rho_0,\rho_0)\otimes \ideal m$ is a solution to
the Maurer-Cartan equation if and only if the matrices $A_i$ satisfy the relations.
From the point of view of local rings we see that 
\[
 \hatMC(\Ext_A^\bullet(\rho_0,\rho_0)) \cong \widehat{\C[\Rep^\alpha_A]}_{\rho_0}.
\]
It can also easily be checked that 
\[
 \MCinv(\Ext_A^\bullet(\rho_0,\rho_0)) \cong \widehat{\C[\Rep^\alpha_A]}_{\rho_0}^{\GL_\alpha}.
\]

Now we return to the general situation and look at a semisimple $\alpha$-dimensional representation $\rho$ with decomposition into simple representations $\rho=\sigma_1^{\oplus e_1}\oplus \dots \oplus \sigma_m^{\oplus e_m}$. We can rewrite
\[
 \Ext_A(\rho,\rho) = \bigoplus_{i,j=1}^m\bigoplus_{r=1}^{e_i}\bigoplus_{s=1}^{e_j} \Ext(\sigma_i,\sigma_j) = \C^\eps \otimes_{l} \Ext(\underline \rho,\underline \rho) \otimes_{l}  \C^\eps.
\]
In this notation $\underline\rho$ is the representation that contains one copy of each simple $\sigma _i$, $\l=\C^m$ is the semisimple algebra $\Ext^0_A(\underline \rho,\underline \rho)$ 
and $\C^{\eps}$ is the module over this algebra with dimension vector $\eps=(e_1,\dots,e_m)$.
If we can find an $\l$-algebra $B$ such that $B^!=\Ext_A^\bullet(\underline \rho,\underline \rho)$, then we can say that locally (up to a product with an affine space) 
the space of $\alpha$-dimensional representations of $A$ around $\rho$ looks like the space of $\eps$-dimensional representations of $B$ around the zero representation.  

How do we find $B$? Because $B^!=\Ext_A^\bullet(\underline \rho,\underline \rho)$, the algebra $B$ should be the Koszul dual of 
$E:=\Ext_A^\bullet(\underline \rho,\underline \rho)$, so we need to take a look at the construction of the Koszul dual of an $A_\infty$-algebra.
We restrict to the relevant case where $E=\l\oplus V$ is a finite dimensional augmented $\l$-algebra with an $A_\infty$-structure on $E$ such that $\mu_1(\l)=0$, $\mu_2$ is 
the ordinary multiplication and $\mu_n(\dots,\l,\dots)=0$ for all $n>2$. For this cases we will follow the construction in 
\cite[Appendix A]{VdB2}.

As is explained in \cite{VdB2} the Koszul dual of a finite dimensional algebra should be a complete algebra. 
First we construct the completed tensor-algebra $\widehat{T_{\l} V^*}$ with $V^*:=\Hom(V,\C)$.
Using a graded $\l$-basis $\cB$ for $V$, this algebra consist of all formal sums of words $b_1^*\otimes\dots \otimes b_s^*$
with $b_1,\dots,b_s \in \cB$. We give this algebra a grading by setting $\deg b^*=1-\deg b$ for all $b \in \cB$.

We turn this completed tensor-algebra into a dg-algebra by adding a differential. Using the Leibniz rule, linearity and completion, 
the differential is completely defined if we give expressions for $db^*$ with $b \in\cB$. 
We set
\[
 db^* = \sum_{s=1}^{\infty} \sum_{b_1,\dots,b_s \in \cB}  c_{b_1,\dots, b_s} b_1^*\otimes\dots \otimes b_s^*,
\]
where $c_{b_1,\dots, b_s}$ is the coefficient of $b$ in $\mu(b_1,\dots,b_s)$.
We will call the dg-algebra  $(\widehat{T_{\l} V^*},d)$ the Koszul dual of $(E,\mu)$ and denote it by $E^!$. 
Note that if $(E,\mu)$ is an ordinary algebra, $(\widehat{T_{\l} V^*},d)$ can be seen as the Koszul complex
used to calculate $\Ext_E(\underline \l,\underline \l)$, so $E^!$ is quasi-isomorphic to the classical Koszul dual of $E$.
If $E=\Ext_A^\bullet(\underline \k, \underline \k)=A^!$ then $E^!$ is formal and its homology is the completion of $A$ 
by path-length concentrated in degree $0$. In other words $\hat A$ is the minimal model of $E^!$ (see \cite[Proposition A.5.4]{VdB2}).

In general $E^!$ is not formal, but the degree zero part of its homology, $H_0(E^!)$,
is enough to construct the Maurer-Cartan equation for $E$.
Indeed, the Maurer-Cartan equation for $E$ only depends on $\mu_i|_{E_1^{\otimes i}}$. In $E^!$ these are encoded in 
the map  $d:E^!_{-1} \to E^!_0$. 

Note that because all degrees in $E=\Ext_A^\bullet(\underline{\rho},\underline{\rho})$ are nonnegative, the degrees in $E^!$ are nonpositive. 
The degree zero part of $E^!$ is the completed tensor algebra $\widehat{T_{\l} E_1^*}$, which can be seen as a completed path algebra of a quiver $Q_{loc}$ with $m$ vertices
and $\dim iE_1^*j=\dim \Ext_A^1(\sigma_i,\sigma_j)$ arrows from $i$ to $j$. This quiver is called the \emph{local quiver of $\rho$}.
$E^!_{-1} = \widehat{\C Q_{loc}} \otimes_l E_2^* \otimes_l \widehat{\C Q_{loc}}$ and the image of $d|_{E^!_{-1}}$ is the $\widehat{\C Q_{loc}}$-ideal generated by the $ds_i$ where the $s_i$  form an $l$-basis for $E_2^*.\,$  
Hence, $H_0(E^!)$ is the completed path algebra of the quiver $Q_{loc}$ with relations $ds_i$ and 
\[
 \Ext_{H_0(E^!)}^i(l,l)= E_i \text{ for $i\le 2$ and } {\mu_n}\big|_{\left(\Ext_{H_0(E^!)}^1(l,l)\right)^{\otimes n}}=\mu_n|_{E_1^{\otimes n}}.
\]

This allows us to conclude
\begin{theorem}\label{lq}
Let $A$ be a finitely presented algebra. If $\rho$ is an $\alpha$-dimensional semisimple representation of $A$
with decomposition $\rho=\sigma_1^{\oplus e_1}\oplus \dots \oplus \sigma_m^{\oplus e_m}$ then 
the local structure of the representation space around $\rho$ is the same (up to a product with an affine space) as the local structure of the representation space
around the $\eps$-dimensional zero representation of $H_0(E^!)$ with $E= \Ext_A^\bullet(\underline \rho, \underline \rho)$ and $\eps = (e_1,\dots,e_n)$. 

If $H_0(E^!)$ is the completion of a path algebra with relations $L$ we can write
\[
 {\widehat{\C[\Rep^\alpha_A]}}_{\rho}\cong    {\widehat{\C[\Rep^\eps_L]}}_{\rho_0}\otimes \C[\![X_1,\dots,X_s]\!]
 \text{ and }
 ({\widehat{\C[\Rep^\alpha_A]^{\GL_\alpha}}})_{\rho}\cong  ({\widehat{\C[\Rep^\eps_L]^{\GL_\eps}}})_{\rho_0}.
\]
\end{theorem}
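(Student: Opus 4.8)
The plan is to assemble Theorem \ref{lq} from the pieces of deformation theory already developed, by tracking the Maurer-Cartan functor through a chain of quasi-isomorphisms and Koszul-duality identifications. The key invariants are the local ring $\hatMC(B)$ and its equivariant cousin $\MCinv(B)$, which we have seen are unchanged (up to completed tensor with a power series ring $\C[\![B_0^*]\!]$ coming from a contractible factor, and up to isomorphism) under $A_\infty$-isomorphism and under passing to minimal models.

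\emph{Step 1: translate the local structure of $\Rep^\alpha_A$ at $\rho$ into $A_\infty$-language.} As recalled in the subsection on representation spaces, the complex $R^\bullet$ with products $\mu_1,\mu_2$ satisfies $\hatMC(R^\bullet)\cong\widehat{\C[\Rep^\alpha_A]}_\rho$ and $\MCinv(R^\bullet)\cong\widehat{\C[\Rep^\alpha_A]}^{\GL_\alpha}_\rho$. Since $R^\bullet = \Hom_{A^e}(\cA_\bullet, M\otimes M^\vee)$ is quasi-isomorphic to $\Ext_A^\bullet(\rho,\rho)$ with its transported $A_\infty$-structure, the minimal model theorem gives an $A_\infty$-isomorphism between $R^\bullet$ and $E_\rho := \Ext_A^\bullet(\rho,\rho)$ times a contractible algebra. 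Hence $\hatMC(R^\bullet)\cong \hatMC(E_\rho)\,\widehat\otimes\,\C[\![X_1,\dots,X_s]\!]$ and $\MCinv(R^\bullet)\cong \MCinv(E_\rho)$, with $s = \dim (\text{contractible part})_0$.

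\emph{Step 2: rewrite $E_\rho$ over $\l=\C^m$ and identify it as a matrix-thickening of $E=\Ext_A^\bullet(\underline\rho,\underline\rho)$.} Using the decomposition $\rho = \sigma_1^{\oplus e_1}\oplus\cdots\oplus\sigma_m^{\oplus e_m}$ we have $E_\rho = \C^\eps\otimes_\l E\otimes_\l\C^\eps$ exactly as in the "Koszul duality" subsection, and the higher products on $E_\rho$ are the matrix-versions of those on $E$. \emph{Step 3: build the Koszul dual $E^!=(\widehat{T_\l V^*},d)$ as in \cite[Appendix A]{VdB2}.} The key computation — already carried out in the excerpt — is that $H_0(E^!)$ is the completed path algebra of the local quiver $Q_{loc}$ (with $\dim\Ext^1_A(\sigma_i,\sigma_j)$ arrows $i\to j$) modulo the relations $ds_i$ coming from an $\l$-basis of $E_2^*$, and moreover $\Ext^{\le 2}_{H_0(E^!)}(\l,\l)=E_{\le 2}$ with matching $A_\infty$-products $\mu_n|_{E_1^{\otimes n}}$. \emph{Step 4: close the loop.} Applying Step 1 in reverse to the algebra $L=H_0(E^!)$ and its $\eps$-dimensional zero representation $\rho_0$, we get $\widehat{\C[\Rep^\eps_L]}_{\rho_0}\cong \hatMC(\Ext^\bullet_L(\rho_0,\rho_0))$; but since only $\mu_i|_{E_1^{\otimes i}}$ enters the Maurer-Cartan equation, and these agree for $L$ and for $A$ at $\rho$, the two formal schemes of Maurer-Cartan solutions coincide. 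Taking invariants gives the $\GL$-equivariant statement.

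\emph{The main obstacle} I anticipate is the careful bookkeeping of \emph{which} data of the $A_\infty$-structure actually governs $\hatMC$ and $\MCinv$ — namely only the restrictions $\mu_i|_{E_1^{\otimes i}}$, equivalently the map $d\colon E^!_{-1}\to E^!_0$ — and verifying that passing from $E$ to $H_0(E^!)$ and back does not alter this truncated data. One must check that the higher ($\le 2$) Ext-algebra with its partial $A_\infty$-products is a complete invariant of the local Maurer-Cartan geometry, which is really the content of \cite[Appendix A]{VdB2} applied in our augmented, degree-nonnegative setting; the finite-dimensionality of $\Ext^0,\Ext^1$ (automatic here since $A$ is finitely presented and the $\sigma_i$ finite-dimensional) is what makes $\hatMC$ well-defined. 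The contractible factor contributing the extra variables $X_1,\dots,X_s$ must be handled honestly: it is precisely the part of $R^\bullet$ killed on passing to $\Ext_A^\bullet(\rho,\rho)$, and by the remark on contractible algebras it contributes a free power series ring and trivial invariants, which is why the equivariant isomorphism has no extra variables.
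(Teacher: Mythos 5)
Your proposal is correct and follows essentially the same route as the paper: the paper does not give a self-contained proof of Theorem \ref{lq} but assembles it exactly as you do, from (i) the identification of the local ring of $\Rep^\alpha_A$ at $\rho$ with $\hatMC$ of the complex $R^\bullet\simeq \Ext^\bullet_A(\rho,\rho)$ via the minimal model theorem (the contractible factor giving the power series variables and trivial invariants), (ii) the rewriting $\Ext_A^\bullet(\rho,\rho)\cong \C^\eps\otimes_l E\otimes_l \C^\eps$, and (iii) the observation that the Maurer--Cartan data $\mu_i|_{E_1^{\otimes i}}$ is encoded in $d\colon E^!_{-1}\to E^!_0$ and reproduced by $\Ext^{\le 2}_{H_0(E^!)}(l,l)$. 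Your flagged obstacle (that only the truncated $A_\infty$-data governs $\hatMC$ and $\MCinv$) is precisely the point the paper relies on from \cite[Appendix A]{VdB2}, so no essential step is missing.
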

\begin{remark}
The number $s$ equals the difference $\dim \GL_\alpha - \dim \GL_\eps = \alpha\cdot \alpha - \eps \cdot \eps$ and we can also identify
$\widehat{\C[\Rep^\eps_L]}_{\rho_0}\otimes \C[\![X_1,\dots,X_s]\!]$ with
\[
{\widehat{\C[\Rep^\eps_L \times_{\GL_\eps} \GL_\alpha]}}_{(\rho_0,1)} 
\]
\end{remark}
\begin{remark}
If $A$ is hereditary then $\Ext_A^{\ge 2}(\rho,\rho)=0$ and the Maurer-Cartan equation becomes trivial. The algebra $H_0(E^!)$ is equal to $E^!$ and is just the completed 
path algebra of the local quiver without any relations. Hence, locally the representation space of an hereditary algebra looks like the representation space of 
a quiver without relations. This result is an analogue of the local quiver theorem by Le Bruyn in \cite{LB}.
We will now have a look at generalizations of this result 
to the $2$-CY case.
\end{remark}

\subsection{Generalizations to $2$-Calabi-Yau algebras}

Suppose for now that $A$ is $2$-CY and $M$ is a semisimple $A$-module with $\End_A(M)=l=\C^m$. In this case $\Ext^1_A(M,M)$ has a nondegenerate antisymmetric $l$-bilinear 
form $\<f,g\> := \Tr_M(fg)$ and hence we can find a symplectic $l$-basis of the form $\{[a_i],[a_i]^*|i\in\cI\}$ such that $\<[a_i],[a_j]\>=0$, $\<[a_i]^*,[a_j]^*\>=0$ and 
$\<[a_i],[a_j]^*\>=\delta_{ij}$. Similarly $\Ext^0(M,M)$ is dual to $\Ext^2(M,M)$ so each 'vertex' $[v] \in l$ has a dual element $[v^*]$ and we have
$[a_i][a_i^*]=[v^*]$ and $[a_i^*][a_i]=-[w^*]$ for some $v$ and $w$ which we can consider as the head and tail of $a_i$ in the local quiver.

If we take the Koszul dual of $\Ext^\bullet_A(M,M)$, it is the completed path algebra of the local quiver $Q_{loc}$ with an extra loop $v^*$ in every vertex $v$.
If we put $z = \sum_{v \in Q_0}v^*$ then we get
\[
 dz = \sum_{a \in Q_1} aa^*-a^*a + h.o.t.
\]
Following the same reasoning as in the proof of Theorem 11.2.1 in \cite{VdB2} one can show that, 
up to a change of variables, these higher order terms vanish. This implies that
\[
H_0(\Ext_A^\bullet(M,M)^!) \cong \widehat{\C Q_{loc}}/\<dz\> \cong \widehat{k Q_{loc}}/\<\sum_{a \in Q_1} aa^*-a^*a\>.
\]
This last algebra is the completed preprojective algebra, so in this case $L=\Pi(Q_L)$ for some quiver $Q_L$, for which $Q_{loc}$ is the double. 

To summarize
\begin{theorem}
If $A$ is a $2$-CY and $M$ is a semisimple $A$-module, then the algebra $H_0(\Ext_A^\bullet(M,M)^!)$ is isomorphic to a preprojective algebra.
\end{theorem}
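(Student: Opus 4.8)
The plan is to reduce the statement to the two ingredients already developed in this section: the computation of the Koszul dual of $E := \Ext_A^\bullet(M,M)$ as an $A_\infty$-algebra, and the fact that a clever change of variables kills the higher-order terms in the relation $dz$. First I would fix the semisimple module $M$ with $\End_A(M) = l = \C^m$ and record the $2$-Calabi-Yau data on $E$: the Serre duality isomorphisms $\Ext^i_A(M,M) \cong \Ext^{2-i}_A(M,M)^*$ together with the trace pairing $\langle f,g\rangle := \Tr_M(fg)$. Since $M$ is semisimple, $\Ext^0_A(M,M) = l$, and the pairing restricts to a nondegenerate antisymmetric $l$-bilinear form on $\Ext^1_A(M,M)$; I would invoke the existence of a symplectic $l$-basis $\{[a_i],[a_i]^*\}_{i\in\cI}$ and the dual basis $\{[v],[v^*]\}$ for the $\Ext^0/\Ext^2$ pairing exactly as stated in the paragraph preceding the theorem. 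This identifies the local quiver $Q_{loc}$ of $M$ as a double quiver: for each $a_i$ in a subquiver $Q_L$ there is a reverse arrow $a_i^*$, and the compatibility of the trace with the Yoneda product (Proposition in Section \ref{CYalg}, part 4) forces $[a_i][a_i^*] = [v^*]$ and $[a_i^*][a_i] = -[w^*]$.

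Next I would apply Theorem \ref{lq}: the algebra governing the local structure of $\Rep^\alpha_A$ near $\rho$ is $H_0(E^!)$, the completed path algebra of $Q_{loc}$ with relations obtained from $d$ applied to an $l$-basis of $E_2^*$. By the $\Ext^0$--$\Ext^2$ duality, $E_2^*$ is spanned by the loop elements $v^*$ indexed by the vertices, so the relations are $dv^*$ for $v \in Q_0$, equivalently the single relation $dz$ with $z := \sum_{v\in Q_0} v^*$. From the identity $\mu([a_1],\dots,[a_s]) = \sum_i c_i[r_i]$ (equation \ref{idmu}) transported to this setting, and the fact that the quadratic part of the Yoneda product pairs $[a_i]$ with $[a_i^*]$ to give $[v^*]$, the leading term of $dz$ is precisely $\sum_{a\in Q_1}(aa^* - a^*a)$, with higher-order corrections coming from $\mu_{\ge 3}$. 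So modulo the higher-order terms, $H_0(E^!) \cong \widehat{\C Q_{loc}}/\langle \sum_{a\in Q_1}(aa^* - a^*a)\rangle$, which is the completed preprojective algebra $\widehat{\Pi(Q_L)}$.

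The main obstacle — and the step I would handle by citing Van den Bergh — is showing that the higher-order terms can be absorbed by an automorphism of $\widehat{\C Q_{loc}}$ fixing the vertices. This is exactly the content of (the proof of) Theorem 11.2.1 in \cite{VdB2}: there one uses the symplectic structure and a formal Darboux-type / Moser-type argument to straighten the potential so that the cyclic derivative with respect to $z$ becomes the standard preprojective relation. Concretely, one writes the $A_\infty$-structure as (the derivative of) a cyclic potential $W$ on $\widehat{\C Q_{loc}}$ whose quadratic part is $\sum_i a_i a_i^* z$ (so that $\partial_z W$ recovers $dz$), and then finds a filtered change of coordinates transforming $W$ into $\sum_i a_i a_i^* z$ exactly, at which point $\partial_z$ of it is $\sum_{a}(aa^*-a^*a)$. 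I would state that this argument applies verbatim here because the only inputs are the $2$-CY (hence cyclic $A_\infty$, hence potential) structure on $E$ and the nondegeneracy of the pairing, both of which we have.

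Finally I would conclude: since $\widehat{\Pi(Q_L)}$ is, up to completion, the preprojective algebra of the quiver $Q_L$ whose double is $Q_{loc}$, we obtain $H_0(\Ext_A^\bullet(M,M)^!) \cong \widehat{\Pi(Q_L)}$, which proves the theorem. One small point worth a remark rather than a full argument: the identification should be read in the completed/pro-Artinian category in which Theorem \ref{lq} lives, so "isomorphic to a preprojective algebra" means isomorphic to the completion $\widehat{\Pi(Q_L)}$ with respect to the path-length filtration; this is harmless for the application to $\Rep^\alpha_A$ and to the Hilbert scheme, since only the completed local rings $\widehat{\C[\Rep^\alpha_A]}_\rho$ and their $\GL$-invariants intervene there.
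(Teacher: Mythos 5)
Your proposal is correct and follows essentially the same route as the paper: the symplectic $l$-basis on $\Ext^1$ from the Calabi--Yau trace pairing, the identification of $Q_{loc}$ as a double quiver with the extra loops $v^*$ dual to $\Ext^0$, the single relation $dz=\sum_{a}aa^*-a^*a+\text{h.o.t.}$ with $z=\sum_v v^*$, and the appeal to the proof of Theorem 11.2.1 in \cite{VdB2} to remove the higher-order terms by a change of variables. Your closing remark that the isomorphism lands in the \emph{completed} preprojective algebra is a point the paper passes over quickly, and it is stated correctly.
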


This means that locally representation spaces of $2$-CY algebras look like preprojective algebras around the zero representation.
This result can be seen as a generalization of Theorem \ref{CBlocal}.

To solve the question which semisimple representations are smooth, we need to classify the local quivers and dimension vectors 
for which the zero representation of the preprojective algebra is smooth. Note that by construction such a dimension vector is sincere, i.e. $\forall v \in Q_0:\alpha_v\ne 0$.
\begin{theorem}
The only quivers and sincere dimension vectors for which $\Rep^\alpha_\Pi$ is smooth at the zero representation are
disjoint unions of quivers with one vertex and an arbitrary number of loops and dimension vector $1$, or quivers with one vertex and no loops and arbitrary dimension vector. 
\end{theorem}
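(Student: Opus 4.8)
The plan is to compute the dimension of the tangent space $T_0 \Rep^\alpha_\Pi$ at the zero representation and compare it with the dimension of $\Rep^\alpha_\Pi$ itself, then argue that equality forces $Q$ to have a very restricted shape. Write $\Pi = \Pi(Q)$. For the zero representation $\rho_0$ corresponding to the module $\underline\k \otimes_\k \C^\alpha$, we have $\Ext^1_\Pi(\rho_0,\rho_0) = \C^\alpha \otimes_\k \Ext^1_\Pi(\underline\k,\underline\k) \otimes_\k \C^\alpha$, and for a preprojective algebra $\Ext^1_\Pi(\underline\k,\underline\k)$ is spanned by the arrows of the double quiver $\overline Q$, so $\dim_k \Ext^1_\Pi(\rho_0,\rho_0) = 2\sum_{a\in Q_1}\alpha_{h(a)}\alpha_{t(a)} + 2(\text{number of loops contribution})$; more precisely, counting each arrow $a:i\to j$ and its dual $a^*:j\to i$, one gets $\dim_k \Ext^1 = 2\sum_{a\in Q_1}\alpha_{h(a)}\alpha_{t(a)}$. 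Since $\Ext^0_\Pi(\rho_0,\rho_0) = \End_\k(\C^\alpha) = \bigoplus_v \Mat_{\alpha_v}(k)$ has dimension $\alpha\cdot\alpha$, formula (\ref{euler}) gives $\dim_k T_{\rho_0}\Rep^\alpha_\Pi = n^2 + \dim_k\Ext^1 - \alpha\cdot\alpha = 2\sum_{a\in Q_1}\alpha_{h(a)}\alpha_{t(a)}$.

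Next I would invoke Theorem \ref{CBsimples}: when $\Rep^\alpha_\Pi$ contains simples (which by sincerity and Crawley-Boevey's criterion is the generic expectation for the local quivers we care about, but in general we only know $\Rep^\alpha_\Pi$ is of pure dimension $\ge 2p(\alpha)+\alpha\cdot\alpha-1$ through $\rho_0$ since $\Pi$ is $2$-CY — this lower bound on every component follows from the Calabi-Yau property and is the relevant inequality), we have $\dim \Rep^\alpha_\Pi = 2p(\alpha)+\alpha\cdot\alpha - 1 = 1 - 2\alpha\cdot\alpha + 2\sum_{a\in Q_1}\alpha_{h(a)}\alpha_{t(a)} + \alpha\cdot\alpha = 2\sum_{a\in Q_1}\alpha_{h(a)}\alpha_{t(a)} + 1 - \alpha\cdot\alpha$. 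Smoothness at $\rho_0$ means $\dim_k T_{\rho_0}\Rep^\alpha_\Pi = \dim \Rep^\alpha_\Pi$, i.e. $2\sum_{a\in Q_1}\alpha_{h(a)}\alpha_{t(a)} = 2\sum_{a\in Q_1}\alpha_{h(a)}\alpha_{t(a)} + 1 - \alpha\cdot\alpha$, which rearranges to $\alpha\cdot\alpha = 1$, forcing $\alpha = \epsilon_v$ for a single vertex $v$ with $\alpha_v=1$ — but since $\alpha$ is sincere this means $Q$ has exactly one vertex. This already handles the bulk of the statement, except that when $\Rep^\alpha_\Pi$ does not contain simples one must argue directly with the $2$-CY dimension bound rather than Crawley-Boevey's dimension formula; the point is that for a $2$-CY algebra every irreducible component of $\Rep^\alpha$ through $\rho_0$ has dimension at least $\alpha\cdot\alpha + \dim_k\Ext^1 - 2\dim_k\Ext^0 + \dim_k\Ext^0$-type bound coming from $h^0 = h^2$, and combining this with the tangent space computation pins down $\alpha\cdot\alpha - h^0$ appropriately.

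For the one-vertex case I would then separate loops: with $Q$ having one vertex and $\ell$ loops and dimension vector $(d)$, the preprojective relation is $\sum_{i=1}^\ell [x_i,x_i^*] = 0$, a single $\mathrm{Mat}_d$-valued equation of trace zero, so $\Rep^{(d)}_\Pi \subset \Mat_d(k)^{2\ell}$ is cut out by $d^2-1$ equations and $\dim_k T_{\rho_0} = 2\ell d^2$. When $\ell = 0$, $\Pi = k$ and $\Rep^{(d)}$ is a point, smooth, for every $d$ — giving the second family. When $\ell \ge 1$, smoothness at $\rho_0$ forces $2\ell d^2 = \dim\Rep^{(d)}_\Pi$; using $p((d)) = 1 + (\ell-1)d^2$ and $\dim\Rep^{(d)}_\Pi = 2p((d)) + d^2 - 1 = 2\ell d^2 - d^2 + 1$, equality $2\ell d^2 = 2\ell d^2 - d^2 + 1$ gives $d^2 = 1$, so $d = 1$ — and then for $d=1$ the relation $\sum[x_i,x_i^*]=0$ is vacuous and $\Rep^{(1)}_\Pi \cong \A^{2\ell}$ is smooth for any $\ell$, giving the first family. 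The main obstacle I anticipate is the bookkeeping in the case where $\Rep^\alpha_\Pi$ lacks simple representations: there I cannot quote Theorem \ref{CBsimples}'s dimension formula and must instead extract the needed dimension estimate purely from the $2$-CY condition (the equality $h^0_M = h^2_M$ and the resulting lower bound on $\dim\Rep^\alpha$), checking carefully that it still forces $\alpha\cdot\alpha = 1$ at a smooth point.
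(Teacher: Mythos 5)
Your tangent space computation is correct and agrees with the paper: at the zero representation the derivative of the preprojective relation vanishes, so $T_{\rho_0}\mathrm{Rep}^\alpha_{\Pi(Q)}$ is the whole ambient space $\mathrm{Rep}^\alpha_{\overline{Q}}$, of dimension $N=2\sum_{a\in Q_1}\alpha_{h(a)}\alpha_{t(a)}$. The gap is in how you convert this into non-smoothness. Your main branch quotes Crawley-Boevey's dimension formula $2p(\alpha)+\alpha\cdot\alpha-1$, which is only valid when $\mathrm{Rep}^\alpha_{\Pi(Q)}$ contains simple representations; the theorem, however, must be proved for \emph{all} quivers and sincere dimension vectors, and the problematic cases are precisely those without simples (every Dynkin quiver with sincere $\alpha$, extended Dynkin quivers with $\alpha\neq\delta$, the one-loop quiver with $d>1$ where you again invoke the formula outside its hypotheses, etc.). Your proposed fallback --- a lower bound on component dimensions coming from the $2$-CY property --- cannot close this: to contradict smoothness when $\dim T_{\rho_0}$ equals the ambient dimension you need an \emph{upper} bound (strictly less than $N$) on the local dimension, not a lower bound; and in any case $\Pi(Q)$ for $Q$ Dynkin is finite-dimensional and not $2$-CY, so the bound you want to cite is not even available there.

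The fix is already contained in your first computation and is exactly the paper's argument, which needs no case distinction and no dimension formula. Since $T_{\rho_0}\mathrm{Rep}^\alpha_{\Pi(Q)}=\mathrm{Rep}^\alpha_{\overline{Q}}$, smoothness at $\rho_0$ forces the local ring of $\mathrm{Rep}^\alpha_{\Pi(Q)}$ at $0$ to be regular of dimension $N$, hence the closed subscheme $\mathrm{Rep}^\alpha_{\Pi(Q)}\subseteq\mathrm{Rep}^\alpha_{\overline{Q}}\cong\mathbb{A}^N$ must coincide with the ambient affine space near $0$, i.e.\ the matrix-valued relation $\sum_{a}[\rho(a),\rho(a^*)]=0$ must vanish identically on $\mathrm{Rep}^\alpha_{\overline{Q}}$. (Reducing first to connected $Q$ by splitting the representation space as a product over components.) This identical vanishing happens exactly when $Q$ has no arrows, or when every arrow is a loop at a vertex where $\alpha_v=1$ so that all commutators are of $1\times 1$ matrices --- which, for connected $Q$ and sincere $\alpha$, gives precisely the two families in the statement. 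I would recommend replacing the Crawley-Boevey detour by this direct argument; as written, your proof establishes the result only on the locus where simples exist, plus the one-vertex case.
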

\begin{proof}
First note that if the quiver is a disjoint union of two subquivers, the preprojective algebra is the direct sum of two smaller preprojective algebras
and the representation space is the product of the corresponding representation spaces of these smaller algebras. So we can assume that $Q$ is connected.

The tangent space to the zero $\rho_0$ representation in $\Rep ^{\alpha} _{\Pi (Q)}$ is equal to $\Rep ^{\alpha} _Q$ because the derivative
$$
\sum [\rho(a),\rho_0(a^*)]+[\rho_0(a),\rho(a^*)]= \sum [\rho(a),0]+[0,\rho(a)]
$$
is identical to zero.
Therefore the zero representation is smooth if and only if $\Rep ^{\alpha} _{\Pi (Q)}=\Rep ^{\alpha} _Q$. This means that the relation $\sum [\rho(a),\rho(a^*)]=0$ must be identical to zero.
This only happens when all arrows are loops and the dimension in the vertex is $1$ or there are no arrows.
\end{proof}
\begin{corollary}
Let $A$ be a $2$-CY algebra and let $\rho \in \raa $ be semisimple. Then $\rho$ is smooth in $\raa$ if it is 
a direct sum of simples without extensions between them, where a simple can occur with higher multiplicity if it has no self-extensions.

If $\rho$ has simple representations in its neighborhood then $\rho$ itself must be simple. 
\end{corollary}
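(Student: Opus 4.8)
The plan is to deduce the corollary directly from the preceding theorem on preprojective algebras together with Theorem~\ref{lq} (the local structure result) and Theorem~\ref{main1}. First I would unwind the dictionary. Given a semisimple $\rho\in\raa$ with decomposition $\rho=\sigma_1^{\oplus e_1}\oplus\dots\oplus\sigma_m^{\oplus e_m}$, set $E=\Ext_A^\bullet(\underline\rho,\underline\rho)$ and $\eps=(e_1,\dots,e_m)$. By the $2$-CY hypothesis and the theorem just proved, $H_0(E^!)$ is a completed preprojective algebra $\widehat{\Pi(Q_L)}$, where the local quiver $Q_{loc}$ is the double of $Q_L$: it has a vertex for each simple factor $\sigma_i$, and $\dim\Ext^1_A(\sigma_i,\sigma_j)$ arrows from $i$ to $j$. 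Theorem~\ref{lq} then gives $\widehat{\C[\raa]}_\rho\cong\widehat{\C[\Rep^\eps_{\Pi(Q_L)}]}_{\rho_0}\otimes\C[\![X_1,\dots,X_s]\!]$, so $\rho$ is a smooth point of $\raa$ if and only if the $\eps$-dimensional zero representation is a smooth point of $\Rep^\eps_{\Pi(Q_L)}$.

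Next I would apply the theorem classifying when the zero representation of a preprojective algebra is smooth at a sincere dimension vector. Restricting to the connected components of $Q_L$, smoothness holds exactly when each component is either a single vertex with loops and $\eps$-value $1$, or a single vertex with no loops and arbitrary $\eps$-value. Translating back through the dictionary: a component that is an isolated vertex with no loops corresponds to a simple factor $\sigma_i$ with $\Ext^1_A(\sigma_i,\sigma_i)=0$ (no self-extensions) and no extensions to the other factors, occurring with arbitrary multiplicity $e_i$; a component that is a one-vertex-with-loops quiver with $\eps$-value $1$ corresponds to a simple factor with self-extensions but forced multiplicity $1$, again with no extensions to the other factors. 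The fact that the components are isolated vertices is precisely the statement that there are no extensions \emph{between} distinct simple summands. This yields the first assertion of the corollary.

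For the second assertion, suppose $\rho$ is smooth and has simple representations in every neighborhood. Then by the first part $\rho$ is a direct sum of simples with no extensions between them, and I claim $m=1$. Indeed, the local quiver $Q_{loc}$ is then a disjoint union of $m$ isolated-vertex quivers, so $\Rep^\eps_{\Pi(Q_L)}$ is a product of $m$ factors, and Theorem~\ref{lq} shows that $\widehat{\C[\raa]}_\rho$ factors (up to the affine-space term) as a corresponding product; but the existence of simples in a neighborhood means, via the étale-type identification of the quotient spaces in Theorem~\ref{lq} combined with Theorem~\ref{CBsimples}, that the local quiver $(Q_L,\eps)$ must itself admit simple representations of $\Pi(Q_L)$ near zero, and a disconnected quiver admits no simple representations unless it has a single vertex (a simple representation of a direct sum of algebras is supported on one factor). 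Hence $m=1$, $\eps=(e_1)$, and the one-vertex case forces either no loops with arbitrary $e_1$ or loops with $e_1=1$; in the no-loop case $\Rep^{(e_1)}_{kQ_L}$ is a point, so it contains a simple only if $e_1=1$, while the loop case already has $e_1=1$. Either way $\rho=\sigma_1$ is simple.

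The main obstacle I anticipate is the bookkeeping in the second paragraph: making the translation between ``components of $Q_L$ are isolated'' and ``no extensions between distinct simples'' fully precise, and in particular checking that a one-vertex-with-loops component genuinely has no arrows connecting it to other vertices of $Q_{loc}$ — this is exactly the antisymmetric-pairing argument ($\<[a_i],[a_j]^*\>=\delta_{ij}$) that built $Q_{loc}$ as a double, so it should come out cleanly, but it is the place where the $2$-CY structure is really used and deserves care.
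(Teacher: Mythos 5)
Your argument is correct and is exactly the route the paper intends: the paper states this as an immediate corollary of the preceding two theorems (the local model $H_0(\Ext_A^\bullet(M,M)^!)\cong\widehat{\Pi(Q_L)}$ together with the classification of quivers and sincere dimension vectors for which the zero representation of $\Pi(Q_L)$ is smooth), transported through Theorem~\ref{lq}, and your write-up simply makes that dictionary explicit. Your reading of the second sentence as carrying the smoothness hypothesis from the first is the correct one, and your argument for it (a disconnected or non-sincere-simple local datum cannot have simples near zero) matches the mechanism used in the proof of Theorem~\ref{main3}.
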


Finally we need to look at cyclic representations.
\begin{lemma}\label{pl}
Let $A$ be a $2$-CY algebra and let $\rho$ be a non-simple semisimple representation. If $X$ is a component of $\raa$ containing $\rho$ such that 
\begin{itemize}
\item  there are simples in $X$, and 
\item  $\dim X/\!\!/\GL_\alpha > 2 $,
\end{itemize}
then this component contains a cyclic non-simple semisimple representation.
\end{lemma}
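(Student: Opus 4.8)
The plan is to reduce the statement to the preprojective case treated in Lemma \ref{findcyclic} by means of the local structure theorem. Write $\rho=\sigma_1^{\oplus e_1}\oplus\cdots\oplus\sigma_m^{\oplus e_m}$, put $\eps=(e_1,\dots,e_m)$ and $E=\Ext_A^\bullet(\underline\rho,\underline\rho)$; since $\rho$ is not simple, $\eps\neq(1)$, and $\eps$ is sincere. By Theorem \ref{lq}, and because $L:=H_0(E^!)$ is a completed preprojective algebra $\Pi(Q_L)$, an \'etale neighbourhood of $\rho$ in $\raa$ is --- up to a product with an affine space --- identified with an \'etale neighbourhood of the zero representation $0$ of $\Rep^\eps_{\Pi(Q_L)}$, compatibly with the actions of $\GL_\alpha$ and $\GL_\eps$ and their point stabilisers. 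Since $\Rep^\eps_{\Pi(Q_L)}$ is irreducible whenever it carries simples (Theorem \ref{CBsimples}), this already shows $\raa$ has a unique irreducible component through $\rho$, namely $X$, so every point of $\raa$ close enough to $\rho$ lies on $X$.

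First I would transport the two hypotheses. Under the identification a semisimple point goes to a semisimple point with the same stabiliser, hence with the same number of simple summands and, by the argument of Remark \ref{rem:doublec}, with simple summands of at least the same dimensions; a simple point (stabiliser the scalars) goes to a simple point. The simple locus of $X$ is open and, by hypothesis, nonempty, hence dense in the irreducible $X$, so there are simples in $X$ arbitrarily close to $\rho$; therefore $\Rep^\eps_{\Pi(Q_L)}$ carries simples and, by Theorem \ref{CBsimples}, its quotient has dimension $2p(\eps)$. Comparing this with $\dim X/\!\!/\GL_\alpha>2$ through the isomorphism of invariant rings in Theorem \ref{lq} yields $p(\eps)\geq2$. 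Hence $Q_L$ is neither Dynkin (all its roots would then be real, forcing $p(\eps)=0$) nor extended Dynkin (a sincere dimension vector carrying simples on an extended Dynkin quiver is the imaginary root $\delta$, with $p(\delta)=1$; cf.\ Remark \ref{remdim}).

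Next I would invoke Lemma \ref{findcyclic} for $(\Pi(Q_L),\eps)$: as $\eps\neq(1)$ and $Q_L$ --- which has no zero vertices to delete --- is not extended Dynkin of type $D_n$ or $E_n$, there is a \emph{semisimple} cyclic representation $N=S_1^{\oplus f_1}\oplus\cdots\oplus S_r^{\oplus f_r}$ of $\Pi(Q_L)$ of dimension vector $\eps$ with $\End(N)\neq k$, and by Lemma \ref{lem:dec} its cyclicity means $f_i\leq\dim S_i$ for all $i$. Because $\Pi(Q_L)$ is positively graded, the scaling $k^*$-action moves $N$ into the neighbourhood of $0$ supplied by Theorem \ref{lq} without altering its orbit closure, its cyclicity or its endomorphism ring, so I may assume $N$ lies in that neighbourhood. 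Let $M$ be the representation of $A$ corresponding to $N$. Then $M$ lies on $X$, it is semisimple, and by the transport above it decomposes as $M=T_1^{\oplus f_1}\oplus\cdots\oplus T_r^{\oplus f_r}$ with $\dim T_i\geq\dim S_i\geq f_i$; hence $M$ is cyclic by Lemma \ref{lem:dec}. Finally $\End_A(M)=\prod_i\Mat_{f_i}(k)\cong\End(N)\neq k$, so $M$ is not simple, and $X$ contains a cyclic non-simple semisimple representation.

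The main obstacle is this last transport. Theorem \ref{lq} is an isomorphism of (invariant) complete local rings, whereas cyclicity is an open but genuinely global condition on a representation, so it does not pass through the identification on its own. The way around it, used above, is to carry over only the decomposition type of the semisimple point --- which is determined by the stabiliser and is therefore preserved --- together with the dimension inequality $\dim T_i\geq\dim S_i$ of Remark \ref{rem:doublec}, and then to re-derive the cyclicity of $M$ on the $A$-side directly from Lemma \ref{lem:dec}.
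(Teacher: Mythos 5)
Your proposal is correct and follows essentially the same route as the paper's own proof: reduce to the local preprojective algebra via Theorem \ref{lq}, transport simples to see that $\Rep^\eps_{\Pi(Q_L)}$ contains them, use $\dim X/\!\!/\GL_\alpha>2$ to exclude the extended Dynkin case, produce a semisimple cyclic representation by Lemma \ref{findcyclic}, rescale it into the \'etale neighbourhood, and conclude cyclicity of its counterpart on the $A$-side from Remark \ref{rem:doublec} and Lemma \ref{lem:dec}. Your explicit handling of the ``transport of cyclicity'' obstacle is exactly the mechanism the paper uses implicitly.
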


\begin{proof}
We will look at the representation space $\Rep^\eps _L/\!\!/\GL_\eps$, corresponding to the representation $\rho$.
Artin's approximation theorem applied to the isomorphism
${\widehat{\C[\Rep^\alpha_A]^{\GL_\alpha}}}_{\rho}\cong {\widehat{\C[\Rep^\eps_L]^{\GL_\eps}}}_{\rho_0}$
implies that there is a diagram of \'etale covers
$\Rep^\alpha_A/\!\!/\GL_\alpha \leftarrow U \to  \Rep^\eps_L/\!\!/{\GL_\eps}$. Pulling back and pushing forward we can find a semisimple representation $\tilde \sigma$ of $A$ for
every semisimple representation $\sigma$ that is close enough to the zero representation $\rho_0 \in \Rep^\eps_L/\!\!/{\GL_\eps}$. 
The representation $\tilde \sigma$ will be simple if and only if $\sigma$ is simple.

Because $A$ is $2$-CY the local algebra $L$ is a preprojective algebra and $\eps \ne 1$ because $\rho$ is semisimple but not simple.
Furthermore $\Rep^\eps _L/\!\!/\GL_\eps$ contains simples because $X$ does and $L$ is not the preprojective algebra of an extended Dynkin
because $\Rep^\eps _L/\!\!/\GL_\eps = \dim X/\!\!/\GL_\alpha > 2$.
By Lemma \ref{findcyclic} we can find a semisimple cyclic representation $\sigma$, corresponding to a non-smooth point in $\Rep^\eps _L/\!\!/\GL_\eps$. 
We can choose $\sigma$ in any neighborhood of the zero representation by rescaling.
By Remark \ref{rem:doublec}, the dimensions of the simple factors of its counterpart $\tilde \sigma \in \Rep^\alpha_A/\!\!/\GL_\alpha$ are at least those of $\sigma$, so
Lemma \ref{lem:dec} implies that $\tilde \sigma$ is cyclic.
\end{proof}

{\it Proof of Theorem \ref{main3}.}
If all representations in the component are simple, all local algebras are preprojective algebras over quivers with one vertex and with dimension vector $(1)$.
This implies that $\Rep^\eps _L/\!\!/\GL_\eps$ is smooth for all representations in the component and hence both the component and the Hilbert scheme are smooth.

If the component contains a non-simple semisimple representation, Lemma \ref{pl} implies that we can find a cyclic semisimple non-simple representation.
By Theorem \ref{lq} this representation corresponds to a non-smooth point in $\Rep^\alpha_A$ and because it is cyclic also to a non-smooth point in $\Hilb^\alpha_A$. \hfill $\square$

To illustrate this theorem we end with 3 examples.

\begin{example}
Let $A_g$ be the fundamental group algebra of a compact orientable surface with genus $g>1$. On this algebra we have an action of the 
group $G=\mathbb Z_2^{2g}$ which maps each generator $X_i,Y_j$ to $\pm X_i,\pm Y_i$. Because these transformations
leave the relation $\prod_i X_iY_iX_i^{-1}Y_i^{-1}-1$ invariant, the skew group ring $A_g\rtimes G$ will be $2$-CY.

It can be seen as the quotient of the path algebra of a quiver with $2g$ vertices. The vertices are connected to each other with
arrows coming from the $X_i,Y_i$. Because these are invertible, any representation of $A_g\rtimes G$ will have a dimension vector 
which assigns the same dimension to every vertex.

If $n=2g$ all vertices have dimension $1$ and all arrows must be represented by invertible numbers. This implies that the space $\Rep_n A_g\rtimes G/\!\!/\GL_n$ 
only has simple representations and its dimension is $2g$, so its Hilbert scheme is smooth.
If $n=2gm$ with $m>1$, there are nonsimple representations which are direct sums of simples with dimension $2g$, so these Hilbert schemes are not smooth.
\end{example}

\begin{example}
Let $\mathcal K$ be an affine part of a $2$-CY variety, such as the product of 2 elliptic curves, an abelian  or a K3 surface.
The coordinate ring $R=\C[\mathcal K]$ is a $2$-CY-algebra and by Corollary 3.6.6 of \cite{G2} it can be written as $D/\<w\>$ where $D$ is a formally smooth algebra. 
Furthermore there is a noncommutative symplectic form $\omega \in (\Omega^2 D)_{cyc}$ such that $dw = i_\delta \omega$ (where $\delta$ is the standard derivation $\delta(a)=1\otimes a-a\otimes 1$).

Now consider the free product of $m$ copies of $D$: $\tilde D = D_1*\dots *D_m$ and let $\tilde w=w_1+\dots+w_m$ be the sum of the correponding $m$ copies of $w$.
Again we have a noncommutative symplectic form $\tilde \omega=\omega_1+\dots +\omega_m$ and $d\tilde w = i_\delta \tilde\omega$.
Theorem 3.6.4 of \cite{G2} implies that $A= \tilde D/\<\tilde w\>$ is $2$-CY. The dimension of $\Rep_n A/\!\!/\GL_n$ is at least $2kn$ because for each $k$-tuple of points 
in $\mathcal K$ we can make a $1$-dimensional representation of $\tilde D$ that factors through $A$. All the $1$-dimensional representations are clearly simple so the Hilbert
scheme for $n=1$ is smooth. If $n>1$ the space $\Rep_n A$ contains nonsimple representations, so the Hilbert scheme of $A$ is only smooth for $n=1$.
\end{example}

\newcommand{\cC}{\mathcal{C}}
\newcommand{\Z}{\mathbb{Z}}
\begin{example}
The main ingredient in the proof of Theorem \ref{main3} is that all local quivers are wild and hence they have cyclic semisimple nonsimple representations.
Because of Lemma \ref{findcyclic}, the proof also works in the case that some of the local quivers are extended Dynkin of type $A_n$. 
As this is the only extended Dynkin quiver with root $(1,\dots,1)$, this means that if the component of $\rho \in \Rep_n A/\!\!/\GL_n$ is two-dimensional 
and has a nonsimple semisimple multiplicity-free representation, then it is also not smooth.

We can illustrate this with a variation on the McKay correspondence.
Consider the elliptic curve $\cC$ with coordinate ring $\C[X,Y]/(Y^2-X^3-1)$. The product $\cC\times \cC$ has coordinate ring
\[
R = \C[X_+,Y_+,X_-,Y_-]/(Y_+^2-X_+^3-1,Y_-^2-X_-^3-1)
\]
and on this ring we have an action of the group $\mathbb Z_6$ where the generator acts by
$X_\pm\mapsto e^{\pm 2\pi i/3}X_\pm$ and $Y_\pm\mapsto -Y_\pm$. Because this action preserves the volume form, the skew group ring $R\rtimes G$ is a $2$-CY algebra. 
The quotient $\Rep_n R\rtimes G/\!\!/\GL_{n}$ for $n=6$ will contain a component that is isomorphic to $\cC\times \cC/\!\!/G$.
Some points in $\cC\times \cC$ have a nontrivial stabilizer (e.g. $(0,-1,0,-1)$) and therefore there are representations of $R\rtimes G$ that are not
simple. If $\ideal p \lhd R$ is a point with a nontrivial stabilizer $G_p$ then the corresponding representation $\rho \in \Rep_n R\rtimes G/\!\!/\GL_{n}$ is 
$R/\ideal p\rtimes \C G$  and splits in $G_p$ different components parametrized by the characters of $G_p$. This implies that $\rho$ 
is multiplicity-free. From the discussion above we can conclude that the corresponding component of the Hilbert scheme is not smooth.
\end{example}

\bigskip
\centerline{Acknowledgement}
We would like to thank Corrado De Concini for sharing his ideas with us and for proposing us this interesting question.
We would also like to thank Claudio Procesi for many interesting discussions.
\bibliographystyle{amsplain}

\begin{thebibliography}{99}


\bibitem{Am} C. \ Amiot, \textit{Cluster categories for algebras of global dimension 2 and quivers with potential}, Ann. Inst. Fourier
\textbf{59}, no. 6, (2009), 2525-2590.

\bibitem{Am1} C. \ Amiot,  O. \ Iyama, I. \ Reiten 
 \textit{Stable categories of Cohen-Macaulay modules and cluster categories},
preprint arxiv:1104.3658v1.

\bibitem{AGV} A. \ Ardizzoni, F. \ Galluzzi and F. \ Vaccarino, \textit{A new family of algebras whose representation schemes are smooth},  Ann. Inst. Fourier (Grenoble), to appear. (arXiv:1307.4304) 

\bibitem{Ar} M.\ Artin, \textit{Brauer-Severi Varieties} in \textit{Brauer Groups in Ring Theory and Algebraic Geometry}. Lect. Notes Math. \textbf{917} (1982).


\bibitem{BerTa} R. \ Berger , R.  \ Taillefer, \textit {Poincar\'e-Birkhoff-Witt deformations of Calabi-Yau algebras}. J. Noncommut. Geom. \textbf{1} (2007), 241–270.


\bibitem{Bock} R. \ Bocklandt, Graded Calabi Yau algebras of dimension 3. \textit{Journal of Pure and Applied Algebra} 
\textbf{212} (2008), no. 1, 14-32.

\bibitem{Bock2} R. \ Bocklandt, \textit{Noncommutative Tangent Cones and Calabi Yau Algebras}, preprint arXiv:0711.0179.

\bibitem{LebBock}
Raf Bocklandt and Lieven Le~Bruyn, \emph{Necklace {L}ie algebras and
noncommutative symplectic geometry}, Math. Z. \textbf{240} (2002), no.~1,
  141--167. 
  
\bibitem{Bo} K. \ Bongartz, \textit{A geometric version of the Morita equivalence}.  J. Algebra  \textbf{139} (1991), 159-171.


\bibitem{BK} M. \ Brion , S.\  Kumar, \textit{ Frobenius splitting methods in geometry and representation theory}. Progress in Mathematics, 231.  Birkh�user, Boston (2005).

\bibitem{C-E} H. \ Cartan and S. \ Eilenberg, \textit{ Homological Algebra}. Princeton University Press (1956).

\bibitem{CB} W. \ Crawley-Boevey, \textit {Geometry of the moment map for representations of quivers}. Compositio Math. \textbf{126} (2001), no. 3, 257–293.

\bibitem{CB2} W. \ Crawley-Boevey, \textit {Normality of Marsden-Weinstein reductions for representations of quivers}, Math. Ann. \textbf{325} (2003), 55-79

\bibitem{Da} B. \ Davison \textit{Superpotential Algebras and Manifolds}, Adv. Math. \textbf{231}  (2012),no.1,  879-912.

\bibitem{De} P. \ Deligne, \textit {Cohomologie \'a support propres} in Th\'eories des Topos et Cohomologie \'Etale des Schemas, Tome 3. S\'eminaire de G\'eometrie Alg\'ebrique du Bois Marie 1963/64, SGA 4. Lecture Notes in Mathematics 305 , 250-480 (1973).


\bibitem{Fo} J. \ Fogarty, Algebraic families on an algebraic surface.\textit{Amer. Jour. of Math.} \textbf{90}  (1968), 511-521.

\bibitem{G-V} F. \ Galluzzi and F. \ Vaccarino, Hilbert-Chow Morphism for Non Commutative Hilbert Schemes and Moduli Spaces of Linear Representations. \textit{Algebras and Representation Theory}. \textbf{13} (2010), 491-509.


\bibitem{Ge-P} C. \ Geiss , J. A. \ de la Pena, \textit{On the Deformation Theory of Finite Dimensional Modules and Algebras}.  Manuscripta Math.  \textbf{88}  (1995),  191-208.

\bibitem{G1} V.\ Ginzburg, \textit{Lectures on Non Commutative Geometry}, preprint arXiv: 0612139v3.

\bibitem{G2}  V. \ Ginzburg, \textit{Calabi-Yau algebras}, preprint arXiv: math.AG/0612139.

\bibitem{Ia} T. \ Iarrobino, \textit{Hilbert scheme of points: Overviews of the last ten years in Algebraic Geometry}. Bowdoin 1985, PSPM 46 Part 2, Amer. Math. Soc.,Providence, (1987), 297-320 .


\bibitem{Iv} B. \ Iversen, \textit{Linear determinants with applications to the Picard Scheme of a family of algebraic curves}. Lecture Notes in Mathematics, \textbf{174}, Springer (1970).

\bibitem{IR} O. \ Iyama, I. \ Reiten, \textit{Fomin-Zelevinsky mutation and tilting modules over Calabi-Yau algebras}, preprint arXiv math.RT/0605136.

\bibitem{K} B. \ Keller, \textit{Deformed Calabi-Yau Completions}.  With an appendix by Michel Van den Bergh, Journal f{\"u}r die Reine und Angewandte Mathematik (Crelle's Journal), \textbf{654} (2011), 125-180.

\bibitem{Keller}
B.~Keller, \emph{Introduction to ${A}$-infinity algebras and modules}, Homology
  Homotopy Appl. \textbf{3} (2001), no.~1, 1--35. 
  
\bibitem{K-Re} B. \ Keller, I. \ Reiten, \textit{Acyclic Calabi-Yau Categories}, preprint arXiv :math/0610594 . 


\bibitem{KS2}
M.~Kontsevich and Y.~Soibelman, \emph{Notes on {$A_\infty$}-algebras,
  {$A_\infty$}-categories and non-commutative geometry}, Homological mirror
  symmetry, Lecture Notes in Phys., vol. 757, Springer, Berlin, 2009,
  pp.~153--219.

\bibitem{KS3}
M.~Kontsevich and Y.~Soibelman, \emph{Deformation theory}, \texttt{www.math.ksu.edu/\~}\texttt{soibel/Book-vol1.ps}

\bibitem{Le} L.\ Le Bruyn, Seelinger, G.: Fibers of generic Brauer-Severi schemes. \textit{J.\ Algebra} \textbf{214} (1982), 222-234 .


\bibitem{LB} L.\ Le Bruyn, \textit{Noncommutative Geometry and Cayley-smooth Orders}. Pure and Applied Mathematics  \textbf{290}, Chapman and Hall (2007).

\bibitem{LB2} L.\ Le Bruyn, \textit{Simple roots of deformed preprojective algebras}. preprint arXiv:{math.RA/0107027}

\bibitem{Palmieri}
D.-M. Lu, J.H. Palmieri, Q.-S. Wu, J.J. Zhang, \textit{A-infinity structure on Ext-algebras}, Journal of Pure and Applied Algebra, Volume 213, Issue 11, November 2009, 
2017-2037

\bibitem{Na} H. Nakajima, \textit{ Lectures on Hilbert schemes of points on surfaces}. University Lecture Series, 18, Amer. Math. Soc., Providence (1999).

\bibitem{No}  M.V. \ Nori, Appendix to the paper by C.\ S. \ Seshadri : \textit{Desingularisation of the moduli varieties of vector bundles over curves}. In: Proceedings of the International Symposium on Algebraic Geometry, Kyoto (1977).


\bibitem{Pr}  C. \ Procesi ,  \textit{Rings with Polynomial Identities} Lecture Notes in Pure and Appl. Math., Vol. 17, Dekker, New York, 1979.


\bibitem{RBC}  A.\ S.\ Rapinchuk, V.\ V.\ Benyash-Krivetz, V.\ I.\ Chernousov, \textit{Representation varieties of the fundamental
groups of compact orientable surfaces}, Israel J. Math. \textbf{93} (1996), 29-71.

\bibitem{Re} M.\ Reineke, \textit{Cohomology of non-commutative Hilbert schemes}. Algebr. Represent. Theory \textbf{8} (2005), 541-561.

\bibitem{segal}
E.~Segal, \emph{The {$A_\infty$} deformation theory of a point and the derived
  categories of local {C}alabi-{Y}aus}, J. Algebra \textbf{320} (2008), no.~8,
  3232--3268.

  \bibitem{V3} F. \ Vaccarino, \textit{Linear representations, symmetric products and the commuting scheme}. J. \ Algebra \textbf{317}  (2007), 634-641.


\bibitem{V} F. \ Vaccarino, \textit{Moduli of Linear Representations, Symmetric Products and the non Commutative Hilbert scheme} in \textit{Geometric Methods in Representation Theory, II} S\'eminaires et Congress, Grenoble \textbf{24} (2010), 435-456.

\bibitem{VdB} M. \ Van den Bergh, The Brauer-Severi scheme of the trace ring of generic matrices in \textit{Perspectives in Ring Theory, Antwerp, 1987}. NATO Adv. Sci. Inst. Ser. C Math. Phys. Sci., \textbf{233}, 333-338
Kluwer Acad. Publ., Dordrecht (1988).

\bibitem{VdB2}
M.~Van~den Bergh, \emph{Calabi-Yau algebras and superpotentials}, preprint arXiv: math.KT/1008.0599 

\end{thebibliography}

\bigskip

\begin{flushleft}
Raf Bocklandt\\
Korteweg de Vries institute\\
University of Amsterdam (UvA)\\
Science Park 904\\ 
1098 XH Amsterdam\\ 
The Netherlands\\
e-mail: \texttt{raf.bocklandt@gmail.com}\\[2ex]

Federica~Galluzzi\\
Dipartimento di Matematica\\
Universit\`a di Torino\\
Via Carlo Alberto n.10 ,Torino\\
10123, \ ITALIA \\
e-mail: \texttt{federica.galluzzi@unito.it}\\[2ex]

Francesco~Vaccarino\\
Dipartimento di Scienze Matematiche\\
Politecnico di Torino\\
C.so Duca degli Abruzzi n.24, Torino\\
 10129, \ ITALIA \\
e-mail: \texttt{francesco.vaccarino@polito.it}\\
and\\
ISI Foundation\\
Via Alassio 11/c\\
10126 Torino - Italy\\
e-mail: \texttt{vaccarino@isi.it}

\end{flushleft}

\end{document}